\Crefname{algocf}{Algorithm}{Algorithms}
\newcommand{\R}{\mathbb{R}}
\newcommand{\C}{\mathbb{C}}
\newcommand{\eps}{\varepsilon}
\DeclareMathOperator*{\SRHT}{SRHT}
\DeclareMathOperator*{\SJLT}{SJLT}
\DeclareMathOperator*{\Gaussian}{Gaussian}
\DeclareMathOperator*{\trace}{trace}
\DeclareMathOperator*{\range}{range}
\DeclareMathOperator*{\diagmat}{diag}
\DeclareMathOperator*{\Binomial}{Binomial}
\newcommand{\abs}[1]{{\lvert #1\rvert}}
\newcommand{\norm}[1]{{\left\| #1\right\|}}
\newcommand{\normsq}[1]{\norm{#1}^2}
\newcommand*{\horzbar}{\rule[.5ex]{2.5ex}{0.5pt}}
\newcommand{\inc}{{\textsc{Incrementing}}}
\pgfplotsset{width=10cm,compat=1.9}
\newcommand{\epsa}[0]{\varepsilon_{\textup{abs}}}
\newcommand{\epsr}[0]{\varepsilon_{\textup{rel}}}
\newcommand{\bze}{\mathbf{O}}
\newtheorem{remark}{Remark}
\newtheorem{lemma}{Lemma}
\newtheorem{theorem}{Theorem}
\newtheorem{definition}{Definition}
\begin{document}

\title{Construction of Hierarchically Semi-Separable matrix Representation using Adaptive Johnson--Lindenstrauss Sketching}

\author{
Yotam Yaniv\footnote{Corresponding author, email: yotamy@lbl.gov} \footnotemark[2], Pieter Ghysels~\footnote{Lawrence Berkeley National Laboratory}, Osman Asif Malik~\footnotemark[\value{footnote}],
Henry A. Boateng~\thanks{San Francisco State University},
Xiaoye S. Li~\footnotemark[2]
}

\maketitle

\begin{abstract}
 We present an extension of an adaptive, partially matrix-free, Hierarchically Semi-Separable (HSS) matrix construction algorithm by Gorman et al.\ [SIAM J.\ Sci.\ Comput.\ 41(5), 2019]
 which uses Gaussian sketching operators to a broader class of Johnson--Lindenstrauss (JL) sketching operators. We develop theoretical work which justifies this extension.
 In particular, we extend the earlier concentration bounds
 to all JL sketching operators and examine this bound for specific classes of such operators including the original Gaussian sketching operators, 
 subsampled randomized Hadamard transform (SRHT)
 and the sparse Johnson--Lindenstrauss transform (SJLT). 
 We discuss the implementation details of applying SJLT and SRHT efficiently. Then we demonstrate experimentally that using SJLT or SRHT instead of Gaussian sketching operators leads to up to 2.5$\times$ speedups of the serial HSS construction implementation in the STRUMPACK C++ library. Additionally, we discuss the implementation of a parallel distributed HSS construction that leverages Gaussian or SJLT sketching operators. We observe a performance improvement of up to 35$\times$ when using SJLT sketching operators over Gaussian sketching operators. 
 The generalized algorithm allows users to select their own JL sketching operators with theoretical lower bounds on the size of the operators which may lead to faster run time with similar HSS construction accuracy. \end{abstract}

\textbf{Keywords}: HSS matrix, Johnson-Lindenstrauss sketching, randomized sampling, adaptivity

\section{Introduction}

Many large dense matrices in engineering and data sciences are \emph{data-sparse} in that 
the off-diagonal blocks can be well approximated as low-rank submatrices.
Some examples are matrices from discretized integral equations, boundary element methods, and
kernel matrices used in statistical and machine
learning~\cite{Bebendorf08,KRR-ipdps20}.
There are many types of matrix formats that can take advantage of the
off-diagonal low-rank structure; these include, to name a few,
Hierarchically Semi-Separable  matrices (HSS)~\cite{chandrasekaran2005fast,chandrasekaran2006fast},
Hierarchical matrices ($\mathcal{H}$) and Hierarchical Bases $\mathcal{H}$-matrices ($\mathcal{H}^{2}$)~\cite{hac00,hac02}.
This work focuses on HSS representation and, more specifically,
efficient HSS compression, i.e., construction of the HSS format.
Compression is the central component of the HSS framework, and
usually dominates the total cost.
Once a matrix is compressed into its HSS form, one can develop
asymptotically faster algorithms for multiplication, factorization
and solve based on the HSS structure.
One way to speed up the HSS compression algorithm is to use 
randomization~\cite{martinsson2011fast,halko2011},
in particular, randomized sketching.
The main advantage of randomization is that these methods
usually require fewer floating point operations and less communication
than their traditional deterministic counterparts.
Moreover, they are often easier to parallelize.

Consider a matrix $A \in \mathbb{C}^{n \times n}$ to be compressed
as an HSS matrix that approximates $A$. Randomized sketching can be
considered as a preprocessing step that helps compute the column
spaces of various off-diagonal submatrices throughout the compression algorithm.
This preprocessing step is done by post-multiplying $A$ by a tall-and-skinny random matrix $R$ of size $n \times (r+p)$: $S \leftarrow A R$.
If $A$ is nonsymmetric, the row space must be computed separately which requires an additional preprocessing step of the form $S' \leftarrow A^* R$.
The coefficient $r$ is an upper bound
on the numerical ranks of the off-diagonal blocks and $p$ is an oversampling parameter, a small integer on the order
of 10 or so.
The entries of the $n \times (r+p)$ matrix $R$ are drawn from a certain probability distribution.
A common choice is to draw the entries of $R$ independently from an appropriately scaled normal distribution.
The cost of matrix multiplication
$A R$ is $O(n^2 d)$, where $d = r+p$ while the remaining cost of the compression algorithm is $O(nr^2)$, therefore this upfront matrix multiplication is often the bottleneck in the entire compression algorithm.

This paper builds upon our previous 
work~\cite{ghysels2017-ipdps,gorman2019robust}.
The first motivation is to mitigate the $O(n^2 d)$ cost in the sketching step.
To this end, we study alternative random sketching operators,
with a focus on the sparse Johnson--Lindenstrauss transform (SJLT) 
and the subsampled randomized Hadamard
transform (SRHT) \cite{AilonChazelle06, KaneNelson14}.
SJLT and SRHT are asymptotically faster to apply than Gaussian sketching operators, but research is needed to understand whether they provide desired approximation quality, and what the time and accuracy trade offs are.
Secondly, one of the highlights of~\cite{gorman2019robust} is the development
of a new stopping criterion for adaptive sketching, which is needed because the numerical HSS rank $r$ is usually not known {\it a priori}.
The stopping criteria adaptivity ensures that we generate sufficient (for robustness),
yet not too many (for high performance),
random sketching operators (columns of $R$) until the range of $A$ is well approximated.
The stopping criterion in~\cite{gorman2019robust} is based on a probabilistic Frobenius norm estimation of $A$ by the sketch matrix $S = AR$ and concentration bounds when sketching with Gaussian sketching operators.
This analysis leads to a robust stopping criterion taking into account
both absolute and relative errors.
In this paper, we present theoretical analysis which justifies more general JL sketching operators.
We extend the concentration bounds discussed in~\cite{gorman2019robust} to
all real JL sketching operators and examine this bound for the original Gaussian sketching operators, SRHT operators
and SJLT operators.

\begin{remark}
In most literature on randomized sketching, the sketching operator $R$ is applied on the left of a vector or a matrix, such as $RA$. But in the HSS construction, we need to apply $R$ on the right of $A$ to probe its column space. Therefore, in the HSS context, we use the transpose of sketching operators described in existing JL theory.
\end{remark}

The contributions of this work are:
\begin{itemize}
    \item We generalize an adaptive HSS compression algorithm presented in Gorman et al. \cite{gorman2019robust} that required Gaussian sketching operators to any Johnson--Lindenstrauss (JL) sketching operators.
    \item We show that the Frobenius norm stopping criteria from Gorman et al. \cite{gorman2019robust} are still valid for JL sketching operators and prove Frobenius norm bounds for JL sketching operators and SJLT.
    \item We prove range-finder bounds for JL sketching operators and Sparse Johnson-Lindenstrauss Transforms (SJLT) which state that the sketch $S = A R$ for a low rank matrix $A$ contains relevant range information of the original matrix. This allows us to use the sketch instead of the original block when doing HSS compression.
    \item We implement our general HSS compression algorithm in the STRUMPACK C++ library \cite{strumpack_web} which allows the user to choose among sketching operators implemented in STRUMPACK or implement their own.
    We implement SJLT and SHRT as specific use cases and discuss the implementation details for SJLT in which we leverage a special data structure and multiplication routines for computing $A R$ and $A^* R$ and for SHRT which we develop an efficient multiplication routine.
    \item We compare our serial method using SJLT, SRHT and the existing Gaussian sketching operators and observe up to 2.5$\times$ speedups when using SJLT or SRHT while maintaining the similar compression accuracy. The number of flops for SJLT is reduced from $O(n^2 d)$ to $O(n\alpha d)$, where $\alpha \ll d$; usually $\alpha = 2$ to $4$ is sufficient.
    \item We implement and compare a distributed (Message Passing Interface) implementation for Gaussian and SJLT sketching operators. We observe that the sketching time may be improved by a factor of 40 in some cases when using SJLT over Gaussian sketching operators and overall compression is sped up by a factor of up to 35$\times$.
\end{itemize}

The rest of the paper is organized as follows. In the end of this section we outline the notation for the rest of the paper. In \cref{sec:prelim} we discuss the background on HSS matrices, our HSS compression algorithm, \cref{algo::HSScompressNodeAdaptive}, which we generalize from \cite{gorman2019robust} and the Johnson--Lindenstrauss sketching operators which we use in our generalization. Next,  in \cref{sec:adaptivestoppingcrit} we discuss the adaptive stopping criteria in \cref{algo::HSScompressNodeAdaptive} which leverage a Frobenius norm stopping criteria. Then in \cref{sec:frotheory} we prove that the Frobenius norm stopping criteria generalize to all Johnson--Lindenstrauss sketching operators. In \Cref{sec:rangefinderBounds} we prove range-finder bounds for JL sketching operators and SJLT sketching operators; these results enable us to use the sketch instead of the full low rank blocks in the compression.
\Cref{sec:cstheory} discusses the implementation details of using SJLT, followed by \cref{sec:srht_implement}, which outlines the implementation of SRHT. Afterwards, in \cref{sec:experiment} we conduct experiments comparing SJLT, SRHT and Gaussian sketching showing similar compression errors and faster compression when using SJLT or SRHT. Additionally, we discuss and experimentally compare the parallel distributed implementations for Gaussian and SJLT sketching.
Finally, in \cref{sec:conclusion} we state our concluding remarks.
\section{Preliminaries} \label{sec:prelim}
We begin this section by describing the HSS matrix format and the adaptive HSS construction algorithm. We then discuss the relevant background to incorporate a more general and possibly faster randomization via Johnson--Lindenstrauss sketching in our HSS construction algorithm. 

\subsection{Notation}
We denote a matrix as $A \in \C^{m \times n}$. 
We let a random \textit{sketching operator} be denoted as $R \in \R^{n \times d}$ and vectors $x \in \R^n$.
We refer to $S = AR$ as a \textit{sketch} of the matrix $A$. Sketching is the process of applying $R$ to $A$ on the right, computing $AR$. We use $\log$ to represent the logarithm with base $e$. We let $\norm{A},\; \norm{x}$ be the matrix and vector two-norm respectively. We let $\norm{A}_F$ represent the Frobenius norm of a matrix. We define $[n] = (1:n) = \{1,...n\}$ to be the set of integers from one to $n$. 
We use MATLAB notation to represent indexing a row, a column or a sub-block of our matrix,
where lower case $(i,\;j)$ represents individual entries and upper case $(I,\;J)$ represents index sets.
For example $A(i,j)$ is entry $(i,j)$ of matrix $A$, $A(i,:)$ is row $i$ of matrix $A$ and $A(I,J)$ is the sub-block of $A$ containing the rows in index set $I$ and columns in index set $J$. In the theory section to compress this notation we use $A_{i:}$ to represent the row $i$ of matrix $A$ and  $A_{:j}$ to represent column $j$ of matrix $A$.
When computing a QR factorization for a matrix $A$ we let $A = Q \Omega$ where $Q$ is an orthogonal matrix and $\Omega$ is upper triangular. An interpolative decomposition of a matrix $A$ with rank $r$ is computed as $A \approx A(:,J)U$ where $J$ is an index set of size $r$ and $U$ is an $r \times n$ matrix containing an $r \times r$ identity block. Finally, the projection operator onto a matrix $S$ is defined as $P_S = SS^\dagger$.

\subsection{Background on HSS Matrices} 

\usetikzlibrary{decorations.shapes}
\usetikzlibrary{decorations.pathreplacing}
\tikzstyle{cnode} = [draw, circle,scale=.75]
\tikzstyle{level 1} = [level distance=.2\textwidth, sibling distance=.45\textwidth]
\tikzstyle{level 2} = [level distance=.15\textwidth, sibling distance=.25\textwidth]
\tikzstyle{level 3} = [sibling distance=.1\textwidth]

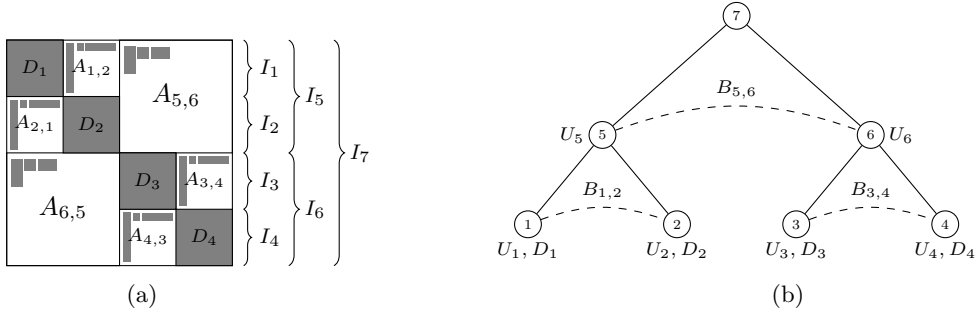
\begin{figure}
  \centering
  \begin{subfigure}[b]{.48\columnwidth}
    \centering
    \begin{tikzpicture}[scale=3]
      \path[use as bounding box] (0,0) rectangle (1.2,1); 
      \draw (0,0) rectangle (1,1);
      \draw (0,0) rectangle (0.5,0.5);
      \draw (1,1) rectangle (0.5,0.5);
      \draw (0,1) rectangle (0.25,0.75);
      \draw [fill=gray] (0,1) rectangle (0.25,0.75);
      \draw (0.5,0.5) rectangle (0.75,0.25);
      \draw [fill=gray] (0.5,0.5) rectangle (0.75,0.25);
      \draw [fill=gray] (0.25,0.75) rectangle (0.5,0.5);
      \draw [fill=gray] (0.75,0.25) rectangle (1,0);
      \path [fill=gray] (0.02,0.35) rectangle (0.07,0.47);
      \path [fill=gray] (0.08,0.42) rectangle (0.13,0.47);
      \path [fill=gray] (0.14,0.42) rectangle (0.22,0.47);

      \node [label] at (0.25,0.25) {$A_{6,5}$};
      \node [label] at (0.75,0.75) {$A_{5,6}$};
      \path [fill=gray] (0.02+0.5,0.35+0.5) rectangle (0.07+0.5,0.47+0.5);
      \path [fill=gray] (0.08+0.5,0.42+0.5) rectangle (0.13+0.5,0.47+0.5);
      \path [fill=gray] (0.14+0.5,0.42+0.5) rectangle (0.22+0.5,0.47+0.5);

      \path [fill=gray] (0.015,0.5+0.015) rectangle (0.05,0.235+0.5);
      \path [fill=gray] (0.06,0.2+0.5) rectangle (0.09,0.235+0.5);
      \path [fill=gray] (0.1,0.2+0.5) rectangle (0.25-0.015,0.235+0.5);

      \path [fill=gray] (0.015+0.25,0.015+0.75) rectangle (0.05+0.25,0.235+0.5+0.25);
      \path [fill=gray] (0.06+0.25,0.2+0.75) rectangle (0.09+0.25,0.235+0.5+0.25);
      \path [fill=gray] (0.1+0.25,0.2+0.75) rectangle (0.5-0.015,0.235+0.5+0.25);

      \path [fill=gray] (0.5+0.015,0.015) rectangle (0.5+0.05,0.235+0.5-0.5);
      \path [fill=gray] (0.5+0.06,0.2+0.5-0.5) rectangle (0.5+0.09,0.235+0.5-0.5);
      \path [fill=gray] (0.5+0.1,0.2) rectangle (0.75-0.015,0.235+0.5-0.5);

      \path [fill=gray] (0.5+0.015+0.25,0.25+0.015) rectangle (0.5+0.05+0.25,0.235+0.5+0.25-0.5);
      \path [fill=gray] (0.5+0.06+0.25,0.2+0.25) rectangle (0.5+0.09+0.25,0.235+0.5+0.25-0.5);
      \path [fill=gray] (0.5+0.1+0.25,0.2+0.25) rectangle (1.0-0.015,0.235+0.5+0.25-0.5);

      \node [label] at (0.125/2*6,1-0.125/2*2) {\scriptsize $A_{1,2}$};
      \node [label] at (0.125/2*2,1-0.125/2*6) {\scriptsize $A_{2,1}$};
      \node [label] at (0.125/2*10,1-0.125/2*14) {\scriptsize $A_{4,3}$};
      \node [label] at (0.125/2*14,1-0.125/2*10) {\scriptsize $A_{3,4}$};
      \node [label] at (0.125,1-0.125) {\scriptsize $D_{1}$};
      \node [label] at (0.125*3,1-0.125*3) {\scriptsize $D_{2}$};
      \node [label] at (0.125*5,1-0.125*5) {\scriptsize $D_{3}$};
      \node [label] at (0.125*7,1-0.125*7) {\scriptsize $D_{4}$};

      \draw [decorate,decoration={brace,amplitude=3pt},yshift=0pt,xshift=-.1cm] (1.15,1-0.25*0) -- (1.15,1-0.25) node [black,midway,xshift=.35cm] {\footnotesize $I_1$};
      \draw [decorate,decoration={brace,amplitude=3pt},yshift=0pt,xshift=-.1cm] (1.15,1-0.25*1) -- (1.15,1-0.25*2) node [black,midway,xshift=.35cm] {\footnotesize $I_2$};
      \draw [decorate,decoration={brace,amplitude=3pt},yshift=0pt,xshift=-.1cm] (1.15,1-0.25*2) -- (1.15,1-0.25*3) node [black,midway,xshift=.35cm] {\footnotesize $I_3$};
      \draw [decorate,decoration={brace,amplitude=3pt},yshift=0pt,xshift=-.1cm] (1.15,1-0.25*3) -- (1.15,1-0.25*4) node [black,midway,xshift=.35cm] {\footnotesize $I_{4}$};
      \draw [decorate,decoration={brace,amplitude=4pt},yshift=0pt,xshift=-.1cm] (1.35,1-0.5*0) -- (1.35,1-0.5) node [black,midway,xshift=.35cm] {\footnotesize $I_5$};
      \draw [decorate,decoration={brace,amplitude=4pt},yshift=0pt,xshift=-.1cm] (1.35,1-0.5*1) -- (1.35,1-0.5*2) node [black,midway,xshift=.35cm] {\footnotesize $I_{6}$};
      \draw [decorate,decoration={brace,amplitude=4pt},yshift=0pt,xshift=-.1cm] (1.55,1) -- (1.55,0) node [black,midway,xshift=.35cm] {\footnotesize $I_{7}$};

    \end{tikzpicture}
    \caption{}
    \label{fig:HSS}
  \end{subfigure}
  \hfill
  \begin{subfigure}[b]{.48\textwidth}
    \begin{tikzpicture}\scriptsize
      \node[cnode](7){7}
      child{node[cnode](5){5} child{node[cnode](1){1}} child{node[cnode](2){2}} }
      child{node[cnode](6){6} child{node[cnode](3){3}} child{node[cnode](4){4}} }  ;
      \foreach \ln / \rn in {1/2,3/4,5/6}{
        \draw[dashed] (\ln) to [out=20,in=160]
           node[above]{\(B_{\ln,\rn}\)} (\rn);
      }
      \foreach \ln in {1,2,3,4}{ \node[below=.5em] at (\ln) {\(U_{\ln},D_{\ln}\)}; }
      \foreach \ln in {5}{  \node[left=.5em] at (\ln) {\(U_{\ln}\)}; }
      \foreach \ln in {6}{ \node[right=.5em] at (\ln) {\(U_{\ln}\)}; }
    \end{tikzpicture}
    \caption{}
    \label{fig:HSStree}
  \end{subfigure}
  \caption{(a) Illustration of a symmetric HSS matrix using $3$
  levels. Diagonal blocks are partitioned recursively. Gray blocks
  denote the basis matrices. (b) Tree for the HSS matrix from (a),
  using topological ordering. All nodes except the root store $U_i$ (and
  $V_i$ for the non-symmetric case). Leaves store $D_i$, non-leaves $B_{ij}$ (and $B_{ji}$ for the non-symmetric case).}
\end{figure}

Consider a square matrix $A \in \mathbb{C}^{n \times n}$ and index set
$I_{A} = \{1,\dots,n \}$. The HSS matrix representation is a hierarchical block
$2 \times 2$ partitioning of the matrix, where all off-diagonal blocks are compressed,
or approximated, using a low-rank product, see \cref{fig:HSS}. 
The hierarchical structure is succinctly described by a binary tree $\mathcal{T}$,
called {\em cluster tree}, as depicted in~\cref{fig:HSStree}. The recursive partitioning stops
at the leaf level, which corresponds to the smallest block size of the partition.
The leaves do not need to be of uniform size, because for certain input matrices
a non-uniform partition may be preferable for smaller numerical ranks. 

Each node $\tau\in \mathcal{T}$ is associated with a contiguous subset
$I_\tau \subset I_{\textrm{root}(\mathcal{T})} $.
We use $\#I_\tau$ to denote the cardinality of $I_{\tau}$.
For two children $\nu_1$ and $\nu_2$ of $\tau$, it
holds that $I_{\nu_1} \cup I_{\nu_2} = I_\tau$ and $I_{\nu_1} \cap
I_{\nu_2} = \emptyset$. It follows that $\cup_{\tau \in
  \textrm{leaves}(\mathcal{T})} I_\tau =
I_{\textrm{root}(\mathcal{T})} = I_A$. The same tree $\mathcal{T}$ is
used for the rows and the columns of $A$.
Commonly, the tree nodes are numbered in a {\em postorder},
and most of the HSS algorithms, such as construction, matrix-vector multiplication,
factorization and solve etc., can be described as traversing the
cluster tree following this postorder.
However, in the parallel implementation and throughout this paper, we traverse the
cluster tree following a bottom-up {\em topological order}, i.e., level by level from
the leaf level to the root, see~\cref{fig:HSStree}.

Each leaf node $\tau$ of $\mathcal{T}$ corresponds to a
diagonal blocks of $A$, denoted as $D_\tau$, and is stored as
a dense matrix : $D_{\tau} = A(I_{\tau},I_{\tau})$.
At each node $\tau$, the off-diagonal block $A(I_\tau, I_A\setminus I_\tau)$
is called a row {\em Hankel block}, and the off-diagonal block
$A(I_A\setminus I_\tau, I_\tau)$ is a column Hankel block.
The compression algorithm sweeps through the tree bottom-up. At each tree node,
it computes the column basis for the row Hankel block and row basis for
the column Hankel block. Note that all the blocks within a row (column)
Hankel block share the same column (row) basis.
The HSS algorithm goes further to reduce complexity: each internal node
recycles the bases computed at the two children nodes. Thus, the
basis at each internal node has the nested structure
(see Equation~\cref{eq:nestedUV}), called {\em nested basis} property, which we describe now.
For a node $\tau$ with two children $\nu_1$ and $\nu_2$, 
the off-diagonal block $A_{\nu_1,\nu_2} = A(I_{\nu_1},I_{\nu_2})$
is factored (approximately) as
\begin{equation}
  A_{\nu_1,\nu_2} \approx U^{\mathrm{big}}_{\nu_1} B_{\nu_1,\nu_2} \left( V_{\nu_2}^{\mathrm{big}}\right)^*  \, ,
\end{equation}
where $U^{\mathrm{big}}_{\nu_1}$ has dimensions $\#I_{\nu_1} \times r^r_{\nu_1}$, $B_{\nu_1,\nu_2}$ is a submatrix of $A_{\nu_1,\nu_2}$ with dimensions  $\#I_{\nu_1} \, \times \, \#I_{\nu_2}$
and $V_{\nu_2}^{\mathrm{big}}$ has dimensions $\#I_{\nu_2} \, \times \,
r^c_{\nu_2}$\footnote{Superscripts $r$ and $c$ are used to denote that
  $U^{\mathrm{big}}$/$V^{\mathrm{big}}$ are column/row bases for the
  row/column Hankel blocks of $A$.}. The {\em HSS-rank} $r$ is a numerical rank defined as
the maximum of $r^r_\tau$ and $r^c_\tau$ over all off-diagonal blocks, where typically $r \ll
N$. $B_{\nu_1,\nu_2}$ and $B_{\nu_2,\nu_1}$ are stored at the parent
node. For a node $\tau$ with children $\nu_1$ and $\nu_2$,
$U^{\mathrm{big}}_\tau$ and $V^{\mathrm{big}}_\tau$ are represented
hierarchically as
\begin{equation}\label{eq:nestedUV}
  U^{\mathrm{big}}_{\tau} = \begin{bmatrix} U^{\mathrm{big}}_{\nu_1} & 0 \\ 0 & U^{\mathrm{big}}_{\nu_2} \end{bmatrix} U_\tau
  \quad \textrm{and} \quad
  V^{\mathrm{big}}_{\tau} = \begin{bmatrix} V^{\mathrm{big}}_{\nu_1} & 0 \\ 0 & V^{\mathrm{big}}_{\nu_2} \end{bmatrix} V_\tau \, .
\end{equation}
Note that for a leaf node $U^{\mathrm{big}}_\tau = U_\tau$ and
$V^{\mathrm{big}}_\tau = V_\tau$. Additionally, every node $\tau$, except the
root, keeps matrices $U_\tau$ and $V_{\tau}$. The top two levels of
the example shown in Figure~\ref{fig:HSS} can be written out
explicitly as

\begin{equation}
  A = \begin{bmatrix}
    D_1 & U_1 B_{1,2} V_2^* & \multicolumn{2}{c}{\multirow{2}{*}{$\begin{bmatrix} U_1 & 0 \\ 0 & U_2 \end{bmatrix} U_5 B_{5,6} V_6^* \begin{bmatrix} V_3^* & 0 \\ 0 & V_4^* \end{bmatrix}$}} \\
    U_2 B_{2,1} V_1^* & D_2 &  \\
    \multicolumn{2}{c}{\multirow{2}{*}{$\begin{bmatrix} U_3 & 0 \\ 0 & U_4 \end{bmatrix} U_6 B_{6,5} V_5^* \begin{bmatrix} V_1^* & 0 \\ 0 & V_2^* \end{bmatrix}$}}
    & D_3 & U_3 B_{3,4} V_4^* \\
      & & U_4 B_{4,3} V_3^* & D_4
    \end{bmatrix} \, .
    \label{eq:HSSexamplematrix}
\end{equation}
Only at the leaf nodes, where $U_\tau^\mathrm{big} \equiv U_\tau$, is the $U_\tau^\mathrm{big}$ stored explicitly.
A similar relation holds for the $V_\tau$ basis matrices.
For symmetric matrices, $U_i \equiv V_i$ and $B_{ij} \equiv B_{ji}$.

HSS matrix construction based on randomized
sampling techniques has attracted a lot of attention in recent years.
Compared to standard HSS construction techniques~\cite{xia2010fast,
wang2013efficient} which assume that an explicit matrix is given on
input, randomized techniques allow the design of \emph{matrix-free}
construction algorithms. A \emph{fully matrix-free} construction
algorithm relies solely on the availability of a matrix-vector product
routine~\cite{levitt2022linear}. 

A \emph{partially matrix-free} algorithm relies on a
matrix-vector product routine and additionally requires access to some entries of the matrix~\cite{martinsson2011fast,gorman2019robust}.
For certain applications, for example
Toeplitz systems, where fast (e.g., linear time) matrix-vector products
exist, a randomized algorithm typically has linear or log-linear
complexity instead of quadratic complexity with the standard construction algorithms \cite{xia2012superfast}.

This paper is based on a partially matrix-free algorithm and its adaptive version.
Our implementation in STRUMPACK~\cite{strumpack_web} is designed for nonsymmetric matrices and is parallelized to leverage shared and distributed memory architectures.
Other works have investigated parallel HSS constructions~\cite{ghysels2016efficient, wang2013efficient,fernando2017scalable} and even GPU implementations \cite{chen2022solving}.

\subsection{Sketching Based Adaptive HSS Construction Algorithm} 

We extend the HSS construction algorithm described in \cite{gorman2019robust} which is partially matrix-free and leverages sketching.
The algorithm needs a matrix-vector multiplication routine and access to $O(nr)$
entries of $A$.
Instead of compressing the Hankel block itself at each node, we compress a sketch of the Hankel block from which we can recover the compressed version of the off diagonal block~\cite{martinsson2011fast}. 
Then, as we traverse up the tree we combine local sketches from both of the children Hankel blocks, and subtract off the already compressed low rank blocks to recover a local sketch for the parent Hankel block that is written in the basis of the children blocks. 
Finally, this local sketch can be compressed, exploiting the nested basis property.
This procedure is described in equations (2.5)-(2.9) of~\cite{gorman2019robust} and in detail in~\cref{appendix:algodescription}~\cite{martinsson2011fast}. 

We use an interpolative decomposition to compress the off diagonal Hankel blocks \cite{woolfe2008fast}. Given a matrix $A$ with dimensions $m \times m$ with numerical rank $r \ll m$. We can write an interpolative decomposition of $A$ as $A = U A(J,:) + O(\eps)$. Where $U$ has dimensions $m \times r$ and $J$ is an index set of $r$ rows. This interpolative decomposition can be computed using a rank revealing QR factorization~\cite{gu1996efficient}, detailed in equation (2.4) of \cite{gorman2019robust}.

\begin{remark}
    In practice, the interpolative decomposition is computed using a rank revealing QR factorization as $A \approx A(:,J) V$ which computes a column basis. To compute a row basis, we compute the interpolative decomposition of $A^* \approx A^*(:,J) V$ and apply the conjugate transpose so  $A \approx V^* A(J,:)$, then we can rename $V^* = U$ so  $A \approx U A(J,:)$ resulting in a row basis.
\end{remark}

We can represent a numerically low rank Hankel block as a basis matrix $U$ and a sampling of the rows. 
To compress our low numerical rank off diagonal matrices in HSS we first compute an interpolative decomposition for both row blocks and column blocks.  Then, we combine the bases and query the matrix $A$ for the selected row indices $I$ and column indices $J$ resulting in the representation: $U A(I,J) V$ ~\cite{xia2012superfast}. 
The $r$ rows of the sketch correspond to $r$ rows of the original matrix $A$, allowing us to only use our sketch to compress the Hankel blocks as long as the sketch of the Hankel block is representative of the original Hankel block.

In most practical problems, the numerical rank of the low dimensional off diagonal blocks
is not known {\it a priori}, therefore, the size of the sketching operator needs to be chosen adaptively. Previously, Gorman et al.~\cite{gorman2019robust} developed
a {\it blocked incrementing strategy} which fully reuses 
the already-computed basis set in two ways:
(1) at each HSS tree node $\tau$, if the initial samples are not sufficient, we increase a block of samples $\Delta d$, and augment $\tau$'s orthogonal basis by this amount; (2) This augmented basis will 
cause basis sets of the ancestor nodes to have sizes
at least as large as that of $\tau$, while the 
basis sets of the descendant nodes are not affected.
\Cref{algo::HSScompressNodeAdaptive} illustrates the HSS compression procedure with adaptation built in. The details can also be found in~\cite{ghysels2017-ipdps}.

In the original adaptive compression algorithm from \cite{gorman2019robust} the global sketch of the matrix $A$ was computed using a Gaussian sketching operator. This sketching operator is dense so it requires $O(n^2)$ time to compute an additional column when trying to expand the sketch. 
Now we extend the algorithm to any Johnson--Lindenstrauss sketching operator, and in
particular, SJLT, to speed up the sketching operation.

\subsection{Background on Johnson--Lindenstrauss Sketching} 
We begin this section by stating the classical Johnson--Lindenstrauss (JL) lemma \cite{johnson1984extensions}.
The particular version below is from \cite{dasgupta2003elementary}.

\begin{lemma}[Johnson--Lindenstrauss Lemma \cite{johnson1984extensions}] \label{def:JL}
Given $\varepsilon \in (0,1)$, let $m$ and $d$ be positive integers such that $d \geq 4 (\varepsilon^2/2 - \varepsilon^3/3)^{-1} \log m$. 
For any set $P$ of $m$ points in $\mathbb{R}^{n}$ there exists $f: \mathbb{R}^{n} \rightarrow \mathbb{R}^{d}$ such that for all $u, v \in P$
\begin{equation} \label{eq:JL-lemma}
    (1-\varepsilon) \|u-v\|^2 
    \leq \|f(u)-f(v)\|^2 
    \leq (1+\varepsilon) \|u-v\|^2.
\end{equation}
\end{lemma}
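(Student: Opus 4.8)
The plan is to prove this by the probabilistic method: I will exhibit a \emph{random} linear map and show that, with positive probability, it satisfies \cref{eq:JL-lemma} for every pair simultaneously. Concretely, I would take $f(x) = \sqrt{n/d}\,\Pi x$, where $\Pi$ is the orthogonal projection onto a uniformly random $d$-dimensional subspace of $\R^n$ (equivalently, up to the scaling, $f(x) = \tfrac{1}{\sqrt{d}} G x$ for a matrix $G$ with i.i.d.\ $\mathcal{N}(0,1)$ entries, which also connects the statement to the Gaussian sketches used later). The crucial reduction is that $f$ is linear, so $f(u) - f(v) = f(u-v)$; it therefore suffices to control the length distortion of a single fixed vector $w = u-v$ and then take a union bound over the finitely many pairs.

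By homogeneity I may assume $\norm{w} = 1$, and by rotational invariance of the random subspace the quantity $Z \defeq \norm{\Pi w}^2$ has the same law as the squared length of the first $d$ coordinates of a uniformly random unit vector in $\R^n$; in particular $\E[Z] = d/n$, so that $\E[\,\norm{f(w)}^2\,] = \norm{w}^2 = 1$. Under this normalization the two halves of \cref{eq:JL-lemma} become the two tail events $\{Z \le (1-\varepsilon)\,d/n\}$ and $\{Z \ge (1+\varepsilon)\,d/n\}$, and everything reduces to bounding the probability of each.

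The technical heart of the argument is a Chernoff-type concentration estimate for $Z$. I would apply Markov's inequality to $\exp(tZ)$ (respectively $\exp(-tZ)$), use the explicit moment generating function of the projection length, optimize over $t$, and then invoke the elementary logarithmic inequalities $\log(1+x) \le x - x^2/2 + x^3/3$ and $\log(1-x) \le -x - x^2/2 - x^3/3$ to massage the exponent into the desired shape. The target bounds are
\begin{equation}
\P\!\brackets{Z \le (1-\varepsilon)\tfrac{d}{n}} \le \exp\!\parens{-\tfrac{d}{2}\parens{\tfrac{\varepsilon^2}{2} - \tfrac{\varepsilon^3}{3}}}, \qquad \P\!\brackets{Z \ge (1+\varepsilon)\tfrac{d}{n}} \le \exp\!\parens{-\tfrac{d}{2}\parens{\tfrac{\varepsilon^2}{2} - \tfrac{\varepsilon^3}{3}}}.
\end{equation}
This is the step I expect to be the main obstacle: extracting the cubic correction $-\varepsilon^3/3$ exactly (rather than a cruder quadratic bound) hinges on the precise logarithmic estimates above, and it is this constant that dictates the specific threshold for $d$ in the hypothesis.

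Finally I would close with a union bound. There are $\binom{m}{2}$ pairs and two tail events per pair, hence fewer than $m^2$ bad events, each of probability at most $\exp(-\tfrac{d}{2}(\varepsilon^2/2 - \varepsilon^3/3))$. The assumption $d \ge 4(\varepsilon^2/2 - \varepsilon^3/3)^{-1}\log m$ forces $\tfrac{d}{2}(\varepsilon^2/2-\varepsilon^3/3) \ge 2\log m$, so each such probability is at most $m^{-2}$, and the total failure probability is strictly below $m^2 \cdot m^{-2} = 1$ (in fact at most $(m-1)/m$). Thus with positive probability the random map $f$ distorts every pairwise distance within the required factors, and the existence of at least one such $f$ follows.
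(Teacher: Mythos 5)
The paper itself offers no proof of this lemma (it is quoted from \cite{johnson1984extensions} in the precise form of \cite{dasgupta2003elementary}), and your proposal is correct and follows essentially that cited proof: a uniformly random $d$-dimensional projection, reduction by linearity to a single unit vector, Chernoff-type tail bounds whose logarithmic estimates produce the exponent $\tfrac{d}{2}\left(\varepsilon^2/2-\varepsilon^3/3\right)$, and a union bound over the $\binom{m}{2}$ pairs with two tails each, giving failure probability at most $(m-1)/m < 1$ and hence existence by the probabilistic method. The only imprecision is the parenthetical claim that the scaled projection is ``equivalently'' the Gaussian map $\tfrac{1}{\sqrt{d}}Gx$ --- for a fixed unit vector the squared image norms have different laws (a scaled Beta versus a scaled chi-squared), though both concentrate suitably --- but since your argument runs entirely through the projection version, this aside is harmless.
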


\Cref{def:JL} does not say anything about \emph{how} to construct $f$ and what form it might take. 
In practice, $f$ is usually chosen to be a linear map in the form of a matrix which is drawn randomly from an appropriate distribution.
The following definition captures this idea \cite{krahmer2011new}.

\begin{definition}[JL Sketching Operator] \label{def:distJL}
Suppose $\mathcal{D}$ is a distribution over matrices of size $d \times n$.
We say that a matrix $R \sim \mathcal{D}$ is a $(n, d, \delta, \eps)$-JL sketching operator if for any vector $x \in \R^n$ it satisfies 
\begin{equation*}
       \Pr_{R \sim \mathcal{D}}\left[\left|\|Rx\|^2-\|x\|^2 \right|>\eps \|x\|^2\right]<\delta.
\end{equation*}
\end{definition}

The condition in \cref{def:distJL} considers length preservation of a single vector.
A standard union bound argument can be used to show that a JL matrix with probability $1 - \delta$ satisfies \eqref{eq:JL-lemma} for all $u,v \in P$ where $P$ contains $m$ points, provided that $d$ is chosen to be sufficiently large; see Remark 2.2
of \cite{bamberger2021johnson} for a discussion about this. This non-constructive definition for a JL sketching operator allows us to develop a unified theory for HSS construction using general JL sketching operators. From a practical standpoint we consider Gaussian sketching operators, SJLT and SRHT as specific realizations of JL sketches with tighter bounds. 

In the following subsections, we introduce three popular JL sketching operator distributions.
All three satisfy the condition in \cref{def:distJL} provided that $d$ is large enough. 
Details on theoretical guarantees for each distribution appear in \cref{sec:rangefinderBounds,sec:frotheory}.

\subsubsection{Gaussian Sketching Operator}\label{const:gaussian}
A \emph{Gaussian sketching operator} $R$ of size $n \times d$ has entries which are drawn independently from a normal distribution with mean zero and variance $1/d$ \cite{avron2011RandomizedAlgorithms, dasgupta2003elementary}.  
We indicate that $R$ is drawn from such a distribution by writing $R \sim \Gaussian(n,d)$.
Gaussian sketching operators are JL sketching operators if the dimension $d$ is sufficiently large \cite{dasgupta2003elementary}. 
Key advantages of Gaussian sketching operators are ease of construction and that they lend themselves to simple and clean theoretical analysis \cite[Remark~8.2]{martinsson_tropp_2020}.
The main downside of the Gaussian sketching operator is that it is relatively slow to apply since it has no particular structure and is dense.
The sketching operators in the two subsections below address this issue by using fast structured or sparse operators, respectively.

\subsubsection{Subsampled Randomized Hadamard Transform (SRHT)}\label{const:fjlt}
A \emph{subsampled randomized Hadamard transform} (SRHT) of size $n \times d$ takes the form $R=DHP$ \cite{AilonChazelle06}. The matrix $D \in \R^{n \times n}$ is diagonal with the diagonal entries drawn independently from the Rademacher distribution, i.e., each entry is $+1$ with probability $1/2$ and $-1$ with probability $1/2$.
The matrix $H \in \R^{n \times n}$ is the normalized Hadamard matrix, a deterministic unitary matrix which can be applied to a vector in $O(n\log(n))$ time instead of $O(n^2)$. 
The normalized Hadamard matrix can be defined recursively via $H_{0} = [1]$ and $H_{2n}=[H_n,  H_n ; H_n, -H_n]$.
Finally, $P \in \R^{n \times d}$ is a sparse random sampling matrix whose columns are chosen independently and uniformly at random from the set $\{ \sqrt{n/d} \cdot e_j^T \}_{j=1}^n$ where $e_j \in \R^n$ is the $j$th canonical basis vector.
We indicate that $R$ is drawn in this fashion by writing $R \sim \SRHT(n,d)$.
An early version of the SRHT appeared in \cite{AilonChazelle06} where each entry of $P$ was independently chosen to be either zero or nonzero, with the nonzero entries drawn from an appropriately scaled normal distribution.

\subsubsection{Sparse Johnson--Lindenstrauss Transform (SJLT)}\label{const:sjlt}
The \emph{sparse
Johnson--Lindenstrauss transform} (SJLT) was first introduced in \cite{KaneNelson14} with subsequent further analysis in \cite{nelson2013osnap,cohen2018simple}.
An SJLT matrix $R$ of size $n \times d$ has a fixed number $\alpha \in [d]$ of nonzero entries per row.
The nonzero entries are drawn independently from a scaled Rademacher distribution, taking values in $\{1/\sqrt{\alpha}, -1/\sqrt{\alpha}\}$ uniformly at random.
The paper \cite{KaneNelson14} proposes two different methods for randomly drawing the position of the nonzero entries in $R$.
The first method draws the $\alpha$ nonzero positions for each row of $R$ uniformly at random from $[d]$.
The second method divides the length-$d$ rows of $R$ into $d / \alpha$ chunks, and for each chunk a single entry is selected uniformly at random to be nonzero. 
This method requires $d/\alpha$ to be an integer.
For both methods, sampling is done for each row independently of the nonzero positions in the other rows.
The two approaches to constructing an SJLT are referred to as the \emph{graph construction} and \emph{block construction}, respectively. 
Throughout the paper, we will denote an SJLT drawn using either construction by $R \sim \SJLT(n,d,\alpha)$.
We implement both approaches in our software and allow the user to select which one to use. 
We test our implementation with the block construction since it is easier to construct and performs better experimentally than the graph construction.
\section{Stopping Criteria for Adaptive HSS Algorithm} \label{sec:adaptivestoppingcrit}
For any adaptive algorithm, it is critical to develop robust stopping criteria,
which allow sufficiently large sketches (enough columns of $S = AR$) to ensure accuracy
but not too large to hurt performance. 
The goal is to find $d$ columns of $R$ to approximate the numerical HSS rank $r$,
where $r < d \ll n$.
In an earlier work~\cite{gorman2019robust}, Gorman et al. developed a block incrementing strategy,
which begins with $d_0$ columns and adds $\Delta d$ columns iteratively.
The algorithm terminates when the last $\Delta d$ columns does not contain new range information. 
One of their primary contributions is the development of the Frobenius norm stopping criteria. They showed that when the sketching operator $R$ has i.i.d. standard Gaussian entries with mean zero and variance one, 
$\mathbb{E}[\|\frac{1}{\sqrt{d}}S\|_F^2]= \|A\|_F^2$.
Moreover, a concentration bound was established detailing that when $R$ has
more columns the Frobenius norm of the sketch matrix is closer to the Frobenius original matrix with high probability [Theorem 3.3]~\cite{gorman2019robust}.
The significance of this theoretical result is that we can use the projection error
based on the sketch $S$ to stop the iteration instead of the original matrix $A$.
The Frobenius norm stopping criteria are: 
\begin{equation} \label{eq:stopping_criteria-1}
    \frac{\|\widehat{S}\|_{F}}{\|\Tilde{S}\|_{F}} < \epsr \, , \qquad
   \|\widehat{S}\|_{F} < \epsa.
\end{equation}
Where $\Tilde{S} = A_{\nu_i,\nu_j} \Bar{R}$ is a matrix of the $\Delta d$ new sketch for
the Hankel block and $\widehat{S} = (I - Q_\tau Q_\tau^*) \Tilde{S}$ is the projection of the new sketch onto the orthogonal complement of the current sketch
($Q_\tau$ is constructed from $A_{\nu_i,\nu_j} R$). If $\|\widehat{S}\|_F$ is small either relative to the first $d$ columns of the sketch or absolutely then we do not need more columns. For the block incremental adaptation, we need to employ an additional rank deficiency
test as part of the stopping criteria, see~\cite{gorman2019robust}[Section 3.5] for details.

\begin{remark}
    We have updated the stopping condition $\frac{1}{\sqrt{d}}\|\widehat{S}\|_{F} < \epsa$ in \cite{gorman2019robust} to $\|\widehat{S}\|_{F} < \epsa $ because now we scale the sketching operator $R$ so that it satisfies the JL sketching operator definition, removing the need for $\frac{1}{\sqrt{d}}$ scaling.
\end{remark}

\begin{remark}
    In the implementation we set $\widehat{S} = (I - Q_\tau Q_\tau^*)^2 \Tilde{S}$, which applies two steps of block Gram-Schmidt for projection to ensure orthogonality under roundoff errors \cite{stewart2008block}.
\end{remark}

In the next section, we extend the theory necessary to justify the Frobenius norm stopping criteria. That is, the more columns added to our sketching operator $R$ the closer our sketch $S$ will be to $A$ in terms of Frobenius norm.

\section{Frobenius Norm Bounds} \label{sec:frotheory}

In this section, we present the mathematical theory to support the use of the Frobenius norm bound as one of the stopping criteria discussed in~\Cref{sec:adaptivestoppingcrit}.
The new result in this Section is \cref{thm:froJL}, which is a unified, foundational theorem about the concentration bound for general JL sketching operators. We will then make the connection of this theorem with the existing theory in the literature, sharpening the general bound for Gaussian sketching operators, SJLT and SRHT. 
The unified framework provides theoretical lower bounds on the number of samples, columns of the sketching operator, $d$ needed in each case to achieve the approximation guarantee in a probabilistic sense. 

While these guarantees provide conservative lower bounds on the number of samples, in practice, many fewer samples are needed. In our experiments we observe that the number of samples needed is on the order of the HSS rank. Although, the theoretical bounds are hard to sharpen without additional assumptions, our experimental results highlight the practical efficiency of the method, even when the theoretical lower bounds are pessimistic. The first result provides a Frobenius norm concentration result which holds for any real JL sketching operator.
\begin{theorem} \label{thm:froJL}
Let $A \in \C^{m \times n}$ and $\eps, \delta \in (0,1)$. If $R \in \R^{n \times d}$ is a $(n, d, \delta', \eps)$-JL matrix where $\delta' = \delta/m$ when $A$ is real and $\delta' = \delta/(2m)$ when $A$ is complex, then the following holds with probability at least $1-\delta$:
	\begin{equation}
		(1-\eps) \| A \|_F^2 \leq \| A R \|_F^2 \leq (1+\eps) \| A \|_F^2.
	\end{equation}
\end{theorem}
\begin{proof}
    Consider first the case when $A$ is real.
Since $R$ is a $(n, d, \frac{\delta}{m}, \eps)$-JL matrix it satisfies 
\begin{equation} \label{eq:JL-matrix-prop}
     \operatorname{Pr}\left[\left|\|x^TR\|^2-\|x^T\|^2 \right|>\eps \|x^T\|^2\right]<\frac{\delta}{m}
    \end{equation}
for any $x \in \R^{n}$. 
Let $A_{j:}$ denote the $j$th row of $A$.
By the triangle inequality,
\begin{equation} \label{eq:triangle-inequality}
\left| \normsq{AR}_F - \normsq{A}_F \right|
= \left |\sum_{j=1}^m \left(\normsq{A_{j:} R} -\normsq{A_{j:}} \right) \right |
\leq \sum_{j=1}^m \left | \normsq{A_{j:} R} -\normsq{A_{j:}} \right |.
\end{equation}
Consequently,
\begin{equation} \label{eq:some-inequalities}
\begin{aligned}
    \Pr \left[ \left| \normsq{AR}_F - \normsq{A}_F \right|  > \eps \normsq{A}_F  \right] 
    & \leq \Pr \left[ \sum_{j=1}^m \left | \normsq{A_{j:} R} -\normsq{A_{j:}} \right |  > \eps \sum_{k=1}^m \normsq{A_{j:}}  \right] \\
    & \leq \Pr \left[  \bigcup_{j = 1}^{m} \left( \left | \normsq{A_{j:} R} -\normsq{A_{j:}} \right |  > \eps \normsq{A_{j:}} \right) \right] \\
    & \leq \sum_{j=1}^m  \Pr \left[  \left | \normsq{A_{j:} R} -\normsq{A_{j:}} \right |  > \eps \normsq{A_{j:}} \right]  \\
    & < m \frac{\delta}{m} = \delta,
\end{aligned}
\end{equation}
where the third inequality is a union bound and the final inequality follows from \cref{eq:JL-matrix-prop}.
This proves the result for the real case.

For the complex case, we may write $A = B + \hat{\imath} C$ where $B, C \in \R^{m \times n}$.
Since
\begin{equation} \label{eq:complex-as-real-matrix}
    \|A\|_F^2 
    = \left\| \begin{bmatrix} B \\ C \end{bmatrix} \right\|_F^2, \qquad 
    \| AR \|_F^2 
    = \left\| \begin{bmatrix} B \\ C \end{bmatrix} R \right\|_F^2,
\end{equation}
and $R$ is a $(n,d,\delta/(2m),\varepsilon)$-JL matrix, the complex case follows from the result when $A$ is real (proved above).
\end{proof}

The statement in \cref{thm:froJL} can be strengthened when specific sketching operators are considered.
We state known bounds for the Gaussian sketching operators (\cref{thm:gaussian-estimator}), SJLT (\cref{thm:SJLT-F-norm}), and SRHT (\cref{thm:srtt-estimator}).
The statement in \cref{thm:gaussian-estimator} follows directly from Theorem~5.2 in \cite{avron2011RandomizedAlgorithms}; see \cref{sec:comment-on-avron-results} for details.

\begin{theorem}[Theorem~5.2 in \cite{avron2011RandomizedAlgorithms}]
\label{thm:gaussian-estimator}
    Let $A \in \C^{m \times n}$ and suppose $R \sim \Gaussian(n,d)$.
    If $d \geq 20 \eps^{-2} \log(2/\delta)$, then the following holds with probability at least $1-\delta$:
    \begin{equation}
	(1-\eps) \| A \|_F^2 \leq \| A R \|_F^2 \leq (1+\eps) \| A \|_F^2.
    \end{equation}
\end{theorem}

The result in \cref{thm:SJLT-F-norm} below is a matrix variant of the main result in \cite{KaneNelson14}.
It can be proven with a slight modification to a proof in \cite{cohen2018simple} which provides a simplified analysis of the result in \cite{KaneNelson14}. Our proof for the matrix version is new but since it is long we omit it in the main text.
For completeness, we provide the novel proof in \Cref{appendix:sjlt-fro-norm}.

\begin{theorem}[Matrix version of result in \cite{KaneNelson14}] \label{thm:SJLT-F-norm}
    Let $A \in \C^{m \times n}$ and suppose $R \in \R^{n \times d}$ is an SJLT constructed using either the graph or block construction (see \cref{sec:prelim}), and suppose $\eps \in (0,1)$ and $\delta \in (0,1/2)$. If $d \geq C \eps^{-2} \log(1/\delta)$ and $\alpha = \lceil \eps d \rceil$ where $C$ is an absolute constant, then the following holds with probability at least $1-\delta$:
    \begin{equation}
        (1-\eps) \| A \|_F^2 \leq \| A R \|_F^2 \leq (1+\eps)\| A \|_F^2.
    \end{equation}
 \end{theorem}
\begin{proof}
    See \Cref{appendix:sjlt-fro-norm}.
\end{proof}
Finally, we present a concentration bound for SRHT matrices from \cite{avron2011RandomizedAlgorithms}.
\begin{theorem}[Theorem~8.4 in \cite{avron2011RandomizedAlgorithms}] \label{thm:srtt-estimator}
    Let $A \in \C^{m \times n}$ and suppose $R \sim \SRHT(n,d)$.
    \newline If $d \geq 2 \eps^{-2} \log^2(4 n^2 / \delta) \log(4/\delta)$, then the following holds with probability at least $1-\delta$:
    \begin{equation}
        (1-\eps) \| A \|_F^2 \leq \| A R \|_F^2 \leq (1+\eps) \| A \|_F^2.
    \end{equation}
\end{theorem}
 
These bounds are conservative -- in practice, we find that fewer samples are sufficient for good compression. From a theoretical standpoint, Gaussian sketching operators require fewer samples than SRHT and SJLT. However, SJLT and 
SRHT can be applied faster, leading to a trade-off between speed and accuracy. 
The bounds above present a unifying theory that allows us to extend our HSS construction method, via the Frobenius norm stopping criteria, \cref{eq:stopping_criteria-1}, to all JL sketching operators.

  \Cref{tab:frobenius_table} summarizes the known theoretical results
  in which a lower bound on $d$ -- the number of columns of $R$ -- is provided
  such that the following holds with probability at least $1-\delta$:
	\begin{equation*}
		(1-\eps) \| M \|_F^2 \leq \| M R \|_F^2 \leq (1+\eps) \| M \|_F^2.
	\end{equation*}

\begin{table}[ht]
    \centering
    \begin{tabular}{ |p{3.5cm}||p{8.5cm}| }
        \hline
         Sketching Operator & Frobenius Norm Bound \\
         \hline
         JL Sketch & $(n,d,\delta/(2m),\varepsilon)$-JL matrix (\Cref{thm:froJL}, {\bf new result}) \\
         Gaussian&  $d \geq 20\eps^{-2}\log(2/\delta)$   (\Cref{thm:gaussian-estimator}) \\
         SJLT& $d \geq C \eps^{-2}\log(1/\delta)$ (\Cref{thm:SJLT-F-norm}, {\bf new matrix version}) \\
         SRHT&   $d \geq 2\eps^{-2}\log^2(4n^2/\delta)\log(4/\delta)$ (\Cref{thm:srtt-estimator}) \\     
        \hline
    \end{tabular}
    \caption{Convergence guarantees for Frobenius norm stopping criterion.}
    \label{tab:frobenius_table}
\end{table}
These bounds are known to be conservative, requiring $d$ to be quite large. For example, if we use a Gaussian sketching operator and set our failure probability $\delta = 0.01$ and $\eps = 0.5$ then we have the bound $d \geq 424$ for $(0.5) \| A \|_F^2 \leq \| A R \|_F^2 \leq (1.5) \| A \|_F^2$ to hold with probability at least $0.99$. In practice, it works well to choose $d_0 = 128$ and $\Delta d = 64$ (STRUMPACK library default values). 

Next, we build on our unified framework for JL sketching operators by proving general range-finder bounds. These bounds extend our theoretical foundation by showing that sketches preserve the relevant range information of low-rank blocks, a necessary property for accurate and efficient HSS compression. 
\section{Range-finder Bounds} \label{sec:rangefinderBounds}
In this section, we establish novel bounds for distributional JL sketching operators (\Cref{thm:rangefinderJL}) and SJLT sketching operators (\Cref{range:sjlt}). Additionally, we state existing results for Gaussian sketching operators (\Cref{range:gauss}) and SRHT (\Cref{range:fjlt}). These bounds demonstrate that the sketch $S$ of a matrix $A$ preserves its approximate range, a necessary property for HSS compression. Notably, our results show that JL sketching operators share the same range-preserving property as Gaussian sketching, as established in Theorem 10.8 of~\cite{halko2011}.

Specifically, We prove bounds of the form $\normsq{A - QQ^*A} = \normsq{(I - P_S)A} \leq c_n \sigma_{r+1}$ where $c_n$ is a constant dependent on $n$, $r$ and $d$ such that $0< r \leq d$. Here, $S = AR = Q \Omega$ and $P_S = QQ^*$, we refer to these bounds as \textit{range-finder bounds}. 
While \cite{halko2011} prove range-finder bounds for Gaussian sketching operators and SRHT, we extend these results to sketching operators drawn from a distributional JL family and SJLT. We leverage many of the same tools as \cite{halko2011} to prove our results and restate the existing bounds and present our novel bounds in \cref{thm:rangefinderJL} and \cref{range:sjlt}.

The extension of range-finder theory is necessary for our HSS compression algorithm, \cref{algo::HSScompressNodeAdaptive}, where an interpolative decomposition is computed for the small sketch $S$ of a low rank block which represents the range of the original large low rank block. 

We use the same setup as \cite{halko2011} where we let $A \in \C^{m \times n}$ with SVD $A = U \Sigma V^*$, where $U \in \C^{m \times n}$ and $V \in \C^{n \times n}$ are orthogonal matrices and $\Sigma \in \R^{n \times n}$ is a diagonal matrix of singular values. Let $R \in \R^{ (r+p) \times n}$ with $d = r+p$ where $r$ is our target rank and $p$ is our oversampling parameter, usually set to around 10, and consider the following decomposition:

\begin{equation}\label{def:svd}
A=U
\begin{array}{cccc}
\begin{bmatrix}
\Sigma_1 & \\
& \Sigma_2
\end{bmatrix}
 &
\begin{bmatrix}
V_1^* \\
V_2^*    
\end{bmatrix}
\end{array}.
\end{equation} 
Where $\Sigma_1 \in \C^{r \times r}$ and $\Sigma_2 \in \C^{(n-r) \times (n-r)}$ are diagonal matrices.
Let 
\begin{equation}\label{eq:orthTimesR}
    R_1 := V_1^* R \in \C^{r \times d}\,\,\, , R_2 := V_2^* R \in \C^{(n-r) \times d}.
\end{equation}
The error bound for the range-finder algorithm is dependent on properties of $R_1$ and $R_2$. 

To prove a range-finder bound for distributional JL sketching operators, \Cref{thm:rangefinderJL}, we leverage Theorem 9.1 from \cite{halko2011} and two intermediate lemmas which we state and prove in \Cref{appendix:jl-rangefinder}. The first lemma,
\Cref{lem:sketchnorm}, provides an upper bound for the 2-norm of any JL sketching operator and the second lemma, \Cref{lem:subspace-embed}, provides a lower bound on the smallest singular value  of our JL matrix times a tall-and-skinny full-rank matrix $V$. With these two lemmas and Theorem 9.1 from \cite{halko2011} we now prove our general rangefinder bound.

\begin{theorem}[Distributional JL implies Range-finder Bound] \label{thm:rangefinderJL}
    Suppose $A \in \C^{m \times n}$ is a matrix and let $0 < r < \min(m,n)$ be the target rank. 
    If $R$ is a $(n, d, \frac{\delta}{2 \max(5^{2r}, n)}, \frac{\eps}{12})$-JL sketching operator with $\eps/12, \delta \in (0,1)$ and $d = r+p$ with $p \geq 0$, then the following holds with probability at least $1-\delta$:   
    \begin{equation} \label{eq:range-finder-bound-dist-JL}
    \norm{(I - P_Y)A} \leq  \left(\sqrt{1+ \frac{n(1+\eps)}{(1-\eps)}}\right)\sigma_{r+1}(A),
    \end{equation}
	where $Y = AR = Q \Omega$ with $P_Y = Q Q^\dagger$.
\end{theorem}

\begin{proof}
	From \cref{lem:sketchnorm}, \cref{lem:subspace-embed} and \cref{rem:subspace-embedding-smallest-sv} we have that the following two events happen simultaneously with probability at least $1-\delta$:
	\begin{equation} \label{eq:good-event}
		\|R\| \leq \sqrt{n (1+\eps)} \qquad \text{ and } \qquad \sigma_{\min}^2(RV) \geq (1-\eps) \sigma_{\min}^2(V).
	\end{equation}
	We proceed under the assumption that the events in \eqref{eq:good-event} occur.
	
	Due to \eqref{eq:good-event}, $R_1$ is full rank, and \cref{thm:halkobd} therefore yields
	\begin{equation} 
		\normsq{(I - P_Y)A} \leq  \|\Sigma_2\|^2 + \|\Sigma_2R_2 R_1^\dagger\|^2.
	\end{equation}
	Taking the square root of both sides and using the sub-multiplicativity of the two norm we have
	\begin{equation} \label{eq:projected-norm}
		\norm{(I - P_Y)A} \leq  \sqrt{\|\Sigma_2\|^2 + \|\Sigma_2\|^2 \|R_2\|^2 \|R_1^\dagger\|^2} = \sqrt{\|\Sigma_2\|^2 (1+\|R_2\|^2 \|R_1^\dagger\|^2)}.
	\end{equation}
	To bound $\|R_2\|^2$, note that
	\begin{equation} \label{eq:R_2-bound}
		\|R_2\|^2 = \|V_2^* R\|^2 = \|R\|^2 \leq n (1+\eps),
	\end{equation}
	where the second equality follows from unitary invariance of the two norm, and inequality follows from \eqref{eq:good-event}.
	To bound $\|R_1^\dagger\|^2$, note that
	\begin{equation} \label{eq:R_1-dagger-bound}
		\|R_1^\dagger\|^2 = \frac{1}{\sigma_{\min}^2(R_1)} \leq \frac{1}{(1-\eps)\sigma_{\min}^2(V_1)} = \frac{1}{1-\eps}
	\end{equation}
	where the inequality follows from \eqref{eq:good-event}.
	Combining \eqref{eq:projected-norm}, \eqref{eq:R_2-bound} and \eqref{eq:R_1-dagger-bound} and the fact that $\|\Sigma_2\|=\sigma_{r+1}(A)$ results in the bound \eqref{eq:range-finder-bound-dist-JL}.
\end{proof}

Next we restate a range-finder bound for Gaussian sketching operators from \cite{halko2011}.

\begin{theorem}[Corollary 10.9 from \cite{halko2011}, simplified deviation bounds of Theorem 10.8] \label{range:gauss}
    Suppose that $A \in \C^{m \times n}$ has singular values $\sigma_1 \geq \sigma_2 \geq \sigma_3 \geq \dots$. Choose oversampling parameter $p\geq 4$ and target rank $ r \geq 2$, where $r+p \leq \min(m,n)$. Draw an $R \in \R^{n \times (r+p)}$ with standard Gaussian entries, construct the sketch matrix $Y = A R = Q \Omega$, and let $P_Y = QQ^\dagger$. Then the norm squared approximation error is
    \begin{equation*}
        \norm{(I - P_Y)A} \leq \bigg(1 + 16\sqrt{1 + \frac{r}{p+1}}\bigg) \sigma_{r+1}(A) + \frac{8\sqrt{r+p}}{p+1}\bigg(\sum_{j > r} \sigma_j^2(A) \bigg)^{1/2},
    \end{equation*}
    with probability at least $1 - 3e^{-p}$.
\end{theorem}

The above theorem states that $R$ has standard Gaussian entries. However, we consider a Gaussian sketching operator where the variance of the Gaussian entries is $1/d$ corresponding to scaling all of the standard Gaussian entries by $1/\sqrt{d}$. 
Since the sketch $Y=AR$ is used to construct a projection operator, this scaling cancels out, leaving the projection operator unchanged. Therefore, the result also holds for our scaled Gaussian sketching operators.

Next, we state and prove range-finder bound for SJLT. The proof follows the steps of the proof of \cref{thm:rangefinderJL} but with stronger guarantees since it is restricted to SJLT matrices.

\begin{theorem}\label{range:sjlt} 
    Given matrix $A \in \C^{m \times n}$ and a rank $r < \min(m,n)$. Fix $\eps,\delta \in (0,1)$. If $R \sim \SJLT(n,d,\alpha)$ with $\alpha = \Theta(\log^3(r/\delta)/\eps)$, $d = \Omega(r\log^6(r/\delta)/\eps^2)$, $Y = A R = Q \Omega$ and $P_Y = Q Q^*$ then   
    \begin{equation} \label{eq:rf-bound-sjlt}
       \norm{(I - P_Y)A} 
       \leq  \sigma_{r+1}(A) \sqrt{1 + \frac{1}{(1-\eps)} \max\Big(\frac{e^2 n \alpha}{d}, \log\Big(\frac{2d}{\delta}\Big) - \frac{n\alpha}{d}\Big)}.
    \end{equation}
    with probability $1 - \delta$.
\end{theorem}

To prove this theorem, we leverage Theorem 9.1 from \cite{halko2011} and two lemmas which we state and prove in \Cref{appendix:sjlt-rangefinder}. The first lemma (\Cref{lem:sjlt-norm-bound}) provides an upper bound on the 2-norm of the SJLT sketching operator and the second lemma (\Cref{thm:sjlt-subspace-embedding}) provides a lower bound on the smallest singular value of our SJLT matrix times a tall-and-skinny full-rank matrix $V$. These lemmas, \Cref{lem:sjlt-norm-bound,thm:sjlt-subspace-embedding}, are akin to \cref{lem:sketchnorm,lem:subspace-embed} but with stronger guarantees since they are restricted to SJLT matrices. We can now combine these two results and follow the steps of the proof of \Cref{thm:rangefinderJL} to prove a range-finder bound for SJLT matrices.

\begin{proof}
    We consider the SVD of the matrix $A$ defined in \cref{def:svd} and let $\mu$ be defined as in \cref{lem:sjlt-norm-bound}. 
    From \cref{lem:sjlt-norm-bound}, \cref{thm:sjlt-subspace-embedding} and \cref{rem:subspace-embedding-smallest-sv} we have that the following two events happen simultaneously with probability at least $1-\delta$:
    \begin{equation} \label{eq:sjlt-events}
        \| R \|^2 \leq \max(e^2 \mu, \log(2 d / \delta) - \mu), \qquad \sigma_{\min}^2(R V_1) \geq 1-\varepsilon.
    \end{equation}
    We proceed under the assumption that the events in \eqref{eq:sjlt-events} occur.

    Following steps similar to those in the proof of \cref{thm:rangefinderJL}, we have
    \begin{equation} \label{eq:sjlt-1}
        \norm{(I - P_Y)A} 
        \leq \sqrt{\|\Sigma_2\|^2 (1+\|R_2\|^2 \|R_1^\dagger\|^2)},
    \end{equation}
    where
    \begin{equation} \label{eq:sjlt-2}
        \|R_2\|^2 
        \leq \max(e^2 \mu, \log(2 d / \delta) - \mu) \quad \text{and} \quad \|R_1^\dagger\|^2 \leq \frac{1}{1-\varepsilon}.
    \end{equation}
    Combining \cref{eq:sjlt-1}, \cref{eq:sjlt-2} and the fact that $\|\Sigma_2\|=\sigma_{r+1}(A)$ results in the bound \cref{eq:rf-bound-sjlt}.
\end{proof}    

Finally, we state a range-finder bound for SRHT from \cite{halko2011}.

\begin{theorem}[Theorem 11.2 from \cite{halko2011}] \label{range:fjlt}
Suppose that $A \in \C^{m \times n}$ has singular values $\sigma_1(A) \geq \sigma_2(A) \geq \sigma_3(A) \geq \dots$. Choose oversampling parameter $p\geq 1$ and target rank $ r \geq 1$ such that $r+p \leq \min\{m,n\}$ and 
\begin{equation*}
    4\left[\sqrt{r} + \sqrt{8\log(rn)} \right]^2 \log(r) \leq (r+p) \leq n.
\end{equation*}
Draw an $R \in \R^{n \times (r+p)}$ SRHT, construct the sketch matrix $Y = A R = Q \Omega$, and let $P_Y = Q Q^*$. Then the norm squared approximation error is 
\begin{equation*}
\norm{(I - P_Y)A} \leq \sqrt{1 + 7n/(r+p)} \cdot \sigma_{r+1}(A), 
\end{equation*}
    with failure probability at most $O(r^{-1})$.
\end{theorem}

\begin{remark}
The above result in \cite{halko2011} is stated when the fast transform is a discrete Fourier transform but in this paper we apply the Hadamard transform. The identical result holds for the Hadamard transform by combining the result in \cite[Theorem 1.3]{tropp2011improved} and following the identical steps of the proof with Fourier transform in \cite{halko2011}. 
\end{remark}

In summary, the new foundational theory in this section is \cref{thm:rangefinderJL}, which shows that a projection based on a distributional JL sketching operator achieves good approximation of the range of the original matrix. With similar proof techniques, we prove that the SJLT sketching achieves good range approximation as well (\cref{range:sjlt}). These two new results augment the existing range-finder bounds for the Gaussian sketching operators and SRHT matrices justifying our use of a more general class of sketching operators in our HSS compression algorithm.

In the following sections we discuss our efficient implementation of an SJLT and SRHT sketching routine for HSS construction. We also compare SJLT and SRHT sketching to the existing Gaussian sketching routine. 
We observe that we can achieve faster compression time with similar accuracy when applying SJLT or SRHT sketching over Gaussian sketching.   

\section{Implementation Details of SJLT Sketching} \label{sec:cstheory}

The SJLT matrix is a highly structured random matrix. To leverage this structure we have created an SJLT data structure and custom sketching routines that use the SJLT data structure. Our specialized data structure and sketching routines speed up the HSS compression algorithm by leveraging matrix sparsity and bypassing multiplications.

\subsection{SJLT Data Structure}
An SJLT matrix is a structured sparse matrix whose entries have two possible nonzero values. $R \in \R^{n \times d}$ is an SJLT matrix with $\alpha$ nonzeros in each row with each nonzero drawn from $\{1/\sqrt{\alpha}, -1/\sqrt{\alpha}\}$ with equal probability. We factor out and store the scaling of $1/\sqrt{\alpha}$ and split our matrix into positive and negative components, resulting in $R = 1/\sqrt{\alpha}(B_+ - B_-)$, where the matrices $B_+$ and $B_-$ only have entries in $\{0,1\}$.
Since $B_+$ and $B_-$ are sparse binary matrices we store them in compressed form. We use both compressed row storage (CRS) and compressed column storage (CCS) \cite{barrett1994templates} where we store pointers to the start of each row (CRS) or column (CCS), and the column or row indices of the nonzero entries respectively. Since our matrices are binary the nonzero values are always one so we do not need to store the values at these nonzero positions. We store the binary matrices in both compressed row and column storage to optimize the caching efficiency when computing $A R$ and $A^* R$. Below we provide an example of our data structure and decomposition.

\begin{equation*}
    R =\frac{1}{\sqrt{2}}\begin{bmatrix}
        1 & 0 & -1  \\ 
        0 & -1& -1  \\
        1 & -1 & 0 \\
        1 & 0 & 1 \\
    \end{bmatrix} = \frac{1}{\sqrt{2}}\left(\begin{bmatrix}
        1 & 0 & 0  \\ 
        0 & 0& 0  \\
        1 & 0 & 0 \\
        1 & 0 & 1 \\
    \end{bmatrix} - \begin{bmatrix}
        0 & 0 & 1  \\ 
        0 & 1& 1  \\
        0 & 1 & 0 \\
        0 & 0 & 0 \\
    \end{bmatrix} \right) = \frac{1}{\sqrt{2}}(B_+ - B_-)
\end{equation*}

\begin{equation*}
    R : \begin{cases}
    s = \frac{1}{\sqrt{2}} \\
    B_+  \text{ stored in CRS and CCS without value arrays}\\
    B_- \text{ stored in CRS and CCS without value arrays}
    \end{cases}
\end{equation*}

This specialized SJLT data structure for binary matrices allows us to avoid doing any multiplications in our algorithm because all multiplications would be by the number one. Instead, we only need to index and sum relevant values. Then after our matrix multiplication is complete we can scale all entries in our resulting sketch.
Additionally, storing the SJLT as a sum of two binary compressed matrices requires less space than as a single compressed matrix which additionally includes the value at each nonzero position when the number of nonzero entries per row is strictly greater than one. Finally, the SJLT data structure is well integrated in the HSS compression algorithm allowing for fast and efficient sketching operator adaptivity. 

\subsection{Adaptive SJLT Sketching}
In the HSS compression algorithm we use adaptive SJLT sketching where the user inputs the number of non-zeros per row for each sketching operator. For example if the user selects S(4) then initially an SJLT matrix with 4 nonzeros per row and $d0$ columns is constructed. If the sketch of $A$ is insufficient for the HSS compression to succeed then we must extend the SJLT matrix to produce a more accurate sketch. We append an additional $\Delta d$ columns with $4$ nonzeros per row until our sketch is accurate enough for the HSS compression to succeed.
we efficiently update our SJLT data structure by adjusting the scaling factor and appending binary columns to the existing SJLT matrix. 

\subsection{\texorpdfstring{Efficiently Computing $AR$ and $A^*R$ For Dense $A$}{Computing A times R}}
In the C++ STRUMPACK library a dense matrix $A$ is stored in column major ordering, so to leverage caching we would like to access our large dense matrix $A$ column by column. We implement the sketching of $A$, $AR$ by considering the outer product formulation. 
\begin{equation*}
A  R = 
\begin{bmatrix}
    \vert & \vert & & \vert\\
    A_{:1}   & A_{:2} & \dots &  A_{:n}\\
    \vert & \vert & & \vert
\end{bmatrix}
\begin{bmatrix}
    \horzbar & R_{1:} & \horzbar \\
    \horzbar & R_{2:} & \horzbar \\
             & \vdots    &          \\
    \horzbar & R_{n:} & \horzbar
  \end{bmatrix} = \sum_{i = 1}^n A_{:i} R_{i:}.
\end{equation*}

First, we initialize a zero matrix which will store our solution and factor out the scaling factor $s$ from our matrix $R$. We iterate over each row of $R$ in compressed row storage. For each row $R_{i}$ if entry $ij$ is $1$, corresponding to a nonzero entry in $B_+$, then we add column $A_i$ to column $j$ of our solution matrix. If entry $ij$ is $-1$, corresponding to a nonzero entry in $B_-$, then we subtract column $A_i$ from column $j$ of our solution matrix. This algorithm accesses each column of $A$ exactly once and uses the row $R_{i:}$ to add or subtract it at different positions in our solution matrix. Since our solution matrix is much smaller than the matrix $A$ this trade-off of leveraging caching of $A$ while accessing many entries in our solution matrix is advantageous. Finally, we scale the resulting matrix to complete our sketching routine. 
 
In the HSS compression algorithm, we compute the sketch for both the rows and the columns of our input dense matrix $A$. 
This means that in our STRUMPACK implementation in addition to computing $A R$ we must also compute  $A^* R$. 
Since we only store $A$ and it is stored in column major format we leverage an inner product formulation for this sketching routine. Where

\begin{equation*}
A^*  R = \left(
\begin{bmatrix}
    \vert & \vert & & \vert\\
    A_{:1}  & A_{:2} & \dots &  A_{:n}\\
    \vert & \vert & & \vert
\end{bmatrix}\right )^*
\begin{bmatrix}
    \vert & \vert & & \vert\\
    R_{:1}   & R_{:2} & \dots &  R_{:k}\\
    \vert & \vert & & \vert
\end{bmatrix}.
\end{equation*}

So to compute this sketch we iterate over each column of $A$ which allows us to leverage caching. Then we take an inner product between the complex conjugate of this column of $A$ and each column of $R$ which we do by using compressed column storage, ignoring the scaling factor. This corresponds to entries in our resulting matrix. Each entry in the resulting matrix is a scaled sum of either $+1,-1$ or $0$ times each entry of the column of $A$ so no multiplication is necessary in this computation. Finally, we can scale the entire result matrix afterwards. 

\subsection{Distributed Parallel Implementation For Dense A}

In addition to providing a shared parallel implementation in STRUMPACK we also provide a distributed memory parallel implementation of the SJLT sketching operators for symmetric matrices. Since the SJLT sketching operators are efficient to store we are able to duplicate the entire sketching operator on each MPI process with low memory overhead. Once we have duplicated the sketch on each process we can use the serial SJLT multiplication routines described in the previous section. We store the operator $A$ in 1D block row format allowing us to efficiently parallelize the multiplication. This storage is in contrast to the Gaussian case which leverages a 2D block cyclic format for both the dense operator and the Gaussian random matrix. We observe a much greater speedup over the Gaussian sketching operators in the distributed parallel setting. 
\section{Implementation Details of SRHT Sketching} \label{sec:srht_implement}
Recall, the sketch matrix $R \sim \SRHT(n,d)$, is given by $R = DHP$.
HSS compression of a matrix  $A \in \mathbb{C}^{m \times n}$ using an SRHT sketch raises two main issues:
\begin{enumerate}
\item An efficient sketch of $A$ when the number of columns $n$, is not a power of $2$.
\item Efficient sketches of the diagonal blocks in lines 18 and 20, of Algorithm \ref{algo::HSScompressNodeAdaptive}.
\end{enumerate}
Matrices $D$ and $P$ are stored as vectors and $H$, the normalized Hadamard transform, is not stored. 

\subsection{Efficient Sketch Of $A$}\label{sec:Hadamard-sketch}
Let $A \in \mathbb{C}^{m \times n}$. The cost of the sketch $S = ADHP$ is dominated by the Hadamard transform. When $n$ is not a power of $2$, we break the Hadamard transform into two smaller Hadamard transforms. Let
\[
k = 2^{\lfloor{\log_2 n}\rfloor},\quad \mbox{with} \quad n = k + r,
\]
and $\bze$ a zero matrix of size $m \times (k-r)$. Then,
\begin{eqnarray}    \label{eqn:had1}
    AH &=& \begin{bmatrix} A_{m,k} & A_{m, r} & \bze_{m,k-r}\end{bmatrix}H_{2k},  \nonumber \\ 
    &=&\begin{bmatrix}A_{m,k} & \Tilde{A}\end{bmatrix}\begin{bmatrix}
        H_k & H_k\\H_k & -H_k
    \end{bmatrix}, \nonumber \\ 
    &=& \begin{bmatrix}
        A_{m,k}H_k + \Tilde{A}H_k & A_{m,k}H_k - \Tilde{A}H_k
    \end{bmatrix},
\end{eqnarray}
where $\Tilde{A} = \begin{bmatrix}
    A_{m, r} & \bze_{m,k-r}
\end{bmatrix} \in \mathbb{R}^{m,k}$. Let 
\[
p = 2^{\lceil{\log_2 r}\rceil},\quad \mbox{and} \quad q = \frac{k}{p},
\]
and $\hat{A}_{m,p} = \begin{bmatrix}
    A_{m,r} & \bze_{m,p-r}
\end{bmatrix}$. Then
\begin{eqnarray}
    \Tilde{A}H_k &=& \begin{bmatrix}
        A_{m,r} & \bze_{m,p-r} & \bze_{m,k-p}\end{bmatrix}\begin{bmatrix}
            H_p & H_p & \cdots & H_p \\
            \cdot & \cdot & \cdots & \cdot \\
            \vdots & \vdots & \cdots & \vdots \\
            H_p & \cdot & \cdots & \cdot
        \end{bmatrix}\\
        &=&\left[\hat{A}H_p \quad \hat{A}H_p \quad \cdots \quad \hat{A}H_p\right]_{m \times q} \\
        \label{eqn:hadiden}
        &=&\hat{A}H_p\left[I \quad I \quad \cdots \quad I\right]_{m \times q}.
\end{eqnarray}
Then from equations \eqref{eqn:had1} and \eqref{eqn:hadiden}, we have
\begin{equation}
    AH = \begin{bmatrix}
        A_{m,k}H_k + \hat{A}H_p\begin{bmatrix}
            I & \cdots & I
        \end{bmatrix} & A_{m,k}H_k - \hat{A}H_p\begin{bmatrix}
            I & \cdots & I
        \end{bmatrix}
    \end{bmatrix}.
\end{equation}
Thus, the Hadamard transform of dimension $2k$ is replaced by two transforms of dimensions $k$ and $p$.

\subsection{Sketching Diagonal Blocks}
In lines 18 and 20 of Algorithm \ref{algo::HSScompressNodeAdaptive}, access to portions of the sketch matrix $R$ is required to compute the sketch of the diagonal blocks at level $\tau$. The parts of $R$ required can be computed (i) as needed (just in time), or (ii) all of $R$ can be computed beforehand. 

Here, we derive a formula for computing $R = DHP$, element-wise. The cost of this computation is $O(md)$, for a matrix $A_{m \times n}$ and sketch dimension $d$. Let
\begin{equation}
\label{eqn:srhtDHPmats}
    D = \begin{bmatrix} d_1 & & & \\  & d_2 & & \\ & & \ddots & \\ & & & d_n \end{bmatrix}, \quad H = H_{\nu}, \quad P = c  \begin{bmatrix} | & | & & |\\ P_{:1} & P_{:2} & \cdots & P_{:d}\\  | & | &  & | \end{bmatrix}
\end{equation}
where $d_i = \pm 1$,  $\nu = 2^{\lceil \log_2 n\rceil}$,
 $c = \sqrt{ \frac{\nu}{d} }$  and $P_{:i} = {\bf e}_{\mu}$, i.e. the $\mu$th column of $I_{\mu}$.
Define 
\[
\Tilde{D} = \begin{bmatrix}D_{n, n} & \Tilde{\bze}_{n, \nu - n}\end{bmatrix} \quad \mbox{and} \quad H_{\nu} = \begin{bmatrix} \Hat{H}_{n, \nu}\\ \Tilde{H}_{\nu - n, \nu} \end{bmatrix}.
\]
Then,
\begin{equation}
    \label{eqn:DtildeH}
    DH \equiv \Tilde{D}H = \begin{bmatrix}D_{n, n} & \bze_{n,\nu - n}\end{bmatrix}\begin{bmatrix} \Hat{H}_{n,\nu}\\ \Tilde{H}_{\nu - n, \nu} \end{bmatrix} = D\Hat{H}.
\end{equation}
Hence, 
\begin{eqnarray}
    \label{eqn:DHP}
    \nonumber
    R = DHP &=& c D\Hat{H}\begin{bmatrix} | & | & & |\\ P_{:1} & P_{:2} & \cdots & P_{:d}\\  | & | &  & | \end{bmatrix}\\
    &=& c \begin{bmatrix} | & | & & |\\ {\bf dv}\odot \Hat{H}P_{:1} & {\bf dv}\odot \Hat{H}P_{:2} & \cdots & {\bf dv}\odot \Hat{H}P_{:d}\\  | & | &  & | \end{bmatrix},
\end{eqnarray}
where ${\bf dv} = \begin{bmatrix}d_1\\d_1\\ \vdots \\d_n\end{bmatrix}$ and $\odot$ is the Hadamard product. The $j$th column of $R$,
\[
R_{:j} = c ~{\bf dv} \odot \Hat{H} {\bf e}_{\mu} = c~{\bf dv} \odot \Hat{H}_{:\mu},
\]
and 
\[
\left(H_{\nu}\right)_{i,j} = \frac{(-1)^{i\cdot j}}{\sqrt{\nu}},
\]
is an element-wise definition of the Hadamard transform, where $i \cdot j$ is the dot-product of the base 2 representations of $i$ and $j$.  Then,
\begin{equation}
    \label{eqn:RsrHt}
    R_{i,j} = \frac{d_i}{\sqrt{d}} (-1)^{i\cdot \mu}.
\end{equation}
\section{Experimental Results} \label{sec:experiment}

\subsection{Test Problems}
\label{subsec:setup}
In this section, we compare our HSS construction algorithm in both the serial and parallel settings. 
In the serial setting, we use Gaussian sketching operators, SJLT sketching operators with different numbers of nonzero entries per row, and SRHT sketching operators. 
In the parallel distributed memory setting we only use Gaussian sketching operators and SJLT sketching operators with variable nonzeros. 
We did not implement a parallel distributed version of SRHT due to complexity of handling non-power of two dimension for $H$, and because SRHT was less competitive compared to SJLT.
We observe that the accuracy of the construction is comparable between Gaussian, SJLT with $\alpha >  1$ and SRHT sketching operators for most problems while SJLT and SRHT sketching can often be computed faster. 

We consider the following test cases:
\begin{enumerate}
    \item A covariance matrix (Cov.), using an exponential kernel 
    \begin{equation}
        G_{ij} = \exp{\left( - \frac{\| x_i - x_j \|_2}{\lambda} \right) }
    \end{equation}
    with $x_i, x_j \in [0, 1]^3$ and $\lambda = .2$ the correlation length. We use a structured hexahedral finite element mesh, discretized using the MFEM finite element library. The matrix is reordered using recursive bisection, which also defines the HSS cluster tree. 
    \item A Toeplitz matrix describing a 1D kinetic energy quantum chemistry problem~\cite{jones2016efficient} (QChem Toeplitz), given by 
    \begin{equation}
        T_{ij} = \begin{cases}
          \pi^2 / \left( 6 d^2 \right) & \text{if} \,\, i = j \\
          \left(-1\right)^{i-j} / \left( d^2\left(i-j\right)^2\right) & \text{else}
        \end{cases}
    \end{equation}
    where $d = 0.1$ is a discretization parameter (grid spacing). 
    The matrix $T$ is fairly ill-conditioned and has small HSS ranks which grow slowly with the dimension of $T$.
    \item 
        The impedance matrix $Z$~\cite{liu2016hss} (Scatt. wave):
        \begin{equation}
            Z_{ij} = \frac{k \eta_0}{4} \int_S{t_i(\rho) \int_S{b_j(\rho') H_0^{(2)}(k|\rho - \rho'|) ds' }ds }
    \end{equation}
    where $k = 2 \pi / \lambda_0$ is the wave number, $\lambda_0$ denotes the free-space wavelength, $\eta_0$ is the intrinsic impedance of free space, and $H_0^{(2)}$ is the zeroth-order Hankel function of the second kind. The surface $S$ is a perfectly electrically conducting circle (2D) residing in free space. This circle is discretized using $n$ line segments, and we use delta functions located at the center of each line segment for $t_i$, and constant functions supported on the line segments for $b_j$. The inner integral is evaluated with a simple quadrature rule with $4$ quadrature points. For the experiments, we vary $n$ and adjust $\lambda_0$ accordingly such that the number of points per wavelength is approximately $24$.
    \item The root front from a sparse multifrontal solver~\cite{ghysels2017-ipdps} (3D Poisson front). The multifrontal solver is applied to a linear system resulting from the second order central finite difference discretization of the 3D Poisson equation on a $k^3$ grid, with zero Dirichlet boundary conditions. The sparse solver uses a nested dissection ordering, and the root vertex separator, a $k \times k$ plane in the grid, corresponds to the dense $k^2 \times k^2$ root frontal matrix.
\end{enumerate}

\subsection{Test Machine and Software}

All experiments are run on the Perlmutter system at NERSC, LBNL. Each Perlmutter (CPU) node has 2 AMD EPYC 7763 CPUs with 64 cores each and 512GB of DDR4 memory.
The code is compiled with GCC 12.3.0, and the BLAS/LAPACK routines are from OpenBLAS 0.3.26. In the distributed parallel setting we test with 8, 16 and 32 MPI ranks on 1, 2 and 4 Perlmutter nodes respectively. 

The HSS construction algorithm with different sketching options, and the test cases are implemented in the STRUMPACK library, and are available at \url{https://github.com/pghysels/STRUMPACK/}.

\subsection{Results}

\subsubsection{Sequential Results with SJLT and SRHT}

\begin{table}[h]
    \centering
    \resizebox{\textwidth}{!}{
    \begin{tabular}{|l|r|r|rrrrrr|rrrrrr|r|}
          \hline
                &     &            & \multicolumn{6}{c|}{HSS sketching time (sec)} & \multicolumn{6}{c|}{Total HSS construction time (sec)} & comp \\ 
         Matrix & $\epsr$ & $n$ & G & S(1) & S(2) & S(4) & S(8) & H & G & S(1) & S(2) & S(4) & S(8) & H & (\%)\\
         \hline
         \hline
         \multirow{9}{*}{Cov.}
         & \multirow{3}{*}{$10^{-2}$}
& $10^3$ & 0.00867 &   0 & 0.000667 & 0.000667 & 0.00167 & 0.0153 & 0.045 & 0.034 & 0.035 & 0.037 & 0.038 & 0.049 &  46.1 \\ 
& & $20^3$ & 0.866 & 0.051 & 0.0927 & 0.168 & 0.317 & 2.58 & 1.43 & 0.569 & 0.611 & 0.662 & 0.836 & 3.1 &   7.4 \\ 
& & $30^3$ & 14.4 & 1.15 & 2.11 & 3.61 & 6.8 & 13.7 &  17 & 3.37 & 4.28 & 5.93 & 9.13 & 15.9 &   2.2 \\ 
        \cline{2-16}
            & \multirow{3}{*}{$10^{-4}$} 
& $10^3$ & 0.011 &   0 & 0.000667 & 0.001 & 0.003 & 0.017 & 0.079 & 0.042 & 0.063 & 0.066 & 0.068 & 0.078 &  58.0 \\ 
& & $20^3$ & 2.35 & 0.23 & 0.381 & 0.71 & 1.28 & 2.74 & 5.87 & 3.92 & 3.99 & 4.49 & 4.88 & 6.3 &  19.2 \\ 
& & $30^3$ & 72.1 & 6.81 & 11.9 & 21.5 & 40.1 & 15.5 & 121 & 58.8 & 64.4 &  73 & 91.1 & 67.8 &  11.2 \\ 
        \cline{2-16}
            & \multirow{3}{*}{$10^{-6}$}
& $10^3$ & 0.014 &   0 & 0.000667 & 0.00233 & 0.00533 & 0.0187 & 0.111 & 0.052 & 0.089 & 0.113 & 0.116 & 0.105 &  73.7 \\ 
& & $20^3$ & 3.46 & 0.346 & 0.559 & 1.02 & 1.88 & 2.84 &  11 & 8.82 & 8.6 & 9.16 & 9.82 & 10.5 &  30.5 \\ 
& & $30^3$ & 95.9 & 9.02 & 15.4 & 28.5 & 54.3 & 16.5 & 201 & 116 & 123 & 137 & 162 & 133 &  18.0 \\ 
        \hline
         \hline
         \multirow{9}{*}{\shortstack[l]{QChem\\ Toeplitz}}
                 & \multirow{3}{*}{$10^{-2}$} 
& 10K & 0.783 & 0.0307 & 0.045 & 0.0793 & 0.162 & 1.85 & 0.9 & 0.113 & 0.126 & 0.16 & 0.249 & 1.93 &   1.7 \\ 
& & 20K & 3.16 & 0.142 & 0.229 & 0.452 & 0.73 & 8.51 & 3.39 & 0.307 & 0.392 & 0.613 & 0.883 & 8.67 &   0.9 \\ 
& & 40K & 12.7 & 0.769 & 1.49 & 2.61 & 4.7 & 77.6 & 13.2 & 1.1 & 1.82 & 2.92 & 5.02 & 77.9 &   0.4 \\ 
        \cline{2-16}
            & \multirow{3}{*}{$10^{-4}$} 
& 10K & 0.788 & 0.029 & 0.045 & 0.086 & 0.155 & 1.86 & 0.913 & 0.121 & 0.135 & 0.179 & 0.257 & 1.95 &   1.9 \\ 
& & 20K & 3.17 & 0.145 & 0.23 & 0.44 & 0.729 & 8.51 & 3.42 & 0.331 & 0.412 & 0.627 & 0.903 & 8.68 &   0.9 \\ 
& & 40K & 12.6 & 0.774 & 1.5 & 2.57 & 4.72 & 77.8 & 13.1 & 1.15 & 1.88 & 2.95 & 5.06 & 78.2 &   0.5 \\ 
        \cline{2-16}
            & \multirow{3}{*}{$10^{-6}$} 
& 10K & 0.788 & 0.029 & 0.0463 & 0.083 & 0.162 & 1.86 & 0.931 & 0.138 & 0.155 & 0.195 & 0.275 & 1.96 &   2.0 \\ 
& & 20K & 3.18 & 0.141 & 0.238 & 0.432 & 0.753 & 8.52 & 3.46 & 0.357 & 0.458 & 0.665 & 1.01 & 8.74 &   1.0 \\ 
& & 40K & 12.6 & 0.77 & 1.49 & 2.61 & 4.71 & 77.6 & 13.2 & 1.21 & 1.94 & 3.06 & 5.16 &  78 &   0.5 \\ 
         \hline
         \hline
         \multirow{9}{*}{\shortstack[l]{Scatt.\\ wave}}
         & \multirow{3}{*}{$10^{-2}$} 
& 5K & 1.98 & 0.228 & 0.265 & 0.409 & 0.687 & 1.31 & 2.21 & 0.436 & 0.472 & 0.617 & 0.903 & 1.89 &   4.7 \\ 
& & 10K &  12 & 1.7 & 2.11 & 3.29 & 5.54 & 6.05 & 12.8 & 2.45 & 2.86 & 4.06 & 6.34 & 7.41 &   2.7 \\ 
& & 20K & 81.7 & 15.7 & 18.6 & 27.7 & 47.6 & 44.2 & 85.1 & 18.6 &  22 & 31.1 & 49.2 & 47.5 &   1.6 \\ 
        \cline{2-16}
       & \multirow{3}{*}{$10^{-4}$}  
& 5K & 1.97 & 0.233 & 0.263 & 0.406 & 0.677 & 1.31 & 2.23 & 0.464 & 0.493 & 0.636 & 0.906 & 1.97 &   5.1 \\ 
& & 10K & 12.1 & 1.71 & 2.12 & 3.29 & 5.53 & 6.05 &  13 & 2.52 & 2.94 & 4.11 & 6.34 & 7.55 &   2.9 \\ 
& & 20K & 89.9 &  17 & 20.9 & 31.1 & 51.6 & 44.2 &  94 & 20.9 & 24.8 &  35 & 55.4 & 47.8 &   1.8 \\ 
        \cline{2-16}
       & \multirow{3}{*}{$10^{-6}$}
& 5K & 1.96 & 0.228 & 0.262 & 0.402 & 0.682 & 1.31 & 2.28 & 0.518 & 0.558 & 0.702 & 0.988 & 2.1 &   5.4 \\ 
& & 10K &  12 & 1.71 & 2.12 & 3.3 & 5.53 & 6.04 &  13 & 2.62 & 3.06 & 4.26 & 6.52 & 7.58 &   3.1 \\ 
& & 20K & 81.6 &  17 & 20.9 & 31.3 & 51.3 & 44.3 & 85.7 & 21.1 &  25 & 35.4 & 55.6 & 47.9 &   1.9 \\ 
         \hline
         \hline
         \multirow{9}{*}{\shortstack[l]{3D \\Poisson\\ front}}
         & \multirow{3}{*}{$10^{-2}$} 
& $100^2$ & 1.078 & 0.1547 & 0.092 & 0.17 & 0.323 & 2.516 & 1.467 & 0.839 & 0.438 & 0.519 & 0.674 & 5.381 &   3.9 \\ 
& & $150^2$ & 9.863 & 2.132 & 1.536 & 2.302 & 4.31 & 9.792 & 11.65 & 7.177 & 3.382 & 3.92 & 5.939 & 17.81 &   2.4 \\ 
& & $200^2$ & 35.95 & 10.23 & 9.584 & 11.65 & 20.53 & 79.7 & 39.61 & 24.96 & 13.75 & 15.25 & 24.17 & 93.89 &   1.3 \\ 
                 \cline{2-16}
       & \multirow{3}{*}{$10^{-4}$}  
& $100^2$ & 1.944 & 0.2873 & 0.2643 & 0.4893 & 0.9377 & 2.51 & 3.038 & 2.43 & 1.417 & 1.613 & 2.076 & 6.71 &   6.5 \\ 
& & $150^2$ & 18.72 & 3.259 & 2.885 & 5.237 & 9.977 & 9.794 & 24.48 & 18.3 & 8.963 & 11.14 & 15.92 & 21.68 &   4.2 \\ 
& & $200^2$ & 82.36 & 16.82 & 18.95 & 28.15 & 49.74 & 79.91 & 96.88 & 66.97 & 35.28 & 43.57 & 65.36 & 102.3 &   2.5 \\ 
                 \cline{2-16}
       & \multirow{3}{*}{$10^{-6}$}
 & $100^2$ & 2.806 & 0.3503 & 0.4007 & 0.6767 & 1.401 & 2.516 & 4.949 & 3.822 & 2.613 & 2.703 & 3.595 & 8.178 &   9.3 \\ 
& & $150^2$ & 26.08 & 3.825 & 4.158 & 7.483 & 14.02 & 9.788 & 37.58 & 27.98 & 16.74 & 19.18 & 25.91 & 26.47 &   6.2 \\
& & $200^2$ & 115 & 18.73 & 26.15 & 38.74 & 70.21 & 80.11 & 145.6 & 96.87 & 60.15 & 71.11 & 103.5 & 112.7 &   3.9 \\ 
         \hline
    \end{tabular}
    }
    \caption{Serial times for HSS compression, and sketching time. $G$ refers to sketching with a Gaussian sketching operator, $S(\alpha)$ refers to sketching with an SJLT matrix (block construction) with $\alpha$ nonzeros per row, and $H$ refers to SRHT sketching.}
    \label{tab:experiments}
\end{table}

All the experiments for the Gaussian and SJLT begin with $d_0 = 128$ and $\Delta d = 64$ for the adaptive HSS construction. These are the default values set in STRUMPACK.
In the case of SRHT sketching, we found that our incremental adaptive strategy may not guarantee the desired accuracy. This may be due to the following reason:
Recall in Equation (\ref{eqn:had1})
we extend the dimension $n$ to the next power-of-two in order to use fast Hadamard transform. Yet, the sampled columns using $P$ are not of the original matrix $ADH$, but are the selected sums of certain columns.
In our experiments, we observed that for the covariance and QChem Toeplitz matrices, the default setting $\{d_0=128, \Delta d = 64\}$ delivers good accuracy. However, for the 
scattering wave and the 3D Poisson front problems, we cannot use the adaptive scheme. In each case, we manually tried to increase $d$ to perform the one-shot sampling and empirically found that $d=576$ suffices for the scattering wave problem, and $d=1856$ suffices for the 3D Poisson problem.
It remains an open problem to handle the non-power-of-two case, both theoretically and practically.
The covariance matrix, Toeplitz matrix and Poisson front are symmetric, so for these cases we only sample $AR$ and not $A^*R$.
The HSS leaf size is set to $256$. In the experiments, we vary the relative HSS compression tolerance $\epsr$, and keep the absolute compression tolerance at $\epsa = 10^{-8}$. Random numbers are generated using the C++11 \texttt{std::minstd\_rand} linear congruential engine.

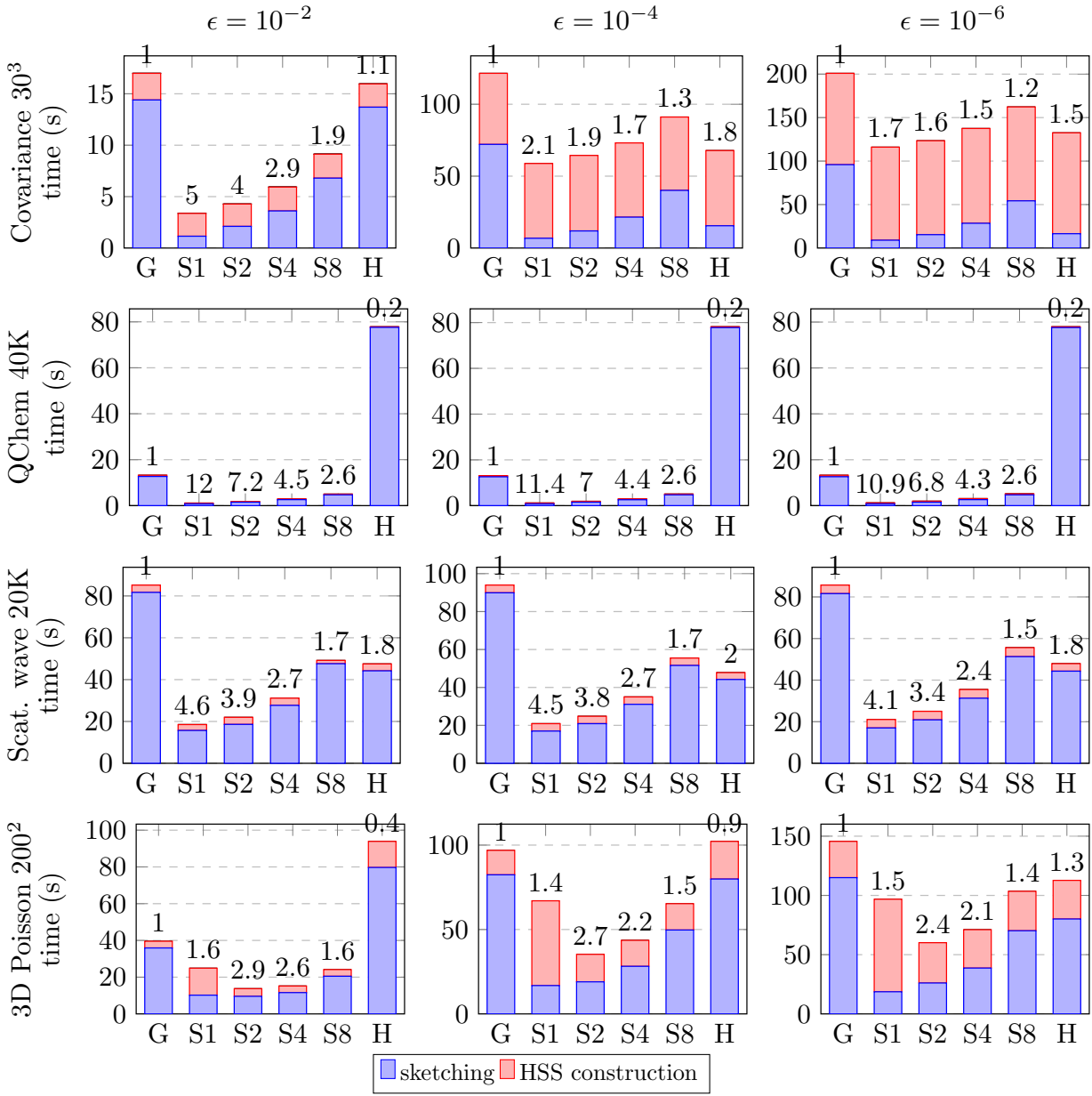
\begin{figure}
  \centering
  \resizebox{\textwidth}{!}{
  \begin{tikzpicture}
  \begin{axis}[
    width=5cm,height=4cm,
    ybar stacked, ymin=0,
    title={$\epsilon=10^{-2}$},
    ylabel={\shortstack[c]{Covariance $30^3$\\ time (s)}}, 
    symbolic x coords={G,S1,S2,S4,S8,H}, xtick=data, 
    ymajorgrids=true, grid style=dashed,
    legend entries={sketching, HSS construction},
    legend to name=times_legend,legend columns=3]
    \addplot+ [ybar] coordinates {
(G, 14.4)
 (S1, 1.15)
 (S2, 2.11)
 (S4, 3.61)
 (S8, 6.8)
 (H, 13.7)
    };
    \addplot+ [ybar] coordinates {
(G, 2.59)
 (S1, 2.22)
 (S2, 2.17)
 (S4, 2.32)
 (S8, 2.33)
 (H, 2.26)
    };
    \addplot+ [ybar] coordinates {
        (G,  0)  (S1, 0) (S2, 0) (S4, 0) (S8, 0) (H, 0)
    };
    \addplot+ [ybar, nodes near coords, point meta=17 / y, nodes near coords align={vertical}, /pgf/number format/.cd,fixed,precision=1] coordinates {
        (G,  0.00001)
        (S1, 0.00001)
        (S2, 0.00001)
        (S4, 0.00001)
        (S8, 0.00001)
        (H,  0.00001)
    };
  \end{axis}
  \end{tikzpicture}
  \begin{tikzpicture}
  \begin{axis}[
    ybar stacked, ymin=0,
    width=5cm,height=4cm,
    title={$\epsilon=10^{-4}$},
    symbolic x coords={G,S1,S2,S4,S8,H}, xtick=data, ymajorgrids=true, grid style=dashed]
    \addplot+ [ybar] coordinates {
(G, 72.1)
 (S1, 6.81)
 (S2, 11.9)
 (S4, 21.5)
 (S8, 40.1)
 (H, 15.5)
    };
    \addplot+ [ybar] coordinates {
(G, 49.4)
 (S1,  52)
 (S2, 52.5)
 (S4, 51.6)
 (S8,  51)
 (H, 52.3)
    };
    \addplot+ [ybar] coordinates {
        (G,  0)  (S1, 0) (S2, 0) (S4, 0) (S8, 0) (H, 0)
    };
    \addplot+ [ybar, nodes near coords, point meta=121 / y, nodes near coords align={vertical}, /pgf/number format/.cd,fixed,precision=1] coordinates {
        (G,  0.00001)
        (S1, 0.00001)
        (S2, 0.00001)
        (S4, 0.00001)
        (S8, 0.00001)
        (H, 0.00001)
    };
  \end{axis}
  \end{tikzpicture}
  \begin{tikzpicture}
  \begin{axis}[
    ybar stacked, ymin=0,
    width=5cm,height=4cm,
    title={$\epsilon=10^{-6}$},
    symbolic x coords={G,S1,S2,S4,S8,H}, xtick=data, ymajorgrids=true, grid style=dashed]
    \addplot+ [ybar] coordinates {
(G, 95.9)
 (S1, 9.02)
 (S2, 15.4)
 (S4, 28.5)
 (S8, 54.3)
 (H, 16.5)
    };
    \addplot+ [ybar] coordinates {
(G, 105)
 (S1, 107)
 (S2, 108)
 (S4, 109)
 (S8, 108)
 (H, 116)
    };
    \addplot+ [ybar] coordinates {
        (G,  0)  (S1, 0) (S2, 0) (S4, 0) (S8, 0) (H, 0)
    };
    \addplot+ [ybar, nodes near coords, point meta=201 / y, nodes near coords align={vertical}, /pgf/number format/.cd,fixed,precision=1] coordinates {
        (G,  0.00001)
        (S1, 0.00001)
        (S2, 0.00001)
        (S4, 0.00001)
        (S8, 0.00001)
        (H,  0.00001)
    };
  \end{axis}
  \end{tikzpicture}}

  \resizebox{\textwidth}{!}{
  \begin{tikzpicture}
  \begin{axis}[
    width=5cm,height=4cm,
    ybar stacked, ymin=0,
    ylabel={\shortstack[c]{QChem $40$K\\ time (s)}}, 
    symbolic x coords={G,S1,S2,S4,S8,H}, xtick=data, 
    ymajorgrids=true, grid style=dashed]
    \addplot+ [ybar] coordinates {
(G, 12.7)
 (S1, 0.769)
 (S2, 1.49)
 (S4, 2.61)
 (S8, 4.7)
 (H, 77.6)
    };
    \addplot+ [ybar] coordinates {
(G, 0.544)
 (S1, 0.332)
 (S2, 0.332)
 (S4, 0.314)
 (S8, 0.32)
 (H, 0.274)
    };
    \addplot+ [ybar] coordinates {
        (G,  0)  (S1, 0) (S2, 0) (S4, 0) (S8, 0) (H, 0)
    };
    \addplot+ [ybar, nodes near coords, point meta=13.2 / y, nodes near coords align={vertical}, /pgf/number format/.cd,fixed,precision=1] coordinates {
        (G,  0.0000001)
        (S1, 0.0000001)
        (S2, 0.0000001)
        (S4, 0.0000001)
        (S8, 0.0000001)
        (H, 0.0000001)
   };
  \end{axis}
  \end{tikzpicture}
  \begin{tikzpicture}
  \begin{axis}[
    ybar stacked,
    width=5cm,height=4cm, ymin=0,
    symbolic x coords={G,S1,S2,S4,S8,H}, xtick=data, ymajorgrids=true, grid style=dashed]
    \addplot+ [ybar] coordinates {
(G, 12.6)
 (S1, 0.774)
 (S2, 1.5)
 (S4, 2.57)
 (S8, 4.72)
 (H, 77.8)
    };
    \addplot+ [ybar] coordinates {
(G, 0.5)
 (S1, 0.378)
 (S2, 0.379)
 (S4, 0.383)
 (S8, 0.347)
 (H, 0.393)
    };
    \addplot+ [ybar] coordinates {
        (G,  0)  (S1, 0) (S2, 0) (S4, 0) (S8, 0) (H, 0)
    };
    \addplot+ [ybar, nodes near coords, point meta=13.1 / y, nodes near coords align={vertical}, /pgf/number format/.cd,fixed,precision=1] coordinates {
        (G,  0.00001)
        (S1, 0.00001)
        (S2, 0.00001)
        (S4, 0.00001)
        (S8, 0.00001)
        (H,  0.00001)
    };
  \end{axis}
  \end{tikzpicture}
  \begin{tikzpicture}
  \begin{axis}[
    ybar stacked,
    width=5cm,height=4cm, ymin=0,
    symbolic x coords={G,S1,S2,S4,S8,H}, xtick=data, ymajorgrids=true, grid style=dashed]
    \addplot+ [ybar] coordinates {
(G, 12.6)
 (S1, 0.77)
 (S2, 1.49)
 (S4, 2.61)
 (S8, 4.71)
 (H, 77.6)
    };
    \addplot+ [ybar] coordinates {
(G, 0.649)
 (S1, 0.443)
 (S2, 0.448)
 (S4, 0.445)
 (S8, 0.454)
 (H, 0.363)
    };
    \addplot+ [ybar] coordinates {
        (G,  0)  (S1, 0) (S2, 0) (S4, 0) (S8, 0) (H, 0)
    };
    \addplot+ [ybar, nodes near coords, point meta= 13.2/ y, nodes near coords align={vertical}, /pgf/number format/.cd,fixed,precision=1] coordinates {
        (G,  0.00001)
        (S1, 0.00001)
        (S2, 0.00001)
        (S4, 0.00001)
        (S8, 0.00001)
        (H, 0.00001)
    };
  \end{axis}
  \end{tikzpicture}}

  \resizebox{\textwidth}{!}{
  \begin{tikzpicture}
  \begin{axis}[
    width=5cm,height=4cm,
    ybar stacked, ymin=0,
    ylabel={\shortstack[c]{Scat. wave $20$K\\ time (s)}}, 
    symbolic x coords={G,S1,S2,S4,S8,H}, xtick=data, 
    ymajorgrids=true, grid style=dashed]
    \addplot+ [ybar] coordinates {
    (G, 81.7)
 (S1, 15.7)
 (S2, 18.6)
 (S4, 27.7)
 (S8, 47.6)
 (H, 44.2)
    };
    \addplot+ [ybar] coordinates {
    (G, 3.47)
 (S1, 2.82)
 (S2, 3.38)
 (S4, 3.44)
 (S8, 1.57)
 (H, 3.31)
    };
    \addplot+ [ybar] coordinates {
        (G,  0)  (S1, 0) (S2, 0) (S4, 0) (S8, 0) (H, 0)
    };
    \addplot+ [ybar, nodes near coords, point meta=85.1 / y, nodes near coords align={vertical}, /pgf/number format/.cd,fixed,precision=1] coordinates {
        (G,  0.00001)
        (S1, 0.00001)
        (S2, 0.00001)
        (S4, 0.00001)
        (S8, 0.00001)
        (H,  0.00001)
    };
  \end{axis}
  \end{tikzpicture}
  \begin{tikzpicture}
  \begin{axis}[
    ybar stacked,
    width=5cm,height=4cm, ymin=0,
    symbolic x coords={G,S1,S2,S4,S8,H}, xtick=data, ymajorgrids=true, grid style=dashed]
    \addplot+ [ybar] coordinates {
(G, 89.9)
 (S1,  17)
 (S2, 20.9)
 (S4, 31.1)
 (S8, 51.6)
 (H, 44.2)
    };
    \addplot+ [ybar] coordinates {
(G, 4.07)
 (S1, 3.9)
 (S2, 3.9)
 (S4, 3.9)
 (S8, 3.88)
 (H, 3.61)
    };
    \addplot+ [ybar] coordinates {
        (G,  0)  (S1, 0) (S2, 0) (S4, 0) (S8, 0) (H, 0)
    };
    \addplot+ [ybar, nodes near coords, point meta=94 / y, nodes near coords align={vertical}, /pgf/number format/.cd,fixed,precision=1] coordinates {
        (G,  0.00001)
        (S1, 0.00001)
        (S2, 0.00001)
        (S4, 0.00001)
        (S8, 0.00001)
        (H,  0.00001)
    };
  \end{axis}
  \end{tikzpicture}
  \begin{tikzpicture}
  \begin{axis}[
    ybar stacked,
    width=5cm,height=4cm, ymin=0,
    symbolic x coords={G,S1,S2,S4,S8,H}, xtick=data, ymajorgrids=true, grid style=dashed]
    \addplot+ [ybar] coordinates {
(G, 81.6)
 (S1,  17)
 (S2, 20.9)
 (S4, 31.3)
 (S8, 51.3)
 (H, 44.3)
    };
    \addplot+ [ybar] coordinates {
(G, 4.09)
 (S1, 4.06)
 (S2, 4.09)
 (S4, 4.18)
 (S8, 4.32)
 (H, 3.62)
    };
    \addplot+ [ybar] coordinates {
        (G,  0)  (S1, 0) (S2, 0) (S4, 0) (S8, 0) (H, 0)
    };
    \addplot+ [ybar, nodes near coords, point meta=85.7 / y, nodes near coords align={vertical}, /pgf/number format/.cd,fixed,precision=1] coordinates {
        (G,  0.00001)
        (S1, 0.00001)
        (S2, 0.00001)
        (S4, 0.00001)
        (S8, 0.00001)
        (H, 0.00001)
    };
  \end{axis}
  \end{tikzpicture}}

  \resizebox{\textwidth}{!}{
  \begin{tikzpicture}
  \begin{axis}[
    width=5cm,height=4cm,
    ybar stacked, ymin=0,
    ylabel={\shortstack[c]{3D Poisson $200^2$\\ time (s)}}, 
    symbolic x coords={G,S1,S2,S4,S8,H}, xtick=data, 
    ymajorgrids=true, grid style=dashed]
    \addplot+ [ybar] coordinates {
    (G, 35.9)
 (S1, 10.2)
 (S2, 9.58)
 (S4, 11.6)
 (S8, 20.5)
 (H, 79.7)
    };
    \addplot+ [ybar] coordinates {
    (G, 3.66)
 (S1, 14.7)
 (S2, 4.17)
 (S4, 3.6)
 (S8, 3.64)
 (H, 14.2)
    };
    \addplot+ [ybar] coordinates {
        (G,  0)  (S1, 0) (S2, 0) (S4, 0) (S8, 0) (H, 0)
    };
    \addplot+ [ybar, nodes near coords, point meta=39.6 / y, nodes near coords align={vertical}, /pgf/number format/.cd,fixed,precision=1] coordinates {
        (G,  0.00001)
        (S1, 0.00001)
        (S2, 0.00001)
        (S4, 0.00001)
        (S8, 0.00001)
        (H,  0.00001)
    };
  \end{axis}
  \end{tikzpicture}
  \begin{tikzpicture}
  \begin{axis}[
    ybar stacked,
    width=5cm,height=4cm, ymin=0,
    symbolic x coords={G,S1,S2,S4,S8,H}, xtick=data, ymajorgrids=true, grid style=dashed]
    \addplot+ [ybar] coordinates {
(G, 82.4)
 (S1, 16.8)
 (S2,  19)
 (S4, 28.2)
 (S8, 49.7)
 (H, 79.9)
    };
    \addplot+ [ybar] coordinates {
(G, 14.5)
 (S1, 50.2)
 (S2, 16.3)
 (S4, 15.4)
 (S8, 15.6)
 (H, 22.3)
    };
    \addplot+ [ybar] coordinates {
       (G,  0)  (S1, 0) (S2, 0) (S4, 0) (S8, 0) (H, 0)
    };
    \addplot+ [ybar, nodes near coords, point meta=96.9 / y, nodes near coords align={vertical}, /pgf/number format/.cd,fixed,precision=1] coordinates {
        (G,  0.00001)
        (S1, 0.00001)
        (S2, 0.00001)
        (S4, 0.00001)
        (S8, 0.00001)
        (H,  0.00001)
    };
  \end{axis}
  \end{tikzpicture}
  \begin{tikzpicture}
  \begin{axis}[
    ybar stacked,
    width=5cm,height=4cm, ymin=0,
    symbolic x coords={G,S1,S2,S4,S8,H}, xtick=data, ymajorgrids=true, grid style=dashed]
    \addplot+ [ybar] coordinates {
(G, 115)
 (S1, 18.7)
 (S2, 26.1)
 (S4, 38.7)
 (S8, 70.2)
 (H, 80.1)
    };
    \addplot+ [ybar] coordinates {
(G, 30.6)
 (S1, 78.1)
 (S2,  34)
 (S4, 32.4)
 (S8, 33.3)
 (H, 32.6)
    };
    \addplot+ [ybar] coordinates {
(G,  0)  (S1, 0) (S2, 0) (S4, 0) (S8, 0) (H, 0)
    };
    \addplot+ [ybar, nodes near coords, point meta=146 / y, nodes near coords align={vertical}, /pgf/number format/.cd,fixed,precision=1] coordinates {
        (G,  0.00001)
        (S1, 0.00001)
        (S2, 0.00001)
        (S4, 0.00001)
        (S8, 0.00001)
        (H,  0.00001)
    };
  \end{axis}
  \end{tikzpicture}}
  \\
  
  \ref{times_legend}
  \caption{Serial HSS construction time and sketching time. Overall speedup compared to Gaussian sketching is shown at the top of each bar.}
  \label{fig:times}
\end{figure}

\Cref{tab:experiments} shows timing results for the four test problems, with varying dimensions and compression tolerances. In this table, the HSS construction time includes the sketching time. The final column shows the memory usage for the HSS matrix as a percentage of the storage requirements for the corresponding dense matrix. This means that if $n\%$ is listed in the table, $n\%$ of the space required to store a dense matrix $A$ is required to store an HSS compressed version. As expected, when we increase the problem size, memory usage goes down when using HSS format relative to dense format.

The timings for the largest matrices of each test case are also shown in \cref{fig:times} where blue represents the sketching step for each run and red represents the remaining HSS construction time. We observe that the sketching step does in fact dominate the computation. Additionally, we list the ratio of total time to run the compression algorithm in relation to the Gaussian case. 
Frequently, we observe that with SJLT($\alpha = 1$) we achieve an up to $12 \times$ speedup and when we use $\alpha = 2\text{ or }4$ we achieve up to $7 \times$ speedup. We observed that the random matrix construction time is negligible in both the Gaussian and SJLT cases. 

For a matrix $A \in \mathbb{C}^{m \times n}$, the computational cost for a sketch in $d$-dimensions is $O(mnd)$ for the Gaussian sketch and $O(mn \log n)$ for SRHT. As such, SRHT  is more efficient compared to the Gaussian matrix in the regime where $d > \log n$ and less efficient when $d < \log n$. The QChem matrix has small rank and requires small $d$; hence SRHT is inefficient in this regime. For the other test cases, where $d$ is large, SRHT is competitive with the Gaussian and SJLT, and in some cases the most efficient.

\begin{figure}
  \centering
  \begin{tikzpicture}
  \begin{axis}[
    width=4.5cm,height=4cm,
    title={$\epsr = 10^{-2}$},
    ylabel={$d/r$},
    ymin=1, ymax=1.8, 
    symbolic x coords={Cov,Imp,Front},
    ymajorgrids=true, grid style=dashed,
    cycle list name=exotic, xtick=data,
    legend entries={$G$, $S(1)$, $S(2)$, $S(4)$, $S(8)$},
    legend to name=d_rank_ratio_legend,legend columns=5]
    \addplot[color=red, mark=*] 
    coordinates { 
        (Cov,   1.55465587044534)
        (Imp,   1.10133843212237)
        (Front, 1.70919881305638)
    };
    \addplot[color=green, mark=square*] 
    coordinates { 
        (Cov,   1.51778656126482)
        (Imp,   1.10133843212237)
        (Front, 1.16201117318436)
    };
    \addplot[color=blue, mark=triangle*] 
    coordinates { 
        (Cov,   1.44796380090498)
        (Imp,   1.09923664122137)
        (Front, 1.71940298507463)
    };
    \addplot[color=black, mark=star] 
    coordinates { 
        (Cov,   1.54838709677419)
        (Imp,   1.10133843212237)
        (Front, 1.71428571428571)
    };
    \addplot[color=gray, mark=pentagon] 
    coordinates { 
        (Cov,   1.63404255319149)
        (Imp,   1.09923664122137)
        (Front, 1.71940298507463)
    };
  \end{axis}
  \end{tikzpicture}
  \begin{tikzpicture}
  \begin{axis}[
    width=4.5cm,height=4cm,
    title={$\epsr = 10^{-4}$},
    ymin=1, ymax=1.8, 
    symbolic x coords={Cov,Imp,Front},
    ymajorgrids=true, grid style=dashed,
    cycle list name=exotic, xtick=data]
    \addplot[color=red, mark=*] 
    coordinates { 
        (Cov,   1.34782608695652)
        (Imp,   1.18959107806691)
        (Front, 1.4125636672326)
    };
    \addplot[color=green, mark=square*] 
    coordinates { 
        (Cov,   1.42222222222222)
        (Imp,   1.18959107806691)
        (Front, 1.4006734006734)
    };
    \addplot[color=blue, mark=triangle*] 
    coordinates { 
        (Cov,   1.38947368421053)
        (Imp,   1.19626168224299)
        (Front, 1.48837209302326)
    };
    \addplot[color=black, mark=star] 
    coordinates { 
        (Cov,   1.39774983454666)
        (Imp,   1.19402985074627)
        (Front, 1.48837209302326)
    };
    \addplot[color=gray, mark=pentagon] 
    coordinates { 
        (Cov,   1.43673469387755)
        (Imp,   1.18959107806691)
        (Front, 1.49832775919732)
    };
  \end{axis}
  \end{tikzpicture}
  \begin{tikzpicture}
  \begin{axis}[
    width=4.5cm,height=4cm,
    title={$\epsr = 10^{-6}$},
    ymin=1, ymax=1.8, 
    symbolic x coords={Cov,Imp,Front},
    ymajorgrids=true, grid style=dashed,
    cycle list name=exotic, xtick=data]
    \addplot[color=red, mark=*] 
    coordinates { 
        (Cov,   1.32330827067669)
        (Imp,   1.04727272727273)
        (Front, 1.51243781094527)
    };
    \addplot[color=green, mark=square*] 
    coordinates { 
        (Cov,   1.38996138996139)
        (Imp,   1.13676731793961)
        (Front, 1.50123456790123)
    };
    \addplot[color=blue, mark=triangle*] 
    coordinates { 
        (Cov,   1.38528138528139)
        (Imp,   1.14695340501792)
        (Front, 1.56862745098039)
    };
    \addplot[color=black, mark=star] 
    coordinates { 
        (Cov,   1.40693698094773)
        (Imp,   1.14490161001789)
        (Front, 1.60401002506266)
    };
    \addplot[color=gray, mark=pentagon] 
    coordinates { 
        (Cov,   1.38929088277858)
        (Imp,   1.13676731793961)
        (Front, 1.60200250312891)
    };
  \end{axis}
  \end{tikzpicture}
  \\

  \ref{d_rank_ratio_legend}
  \caption{Oversampling ratios, the final $d$ over the HSS rank, for the largest test cases, covariance, impedance matrix (scattering wave), and frontal matrix. The quantum chemistry Toeplitz problem is omitted, since for this problem the rank are so small that it does not require any adaptation.}
  \label{fig:d_rank_ratio}
\end{figure}
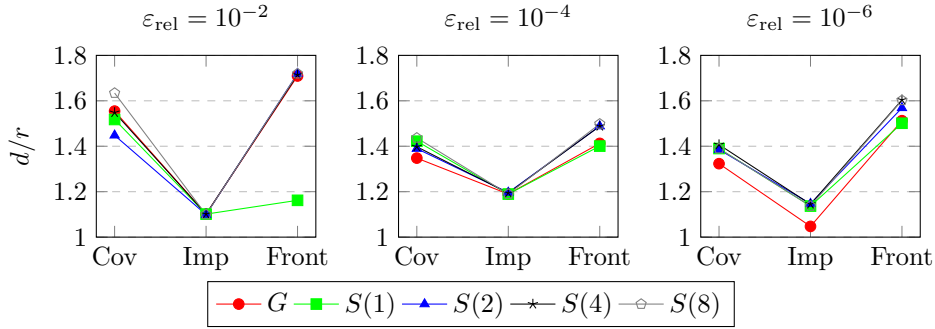

\Cref{fig:d_rank_ratio} shows the oversampling ratio, i.e., the ratio of the final $d$ over the HSS rank $r$, for the largest test problems. The quantum chemistry Toeplitz problem is omitted, since the ranks are so small that no adaptation is required. The oversampling ratio is similar for the different sketching methods. 

\input{PaperSections/figs/serialfigures/HSS_error_rank.tex}

Finally, \crefrange{fig:err_rank_cov}{fig:err_rank_front} show the relative errors and the HSS ranks for these problems. For these results, the experiments are run $5$ times and the figures show error bars with the minimum, median and maximum values. We observe that the HSS ranks and errors are comparable between all of the sketching operators except for SJLT with $\alpha = 1$, in which performance in terms of rank and error are worse than Gaussian sketching operators. 
From \Cref{fig:err_rank_cov} we observe that the errors and the ranks are approximately the same across all methods except S(1) which has worse error and H which has larger ranks for the largest problems. S(1) is often not sufficient to obtain good accuracy and H has some performance degradation for larger HSS ranks. From \cref{fig:times} we observe that SJLT is the most efficient method, yielding a time improvement ranging from 1.2--4$\times$ over the Gaussian sketches. 

For the QChem Toeplitz matrix, we observe that the HSS ranks and errors are the same across all of the methods except S(1) in some cases (\cref{fig:err_rank_qc}). Again, this is likely due to S(1) not being sufficiently dense to capture the matrix information. For timing, since this problem has the smallest ranks, SJLT is able to outperform all of the methods because it is the fastest sketch to apply, while SRHT performs worse due to the large overhead of computing the Hadamard transform.

For the Scattering wave problem, we observe that the HSS ranks and errors are the same except in the strictest tolerances, blue triangles in \cref{fig:err_rank_wave}, where the error is worse for SJLT and SRHT. In this case Gaussian sketches yield the most accurate results but S(8) has comparable errors and can be computed between 1.5--1.7$\times$ faster (see \cref{fig:times}) which highlights that this performance improvement may come at a slight loss of accuracy.

Finally, for the 3D Poisson frontal matrix in \cref{fig:err_rank_front} we observe that the errors and HSS ranks degrade for S(1) and S(2) relative to the other methods. This is likely due to this problem having larger HSS ranks but, as shown in \cref{fig:times}, S(8) can be applied 1.4--1.6$\times$ faster than Gaussian sketches and yields similar accuracy.

We recommend that users of STRUMPACK use the default values of $d_0 = 128, \Delta d = 64$ when running the HSS compression algorithm. Additionally, if using SJLT matrices we recommend setting $\alpha = 4$, the default value. We have found that this is usually the correct balance between performance improvement over Gaussian sketching operators while having similar accuracy.

\subsubsection{Distributed Memory Results with SJLT}\label{subsec:hss_mpi}

Next we experiment with using the distributed memory SJLT sketching operators and distributed memory Gaussian sketching operators. We did not implement a parallel distributed version of SRHT because SRHT was less competitive compared to SJLT.
We conduct all distributed experiments in the symmetric dense matrix $A$ case and only calculate a sketch of $S = AR$.  We run all experiments for three trials with the following fixed settings: relative tolerance $\epsr = 10^{-4}$, absolute tolerance: $\epsa = 10^{-7}$, HSS leaf size: 512, initial sketch size $d_0 = 512$ and adaptive sketch size $\Delta d = 256$. We vary the sketching operator settings using SJLT with 1, 2, 4 and 8 nonzeros in addition to the Gaussian sketching operators. Additionally, we vary the number of MPI ranks: 8, 16 and 32 requiring 1, 2 and 4 cpu nodes on Perlmutter respectively. Since our distributed parallel implementation is only compatible with symmetric matrices we test the HSS construction algorithm on the covariance matrix, Toeplitz matrix and 3d Poisson frontal matrix described in \cref{subsec:setup}. We test on problem sizes that are larger than the sequential case showing that the distributed parallel implementation is more scalable.

\begin{table}[h]
    \centering
    \resizebox{\textwidth}{!}{
    \begin{tabular}{|l|r|r|rrrrr|rrrrr|}
          \hline
                &     &            & \multicolumn{5}{c|}{HSS sketching time (sec)} & \multicolumn{5}{c|}{Total HSS construction time (sec)} \\ 
         Matrix & MPI size & $n$ & G & S(1) & S(2) & S(4) & S(8) & G & S(1) & S(2) & S(4) & S(8) \\
         \hline
         \hline
         \multirow{9}{*}{\shortstack[l]{Cov.}}
    & \multirow{3}{*}{8} & $20^3$ & 0.402 & 0.009 & 0.013 & 0.028 & 0.056 & 1.084 & 0.705 & 0.649 & 0.854 & 0.815 \\
    &   & $30^3$ & 9.062 & 0.202 & 0.328 & 0.598 & 1.001 & 16.549 & 7.99 & 7.903 & 7.981 & 7.507 \\
    &   & $35^3$ & 38.519 & 0.877 & 1.241 & 2.475 & 4.408 & 77.163 & 49.63 & 42.006 & 41.34 & 37.212 \\
        \cline{2-13}
    & \multirow{3}{*}{16} & $20^3$ & 0.219 & 0.003 & 0.006 & 0.013 & 0.028 & 0.742 & 0.544 & 0.503 & 0.646 & 0.608 \\
    &   & $30^3$ & 4.712 & 0.106 & 0.172 & 0.318 & 0.542 & 9.591 & 5.508 & 5.401 & 5.341 & 5.0 \\
    &   & $35^3$ & 19.494 & 0.448 & 0.634 & 1.273 & 2.269 & 43.014 & 29.432 & 25.644 & 25.569 & 22.905 \\
        \cline{2-13}
    & \multirow{3}{*}{32} & $20^3$ & 0.128 & 0.0001 & 0.003 & 0.004 & 0.012 & 0.605 & 0.676 & 0.463 & 0.525 & 0.533 \\
    &   & $30^3$ & 2.585 & 0.057 & 0.09 & 0.168 & 0.295 & 6.097 & 4.053 & 3.952 & 3.952 & 3.524 \\
    &   & $35^3$ & 10.076 & 0.24 & 0.331 & 0.67 & 1.204 & 23.362 & 18.446 & 16.138 & 15.951 & 14.174 \\
        \hline
        \hline
         \multirow{9}{*}{\shortstack[l]{QChem\\ Toeplitz}}
    & \multirow{3}{*}{8} & 25K & 3.087 & 0.027 & 0.038 & 0.067 & 0.125 & 3.454 & 0.204 & 0.23 & 0.28 & 0.316 \\
    &   & 50K & 12.433 & 0.132 & 0.212 & 0.375 & 0.686 & 13.339 & 0.635 & 0.722 & 0.744 & 1.191 \\
    &   & 100K & 50.064 & 0.666 & 1.097 & 1.986 & 3.762 & 54.088 & 1.565 & 1.884 & 6.913 & 5.437 \\
        \cline{2-13}
    & \multirow{3}{*}{16} & 25K & 1.61 & 0.012 & 0.019 & 0.034 & 0.064 & 1.864 & 0.155 & 0.144 & 0.189 & 0.191 \\
    &   & 50K & 6.284 & 0.049 & 0.073 & 0.132 & 0.25 & 6.659 & 0.569 & 0.352 & 0.852 & 0.67 \\
    &   & 100K & 25.2 & 0.258 & 0.422 & 0.749 & 1.376 & 26.537 & 2.221 & 1.114 & 3.355 & 2.0 \\
        \cline{2-13}
    & \multirow{3}{*}{32} & 25K & 0.889 & 0.007 & 0.011 & 0.018 & 0.036 & 1.009 & 0.119 & 0.108 & 0.126 & 0.147 \\
    &   & 50K & 3.404 & 0.023 & 0.037 & 0.068 & 0.131 & 3.642 & 0.213 & 0.228 & 0.274 & 0.31 \\
    &   & 100K & 13.678 & 0.093 & 0.147 & 0.263 & 0.496 & 14.113 & 0.76 & 0.599 & 0.733 & 1.17 \\
        \hline
        \hline
         \multirow{9}{*}{\shortstack[l]{3D\\Poisson\\front}}
    & \multirow{3}{*}{8} & $100^2$ & 0.516 & 0.004 & 0.006 & 0.011 & 0.021 & 0.853 & 0.381 & 0.311 & 0.301 & 0.31 \\
    &   & $150^2$ & 2.622 & 0.038 & 0.031 & 0.056 & 0.104 & 3.551 & 1.291 & 0.865 & 0.818 & 0.937 \\
    &   & $200^2$ & 10.692 & 0.241 & 0.121 & 0.38 & 0.713 & 12.934 & 3.596 & 1.769 & 2.175 & 2.471 \\
        \cline{2-13}
    & \multirow{3}{*}{16} & $100^2$ & 0.271 & 0.002 & 0.003 & 0.006 & 0.011 & 0.507 & 0.262 & 0.246 & 0.244 & 0.248 \\
    &   & $150^2$ & 1.365 & 0.019 & 0.015 & 0.027 & 0.052 & 1.998 & 0.855 & 0.617 & 0.576 & 0.612 \\
    &   & $200^2$ & 5.426 & 0.115 & 0.048 & 0.174 & 0.334 & 6.798 & 2.783 & 1.498 & 1.426 & 1.55 \\
        \cline{2-13}
    & \multirow{3}{*}{32} & $100^2$ & 0.158 & 0.001 & 0.002 & 0.003 & 0.006 & 0.365 & 0.23 & 0.212 & 0.205 & 0.208 \\
    &   & $150^2$ & 0.707 & 0.011 & 0.009 & 0.015 & 0.029 & 1.129 & 0.683 & 0.431 & 0.46 & 0.478 \\
    &   & $200^2$ & 3.086 & 0.059 & 0.024 & 0.081 & 0.157 & 4.087 & 1.812 & 0.878 & 0.995 & 1.092 \\
        \cline{2-13}
         \hline
    \end{tabular}
    }
    \caption{Parallel runtimes for HSS sketching and construction, excluding redistribution times.
    $G$ refers to sketching with a Gaussian sketching operator, $S(\alpha)$ refers to sketching with an SJLT matrix (block construction) with $\alpha$ nonzeros per row.}
    \label{tab:mpi_timing_table}
\end{table}

\begin{figure}
  \centering
  \resizebox{\textwidth}{!}{
  \begin{tikzpicture}
  \begin{axis}[
    width=5cm,height=4cm,
    ybar stacked, ymin=0,
    title={$P=8$},
    ylabel={\shortstack[c]{Covariance $35^3$\\ time (s)}}, 
    symbolic x coords={G,S1,S2,S4,S8}, xtick=data, 
    ymajorgrids=true, grid style=dashed,
    legend entries={sketching, HSS construction},
    legend to name=times_legend_mpi,legend columns=3]
    \addplot+ [ybar] coordinates {
(G, 38.519)
 (S1, 0.877)
 (S2, 1.241)
 (S4, 2.475)
 (S8, 4.408)
    };
    \addplot+ [ybar] coordinates {
(G, 38.644)
 (S1, 48.753)
 (S2, 40.765)
 (S4, 38.865)
 (S8, 32.804)
    };
    \addplot+ [ybar] coordinates {
        (G,  0)  (S1, 0) (S2, 0) (S4, 0) (S8, 0)
    };
    \addplot+ [ybar, nodes near coords, point meta=77.163 / y, nodes near coords align={vertical}, /pgf/number format/.cd,fixed,precision=1] coordinates {
        (G,  0.00001)
        (S1, 0.00001)
        (S2, 0.00001)
        (S4, 0.00001)
        (S8, 0.00001)
    };
  \end{axis}
  \end{tikzpicture}
  \begin{tikzpicture}
  \begin{axis}[
    width=5cm,height=4cm,
    ybar stacked, ymin=0,
    title={$P=16$},
    symbolic x coords={G,S1,S2,S4,S8}, xtick=data, 
    ymajorgrids=true, grid style=dashed]
    \addplot+ [ybar] coordinates {
(G, 19.494)
 (S1, 0.448)
 (S2, 0.634)
 (S4, 1.273)
 (S8, 2.269)
    };
    \addplot+ [ybar] coordinates {
(G, 23.52)
 (S1, 28.984)
 (S2, 25.01)
 (S4, 24.296)
 (S8, 20.636)
    };
    \addplot+ [ybar] coordinates {
        (G,  0)  (S1, 0) (S2, 0) (S4, 0) (S8, 0)
    };
    \addplot+ [ybar, nodes near coords, point meta=43.014 / y, nodes near coords align={vertical}, /pgf/number format/.cd,fixed,precision=1] coordinates {
        (G,  0.00001)
        (S1, 0.00001)
        (S2, 0.00001)
        (S4, 0.00001)
        (S8, 0.00001)
    };
  \end{axis}
  \end{tikzpicture}
  \begin{tikzpicture}
  \begin{axis}[
    width=5cm,height=4cm,
    ybar stacked, ymin=0,
    title={$P=32$},
    symbolic x coords={G,S1,S2,S4,S8}, xtick=data, 
    ymajorgrids=true, grid style=dashed]
    \addplot+ [ybar] coordinates {
(G, 10.076)
 (S1, 0.24)
 (S2, 0.331)
 (S4, 0.67)
 (S8, 1.204)
    };
    \addplot+ [ybar] coordinates {
(G, 13.286)
 (S1, 18.206)
 (S2, 15.807)
 (S4, 15.281)
 (S8, 12.97)
    };
    \addplot+ [ybar] coordinates {
        (G,  0)  (S1, 0) (S2, 0) (S4, 0) (S8, 0)
    };
    \addplot+ [ybar, nodes near coords, point meta=23.362 / y, nodes near coords align={vertical}, /pgf/number format/.cd,fixed,precision=1] coordinates {
        (G,  0.00001)
        (S1, 0.00001)
        (S2, 0.00001)
        (S4, 0.00001)
        (S8, 0.00001)
    };
  \end{axis}
  \end{tikzpicture}  }

    \resizebox{\textwidth}{!}{
  \begin{tikzpicture}
  \begin{axis}[
    width=5cm,height=4cm,
    ybar stacked, ymin=0,
    title={$P=8$},
    ylabel={\shortstack[c]{QChem $100$K\\ time (s)}}, 
    symbolic x coords={G,S1,S2,S4,S8}, xtick=data, 
    ymajorgrids=true, grid style=dashed]
    \addplot+ [ybar] coordinates {
(G, 50.064)
 (S1, 0.666)
 (S2, 1.097)
 (S4, 1.986)
 (S8, 3.762)
    };
    \addplot+ [ybar] coordinates {
(G, 4.024)
 (S1, 0.899)
 (S2, 0.787)
 (S4, 4.927)
 (S8, 1.675)
    };
    \addplot+ [ybar] coordinates {
        (G,  0)  (S1, 0) (S2, 0) (S4, 0) (S8, 0)
    };
    \addplot+ [ybar, nodes near coords, point meta=54.088 / y, nodes near coords align={vertical}, /pgf/number format/.cd,fixed,precision=1] coordinates {
        (G,  0.00001)
        (S1, 0.00001)
        (S2, 0.00001)
        (S4, 0.00001)
        (S8, 0.00001)
    };
  \end{axis}
  \end{tikzpicture}
  \begin{tikzpicture}
  \begin{axis}[
    width=5cm,height=4cm,
    ybar stacked, ymin=0,
    title={$P=16$},
    symbolic x coords={G,S1,S2,S4,S8}, xtick=data, 
    ymajorgrids=true, grid style=dashed]
    \addplot+ [ybar] coordinates {
(G, 25.2)
 (S1, 0.258)
 (S2, 0.422)
 (S4, 0.749)
 (S8, 1.376)
    };
    \addplot+ [ybar] coordinates {
(G, 1.337)
 (S1, 1.963)
 (S2, 0.692)
 (S4, 2.606)
 (S8, 0.624)
    };
    \addplot+ [ybar] coordinates {
        (G,  0)  (S1, 0) (S2, 0) (S4, 0) (S8, 0)
    };
    \addplot+ [ybar, nodes near coords, point meta=26.537 / y, nodes near coords align={vertical}, /pgf/number format/.cd,fixed,precision=1] coordinates {
        (G,  0.00001)
        (S1, 0.00001)
        (S2, 0.00001)
        (S4, 0.00001)
        (S8, 0.00001)
    };
  \end{axis}
  \end{tikzpicture}
  \begin{tikzpicture}
  \begin{axis}[
    width=5cm,height=4cm,
    ybar stacked, ymin=0,
    title={$P=32$},
    symbolic x coords={G,S1,S2,S4,S8}, xtick=data, 
    ymajorgrids=true, grid style=dashed]
    \addplot+ [ybar] coordinates {
(G, 13.678)
 (S1,  0.093)
 (S2, 0.147)
 (S4, 0.263)
 (S8, 0.496)
    };
    \addplot+ [ybar] coordinates {
(G, 0.435)
 (S1, 0.667)
 (S2, 0.452)
 (S4, 0.47)
 (S8, 0.674)
    };
    \addplot+ [ybar] coordinates {
        (G,  0)  (S1, 0) (S2, 0) (S4, 0) (S8, 0)
    };
    \addplot+ [ybar, nodes near coords, point meta=14.113 / y, nodes near coords align={vertical}, /pgf/number format/.cd,fixed,precision=1] coordinates {
        (G,  0.00001)
        (S1, 0.00001)
        (S2, 0.00001)
        (S4, 0.00001)
        (S8, 0.00001)
    };
  \end{axis}
  \end{tikzpicture}  }

 \resizebox{\textwidth}{!}{
  \begin{tikzpicture}
  \begin{axis}[
    width=5cm,height=4cm,
    ybar stacked, ymin=0,
    title={$P=8$},
    ylabel={\shortstack[c]{3D Poisson $200^2$\\ time (s)}}, 
    symbolic x coords={G,S1,S2,S4,S8}, xtick=data, 
    ymajorgrids=true, grid style=dashed]
    \addplot+ [ybar] coordinates {
(G, 10.692)
 (S1, 0.241)
 (S2, 0.121)
 (S4, 0.38)
 (S8, 0.713 )
    };
    \addplot+ [ybar] coordinates {
(G, 2.242)
 (S1, 3.355)
 (S2, 1.648)
 (S4, 1.795)
 (S8, 1.758)
    };
    \addplot+ [ybar] coordinates {
        (G,  0)  (S1, 0) (S2, 0) (S4, 0) (S8, 0)
    };
    \addplot+ [ybar, nodes near coords, point meta=12.934 / y, nodes near coords align={vertical}, /pgf/number format/.cd,fixed,precision=1] coordinates {
        (G,  0.00001)
        (S1, 0.00001)
        (S2, 0.00001)
        (S4, 0.00001)
        (S8, 0.00001)
    };
  \end{axis}
  \end{tikzpicture}
  \begin{tikzpicture}
  \begin{axis}[
    width=5cm,height=4cm,
    ybar stacked, ymin=0,
    title={$P=16$},
    symbolic x coords={G,S1,S2,S4,S8}, xtick=data, 
    ymajorgrids=true, grid style=dashed]
    \addplot+ [ybar] coordinates {
(G, 5.426)
 (S1, 0.115)
 (S2, 0.048)
 (S4, 0.174)
 (S8, 0.334)
    };
    \addplot+ [ybar] coordinates {
(G, 1.372)
 (S1, 2.668)
 (S2, 1.45)
 (S4, 1.252)
 (S8, 1.216)
    };
    \addplot+ [ybar] coordinates {
        (G,  0)  (S1, 0) (S2, 0) (S4, 0) (S8, 0)
    };
    \addplot+ [ybar, nodes near coords, point meta=6.798 / y, nodes near coords align={vertical}, /pgf/number format/.cd,fixed,precision=1] coordinates {
        (G,  0.00001)
        (S1, 0.00001)
        (S2, 0.00001)
        (S4, 0.00001)
        (S8, 0.00001)
    };
  \end{axis}
  \end{tikzpicture}
  \begin{tikzpicture}
  \begin{axis}[
    width=5cm,height=4cm,
    ybar stacked, ymin=0,
    title={$P=32$},
    symbolic x coords={G,S1,S2,S4,S8}, xtick=data, 
    ymajorgrids=true, grid style=dashed]
    \addplot+ [ybar] coordinates {
(G, 3.086)
 (S1, 0.059)
 (S2, 0.024)
 (S4, 0.081)
 (S8, 0.157)
    };
    \addplot+ [ybar] coordinates {
(G, 1.001)
 (S1, 1.753)
 (S2, 0.854)
 (S4, 0.914)
 (S8, 0.935)
    };
    \addplot+ [ybar] coordinates {
        (G,  0)  (S1, 0) (S2, 0) (S4, 0) (S8, 0)
    };
    \addplot+ [ybar, nodes near coords, point meta=4.087 / y, nodes near coords align={vertical}, /pgf/number format/.cd,fixed,precision=1] coordinates {
        (G,  0.00001)
        (S1, 0.00001)
        (S2, 0.00001)
        (S4, 0.00001)
        (S8, 0.00001)
    };
  \end{axis}
  \end{tikzpicture}  }
  \\
  
  \ref{times_legend_mpi}
  \caption{HSS construction time and sketching time for the distributed memory experiments ($\epsilon=10^{-4}$). Overall speedup compared to Gaussian sketching is shown at the top of each bar.}
  \label{fig:times_MPI}
\end{figure}
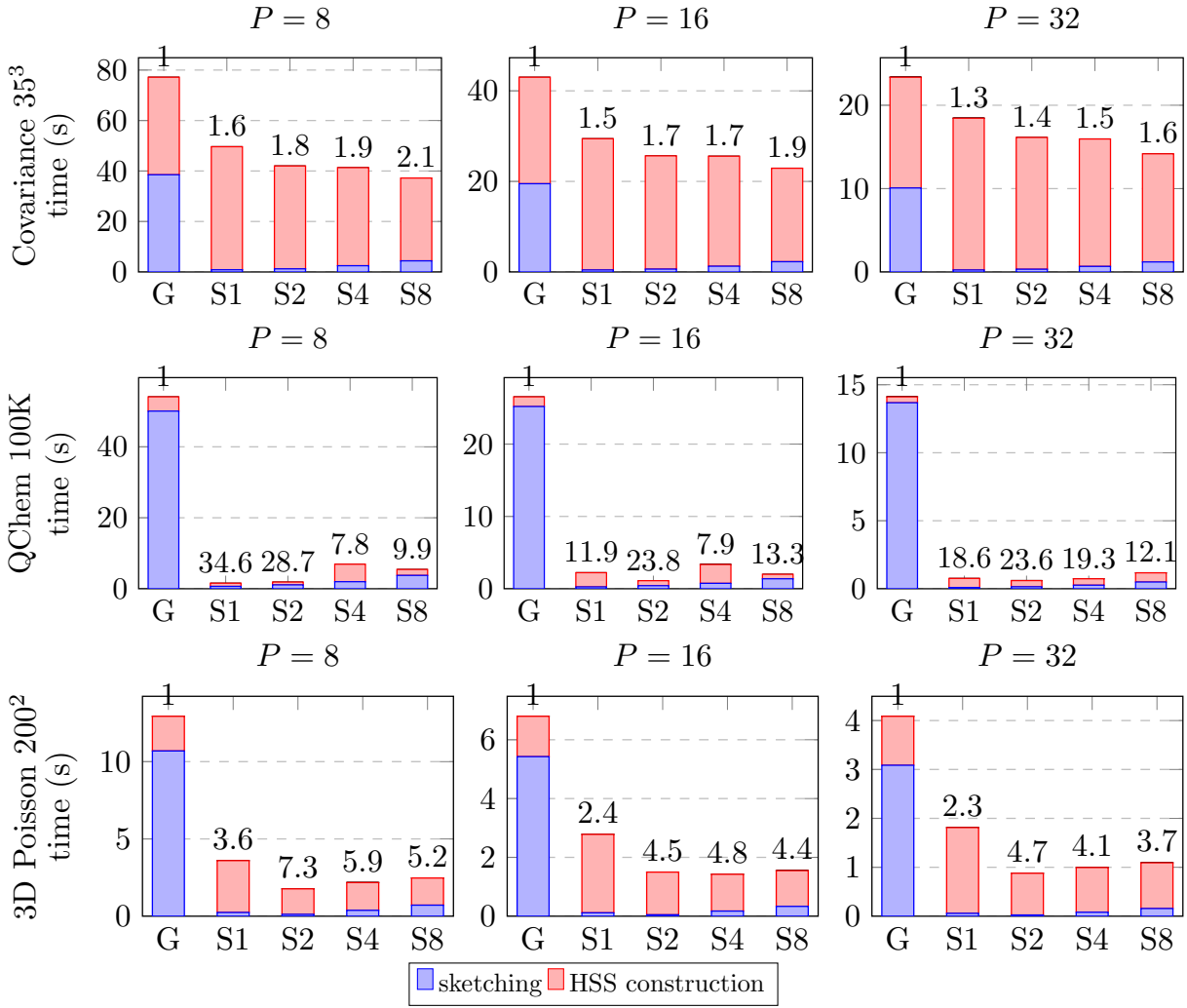

In \cref{tab:mpi_timing_table} we show the parallel sketching time and the total HSS construction time. We observe that the Sketching time for SJLT versus Gaussian sketching operators across all of the test matrices yields between an 8-40x improvement in sketching time. We hypothesize that this improvement is attributed to the reduced communication cost of computing the sketch. In the Gaussian case, since the Gaussian sketching operator is dense we store it in a 2d block cyclic form across the MPI ranks and the same for dense matrix $A$, which requires additional communication time to compute $AR$. Whereas for SJLT, since it is a sparse matrix with low memory cost to store we can duplicate the sketching operator $R$ and use a 1d block row distribution of $A$, and multiplication routine across all MPI ranks. This yields no communication when computing the sketch which yields a 8--40$\times$ improvement in sketching time. Similarly to sketching time, the overall HSS construction time yields a 1.3--35$\times$ improvement depending on the problem which can be observed in \cref{fig:times_MPI}. Additionally, we observe that when we double the MPI ranks from 8 to 16 to 32 the timing is halved and then halved again across all problems, as expected. The total HSS construction time improvement is problem and parameter dependent. 

For the Covariance matrix, which has the largest HSS rank, we see a large speedup in the sketching time of up to 40x speedup. This speedup is not reflected in the overall time which is between 1.2--1.7$\times$ faster. This is likely due to the larger HSS rank, requiring more adaptive steps be taken, increasing the computation on other parts of the algorithm. 
The final $d$ and the HSS ranks for all experiments can be found in the appendix in \cref{tab:experiments_ranks_mpi}.
For the Toeplitz matrix, which has the smallest HSS rank among test problems, there is the largest improvement when using SJLT sketching operators over Gaussian on overall HSS construction of between approximately 8--35$\times$.  
Finally, the 3d Poisson frontal matrix has an up to 100$\times$ speedup when computing the sketch but the overall time is improved by a factor of 2.3--7.3$\times$. By using this parallel distributed implementation the global sketch is no longer the bottleneck for the HSS construction algorithm.

\section{Conclusions}
\label{sec:conclusion}
In this paper we extend the adaptive HSS compression algorithm from \cite{gorman2019robust} which required a Gaussian sketching operator to use any Johnson--Lindenstrauss sketching operator. We provide theoretical guarantees that the adaptive stopping criterion holds for all JL sketching operators including a concentration bound in terms of Frobenius norm. We implement the Sparse Johnson--Lindenstrauss Transform from \cite{KaneNelson14} as a use case for the more general HSS compression algorithm and examine when such a transform outperforms the Gaussian sketching operator. We provide the code in the STRUMPACK C++ library \footnote{\url{https://github.com/pghysels/STRUMPACK/}}.
We demonstrate experimentally that using SJLT or SRHT instead of Gaussian sketching operators leads to up to 2.5$\times$ speedups of the serial HSS construction implementation and up to 35$\times$ speedup over Gaussian in the parallel STRUMPACK C++ implementation using up to 32 processes.

\section*{Acknowledgments}
This research was supported by the Exascale Computing Project (17-SC-20-SC), a collaborative effort of the U.S. Department of Energy Office of Science and the National Nuclear Security Administration. This research used resources of the National Energy Research Scientific Computing Center (NERSC), a U.S. Department of Energy Office of Science User Facility located at Lawrence Berkeley National Laboratory, operated under Contract No. DE-AC02-05CH11231 using NERSC award ASCR-ERCAP0017690. YY was partially supported by the NSF MSGI summer internship program. OAM was supported by the U.S.\ Department of Energy, Office of Science, Office of Advanced Scientific Computing Research under Award Number DE-AC02-05CH11231. All opinions expressed in this paper are the author's and do not necessarily reflect the policies and views of NSF, ORAU/ORISE, or DOE. We acknowledge the Scalable Solvers Group in the Applied Math and Computational Research Division of LBNL, Kenzaburo Nagahama, and Kristen Dawson for insightful conversations.

\bibliographystyle{plain}
\bibliography{ref}

\begin{thebibliography}{10}

\bibitem{AilonChazelle06}
N.~Ailon and B.~Chazelle.
\newblock {Approximate Nearest Neighbors and the Fast Johnson-Lindenstrauss Transform}.
\newblock In {\em Proceedings of the thirty-eighth annual ACM symposium on Theory of Computing (STOC)}, pages 557--563, Portsmouth, Virginia, May 2006.

\bibitem{avron2011RandomizedAlgorithms}
Haim Avron and Sivan Toledo.
\newblock {Randomized Algorithms for Estimating the Trace of an Implicit Symmetric Positive Semi-Definite Matrix}.
\newblock {\em Journal of the ACM}, 58(2), apr 2011.

\bibitem{bamberger2021johnson}
Stefan Bamberger, Felix Krahmer, and Rachel Ward.
\newblock {Johnson-Lindenstrauss Embeddings with Kronecker Structure}.
\newblock {\em arXiv preprint arXiv:2106.13349}, 2021.

\bibitem{barrett1994templates}
Richard Barrett, Michael Berry, Tony~F Chan, James Demmel, June Donato, Jack Dongarra, Victor Eijkhout, Roldan Pozo, Charles Romine, and Henk Van~der Vorst.
\newblock {\em Templates for the Solution of Linear Systems: Building Blocks for Iterative Methods}.
\newblock SIAM, 1994.

\bibitem{Bebendorf08}
M.~Bebendorf.
\newblock {\em Hierarchical Matrices}, volume~63 of {\em Lecture Notes in Computational Science and Engineering}.
\newblock Springer, Berlin Heidelberg, 2008.

\bibitem{chandrasekaran2006fast}
Shiv Chandrasekaran, Ming Gu, and Timothy Pals.
\newblock {A Fast ULV Decomposition Solver for Hierarchically Semiseparable Representations}.
\newblock {\em SIAM Journal on Matrix Analysis and Applications}, 28(3):603--622, 2006.

\bibitem{chandrasekaran2005fast}
Shivkumar Chandrasekaran, Ming Gu, and William Lyons.
\newblock {A fast Adaptive Solver for Hierarchically Semiseparable Representations}.
\newblock {\em Calcolo}, 42(3):171--185, 2005.

\bibitem{KRR-ipdps20}
Gustavo Ch{\'a}vez, Yang Liu, Pieter Ghysels, Xiaoye~Sherry Li, and Elizaveta Rebrova.
\newblock Scalable and memory-efficient kernel ridge regression.
\newblock In {\em 2020 IEEE International parallel and distributed processing symposium (IPDPS)}, pages 956--965. IEEE, 2020.

\bibitem{chen2022solving}
Chao Chen and Per-Gunnar Martinsson.
\newblock Solving linear systems on a gpu with hierarchically off-diagonal low-rank approximations.
\newblock In {\em SC22: International Conference for High Performance Computing, Networking, Storage and Analysis}, pages 1--15. IEEE, 2022.

\bibitem{cohen2018simple}
Michael~B Cohen, TS~Jayram, and Jelani Nelson.
\newblock {Simple Analyses of the Sparse Johnson-Lindenstrauss Transform}.
\newblock In {\em 1st Symposium on Simplicity in Algorithms (SOSA 2018)}. Schloss Dagstuhl-Leibniz-Zentrum fuer Informatik, 2018.

\bibitem{dasgupta2003elementary}
Sanjoy Dasgupta and Anupam Gupta.
\newblock {An Elementary Proof of a Theorem of Johnson and Lindenstrauss}.
\newblock {\em Random Structures \& Algorithms}, 22(1):60--65, 2003.

\bibitem{fernando2017scalable}
Isuru~Dilanka Fernando, Sanath Jayasena, Milinda Fernando, and Hari Sundar.
\newblock A scalable hierarchical semi-separable library for heterogeneous clusters.
\newblock In {\em 2017 46th International Conference on Parallel Processing (ICPP)}, pages 513--522. IEEE, 2017.

\bibitem{ghysels2017-ipdps}
P.~Ghysels, C.~Gorman, X.S. Li, and F.-H. Rouet.
\newblock {A Robust Parallel Preconditioner for Indefinite Systems Using Hierarchical Matrices and Randomized Sampling}.
\newblock In {\em IEEE International Parallel and Distributed Processing Symposium (IPDPS)}, pages 897--906, Orlando, USA, May 29 - June 2 2017. IEEE.

\bibitem{ghysels2016efficient}
Pieter Ghysels, Xiaoye~S Li, Fran{\c{c}}ois-Henry Rouet, Samuel Williams, and Artem Napov.
\newblock {An Efficient Multicore Implementation of a Novel HSS-Structured Multifrontal Solver Using Randomized Sampling}.
\newblock {\em SIAM Journal on Scientific Computing}, 38(5):S358--S384, 2016.

\bibitem{golub2013MatrixComputations}
Gene~H. Golub and Charles~F. Van~Loan.
\newblock {\em Matrix Computations}.
\newblock Johns Hopkins University Press, Baltimore, fourth edition, 2013.

\bibitem{gorman2019robust}
Christopher Gorman, Gustavo Ch{\'a}vez, Pieter Ghysels, Th{\'e}o Mary, Fran{\c{c}}ois-Henry Rouet, and Xiaoye~Sherry Li.
\newblock {Robust and Accurate Stopping Criteria for Adaptive Randomized Sampling in Matrix-Free Hierarchically Semiseparable Construction}.
\newblock {\em SIAM Journal on Scientific Computing}, 41(5):S61--S85, 2019.

\bibitem{gu1996efficient}
Ming Gu and Stanley~C Eisenstat.
\newblock Efficient algorithms for computing a strong rank-revealing qr factorization.
\newblock {\em SIAM Journal on Scientific Computing}, 17(4):848--869, 1996.

\bibitem{hac02}
W.~Hackbusch, L.~Grasedyck, and S.~B\"orm.
\newblock {An Introduction to Hierarchical Matrices}.
\newblock {\em Math. Bohem.}, 127:229--241, 2002.

\bibitem{hac00}
W.~Hackbusch and B.~N. Khoromskij.
\newblock {A Sparse $\mathcal{H}$-Matrix Arithmetic. \text{Part-II}: Application to Multi-Dimensional Problems}.
\newblock {\em Computing}, 64:21--47, 2000.

\bibitem{halko2011}
N.~Halko, P.G. Martinsson, and J.A. Tropp.
\newblock {Finding Structure with Randomness: Probabilistic Algorithms for Constructing Approximate Matrix Decompositions}.
\newblock {\em SIAM Review}, 53(2):217--288, 2011.

\bibitem{johnson1984extensions}
William~B Johnson and Joram Lindenstrauss.
\newblock {Extensions of Lipschitz Mappings into a Hilbert Space}.
\newblock {\em Contemporary mathematics}, 26:28, 1984.

\bibitem{jones2016efficient}
Jeremiah~R Jones, Fran{\c{c}}ois-Henry Rouet, Keith~V Lawler, Eugene Vecharynski, Khaled~Z Ibrahim, Samuel Williams, Brant Abeln, Chao Yang, William McCurdy, Daniel~J Haxton, et~al.
\newblock {An Efficient Basis Set Representation for Calculating Electrons in Molecules}.
\newblock {\em Molecular Physics}, 114(13):2014--2028, 2016.

\bibitem{KaneNelson14}
D.M. Kane and J.~Nelson.
\newblock {Sparser Johnson-Lindenstrauss Transforms}.
\newblock {\em Journal of the ACM}, 61(1), 2014.

\bibitem{krahmer2011new}
Felix Krahmer and Rachel Ward.
\newblock {New and Improved Johnson–Lindenstrauss Embeddings via the Restricted Isometry Property}.
\newblock {\em SIAM Journal on Mathematical Analysis}, 43(3):1269--1281, 2011.

\bibitem{levitt2022linear}
James Levitt and Per-Gunnar Martinsson.
\newblock Linear-complexity black-box randomized compression of rank-structured matrices.
\newblock {\em SIAM Journal on Scientific Computing}, 46(3):A1747--A1763, 2024.

\bibitem{liu2016hss}
Yang Liu, Han Guo, and Eric Michielssen.
\newblock {An HSS Matrix-Inspired Butterfly-Based Direct Solver for Analyzing Scattering From Two-Dimensional Objects}.
\newblock {\em IEEE Antennas and Wireless Propagation Letters}, 16:1179--1183, 2016.

\bibitem{martinsson2011fast}
Per-Gunnar Martinsson.
\newblock {A Fast Randomized Algorithm for Computing a Hierarchically Semiseparable Representation of a Matrix}.
\newblock {\em SIAM Journal on Matrix Analysis and Applications}, 32(4):1251--1274, 2011.

\bibitem{martinsson_tropp_2020}
Per-Gunnar Martinsson and Joel~A. Tropp.
\newblock {Randomized Numerical Linear Algebra: Foundations and Algorithms}.
\newblock {\em Acta Numerica}, 29:403–572, 2020.

\bibitem{nelson2013osnap}
Jelani Nelson and Huy~L Nguy{\^e}n.
\newblock {OSNAP: Faster Numerical Linear Algebra Algorithms via Sparser Subspace Embeddings}.
\newblock In {\em 2013 ieee 54th annual symposium on foundations of computer science}, pages 117--126. IEEE, 2013.

\bibitem{stewart2008block}
GW~Stewart.
\newblock {Block Gram--Schmidt Orthogonalization}.
\newblock {\em SIAM Journal on Scientific Computing}, 31(1):761--775, 2008.

\bibitem{strumpack_web}
{STRUMPACK: STRUctured Matrix PACKage}.
\newblock \url{http://portal.nersc.gov/project/sparse/strumpack/}.

\bibitem{tropp2011improved}
Joel~A Tropp.
\newblock {Improved Analysis of the Subsampled Randomized Hadamard Transform}.
\newblock {\em Advances in Adaptive Data Analysis}, 3(01n02):115--126, 2011.

\bibitem{vershynin2018high}
Roman Vershynin.
\newblock {\em High-Dimensional Probability: An Introduction with Applications in Data Science}, volume~47.
\newblock Cambridge university press, 2018.

\bibitem{wang2013efficient}
Shen Wang, Xiaoye~S Li, Jianlin Xia, Yingchong Situ, and Maarten~V De~Hoop.
\newblock {Efficient Scalable Algorithms for Solving Dense Linear Systems with Hierarchically Semiseparable Structures}.
\newblock {\em SIAM Journal on Scientific Computing}, 35(6):C519--C544, 2013.

\bibitem{woodruff2014sketching}
David~P Woodruff.
\newblock {Sketching as a Tool for Numerical Linear Algebra}.
\newblock {\em Foundations and Trends{\textregistered} in Theoretical Computer Science}, 10(1--2):1--157, 2014.

\bibitem{woolfe2008fast}
Franco Woolfe, Edo Liberty, Vladimir Rokhlin, and Mark Tygert.
\newblock A fast randomized algorithm for the approximation of matrices.
\newblock {\em Applied and Computational Harmonic Analysis}, 25(3):335--366, 2008.

\bibitem{xia2010fast}
Jianlin Xia, Shivkumar Chandrasekaran, Ming Gu, and Xiaoye~S Li.
\newblock {Fast Algorithms for Hierarchically Semiseparable Matrices}.
\newblock {\em Numerical Linear Algebra with Applications}, 17(6):953--976, 2010.

\bibitem{xia2012superfast}
Jianlin Xia, Yuanzhe Xi, and Ming Gu.
\newblock A superfast structured solver for toeplitz linear systems via randomized sampling.
\newblock {\em SIAM Journal on Matrix Analysis and Applications}, 33(3):837--858, 2012.

\end{thebibliography}

\newpage
\renewcommand{\appendixname}{Supplementary Material}
\appendix

\section{Frobenius Norm Bounds Additional Notes and Proofs}

\subsection{Notes on \cref{thm:gaussian-estimator}} \label{sec:comment-on-avron-results}

The results in \cite{avron2011RandomizedAlgorithms} are concerned with stochastic trace estimation.
When $A$ is real, \cref{thm:gaussian-estimator} follows directly from Theorem~5.2 in \cite{avron2011RandomizedAlgorithms} since
\begin{equation}
    \| A R \|_F^2 = \trace(R^T A^T A R) = \sum_{i=1}^d R^T_{i:} A^T A R_{i:},
\end{equation}
where $R_{i:}$ is the $i$th row of $R$.

When $A$ is complex, we may write it as $A = B + \hat{\imath} C$ where $B, C \in \R^{m \times n}$.
Since the equations in \cref{eq:complex-as-real-matrix} then hold and since there is no $m$-dependence in \cref{thm:gaussian-estimator}, the result for the complex case follows immediately with no modification to the theorem statements.

\subsection{\texorpdfstring{Proof of \Cref{thm:SJLT-F-norm}}{Proof of Theorem 
4.5}}\label{appendix:sjlt-fro-norm}

The proof follows the proof of Theorem~5 in \cite{cohen2018simple} with adaptions made for the matrix case.
We first consider the case when $A$ is real.
For notational simplicity, let $X = A^T$ and note that $\|A R\|_F = \| R^T X \|_F$.
Following the notation in \cite{cohen2018simple}, let $\eta_{ij}$ for $(i,j) \in [d] \times [n]$ be Bernoulli random variables which indicate if the element on position $(i,j)$ of $R^T$ is nonzero.
Moreover, let $\sigma_{ij}$ for $(i,j) \in [d] \times [n]$ be independent Rademacher random variables taking values in $\{-1,1\}$ which indicate the sign of the nonzero entries in $R^T$.
Then, the random matrix $R^T$ defined elementwise via
\begin{equation} \label{eq:R-elementwise}
    R^T_{ij} = \eta_{ij} \sigma_{ij} / \sqrt{\alpha}
\end{equation}
is either a graph or block constructed SJLT depending on how the $\eta_{ij}$ are drawn.
In particular, note that $\eta_{ij}$ and $\eta_{i' j'}$ are independent for all $i,i' \in [d]$ if $j \neq j'$, but the random variables $\eta_{i j}$ and $\eta_{i' j}$ are not independent in general.

It is straightforward to show that
\begin{equation} \label{eq:random-minus-deterministic-F-norm}
    \|R^T X\|_F^2 - \|X\|_F^2 = \frac{1}{\alpha} \sum_{\ell=1}^m \sum_{i=1}^d \sum_{\substack{j,j'=1 \\ j \neq j'}}^n \eta_{ij} \eta_{ij'} \sigma_{ij} \sigma_{ij'} x_{j \ell} x_{j' \ell}.
\end{equation}
Define the matrices $\tilde{X}^{(i)} \in \R^{n \times m}$ for $i \in [d]$ elementwise via
\begin{equation}
    \tilde{x}^{(i)}_{j\ell} = \eta_{ij} x_{j \ell}.
\end{equation}
Let $A_{X,\eta} \in \R^{dn \times dn}$ be block diagonal with the $i$th $n \times n$ block defined by $\frac{1}{\alpha} (\tilde{X}^{(i)} \tilde{X}^{(i)T})^\circ$, where the function $(\cdot)^\circ$ takes a square matrix as input and returns the same matrix but with the diagonal elements set to zero.
Moreover, with a slight overloading of notation, let $\sigma \in \R^{dn}$ denote the vector whose $(j+(i-1)d)$th entry is $\sigma_{ij}$, i.e.,
\begin{equation}
    \sigma = 
    \begin{bmatrix}
        \sigma_{11} & \cdots & \sigma_{1n} & \sigma_{21} & \cdots & \sigma_{2n} & \cdots & \sigma_{k1} & \cdots & \sigma_{kn}
    \end{bmatrix}^T.
\end{equation}
The expression in \eqref{eq:random-minus-deterministic-F-norm} can now be written as the quadratic form
\begin{equation} \label{eq:quadratic-form}
    \|R^T X\|_F^2 - \|X\|_F^2 = \sigma^T A_{X,\eta} \sigma.
\end{equation}

For some random variable $Y$, recall the definition of the $\mathcal{L}^q$-norm for $1 \leq q \leq \infty$:
\begin{equation}
    \| Y \|_q = (\mathbb{E} |Y|^q)^{1/q}.
\end{equation}
We will additionally add superscripts $\eta$ and $\sigma$ to denote $\mathcal{L}^q$-norms and expectations with respect to the variables $(\eta_{ij})$ and $(\sigma_{ij})$ only, for example
\begin{equation}
   \| Y \|_{q, \eta} = (\mathbb{E}_{\eta} | Y |^q )^{1/q},
\end{equation}
where $q := \lceil 2 \log(1/\delta) \rceil > 1$. 
Due to independence between the two sets of variables $(\eta_{ij})$ and $(\sigma_{ij})$, we have $\mathbb{E} Y = \mathbb{E}_\eta \mathbb{E}_\sigma Y$, and consequently
\begin{equation}
    \| \sigma^T A_{X,\eta} \sigma \|_q = \| \| \sigma^T A_{X,\eta} \sigma \|_{q,\sigma} \|_{q, \eta}.
\end{equation}
Applying the Hanson-Wright inequality (Theorem~3 in \cite{cohen2018simple}) to the innermost norm in the expression above followed by the triangle inequality yields
\begin{equation} \label{eq:Hanson-Wright-bound}
    \| \sigma^T A_{X,\eta} \sigma \|_q \leq C_1 (\sqrt{q} \, \| \|A_{X,\eta}\|_F \|_{q,\eta} + q \, \| \|A_{X,\eta}\| \|_{q,\eta}),
\end{equation}
where $C_1$ is an absolute constant.

Now, we bound $\| \| A_{X,\eta} \|_F \|_{q,\eta}$.
To that end, note that
\begin{equation}
\begin{aligned}
    \| \| A_{X,\eta} \|_F \|_{q,\eta} 
    &= \| \| A_{X,\eta} \|_F^2 \|_{q/2,\eta}^{1/2} \\
    &\leq \| \| A_{X,\eta} \|_F^2 \|_{q,\eta}^{1/2} \\
    &= \frac{1}{\alpha} \Big\| \sum_{\substack{j, j'=1 \\ j \neq j'}}^n (X X^T)_{jj'}^2 \sum_{i=1}^d \eta_{ij} \eta_{ij'} \Big\|_{q,\eta}^{1/2},
\end{aligned}
\end{equation}
where the inequality follows from an application of Jensen's inequality, and the last equality uses the fact that $\eta_{ij}^2 = \eta_{ij}$.
Applying the triangle inequality gives
\begin{equation} \label{eq:A_F-bound}
    \| \| A_{X,\eta} \|_F \|_{q,\eta} 
    = \frac{1}{\alpha} \Big( \sum_{\substack{j, j'=1 \\ j \neq j'}}^n (X X^T)_{j j'}^2 \Big\| \sum_{i=1}^d \eta_{ij} \eta_{ij'} \Big\|_{q, \eta} \Big)^{1/2}.
\end{equation}
Since $q \geq 1$ is an integer, and since $\eta_{ij}^2=\eta_{ij}$, we can write
\begin{equation} \label{eq:eta-sum}
    \Big(\sum_{i=1}^d \eta_{ij} \eta_{ij'}\Big)^q = \sum_{S \in \mathcal{S}} \prod_{(i,j) \in S} \eta_{ij}
\end{equation}
for some appropriate set $\mathcal{S}$ of subsets of $[k] \times [n]$ (i.e., each $S \in \mathcal{S}$ satisfies $S \subset [d] \times [n]$).
One property of both the graph and block constructions of SJLT is that 
\begin{equation}
    \mathbb{E} \prod_{(i,j) \in S} \eta_{ij} \leq \prod_{(i,j) \in S} \mathbb{E}\eta_{ij} = (\alpha/d)^{|S|}
\end{equation}
for any $S \subset [d] \times [n]$; see the discussion in Section~2 of \cite{cohen2018simple} for details.
For $(i,j) \in [d] \times [n]$, let $\tilde{\eta}_{ij}$ be \emph{independent} Bernoulli random variables with $\mathbb{E}\tilde{\eta}_{ij} = \mathbb{E} \eta_{ij} = \alpha/d$.
Then, since $\mathbb{E} \prod_{(i,j) \in S} \tilde{\eta_{ij}} = (\alpha/d)^{|S|}$, it follows that 
\begin{equation}
     \mathbb{E} \prod_{(i,j) \in S} \eta_{ij} \leq \mathbb{E} \prod_{(i,j) \in S} \tilde{\eta_{ij}}.
\end{equation}
Combining this with \eqref{eq:eta-sum} gives
\begin{equation} \label{eq:eta-bound}
    \Big\| \sum_{i=1}^d \eta_{ij} \eta_{ij'} \Big\|_{q, \eta} \leq \Big\| \sum_{i=1}^d \tilde{\eta}_{ij} \tilde{\eta}_{ij'} \Big\|_{q, \eta}.
\end{equation}
Note that for $j \neq j'$ it holds that $\mathbb{P}(\tilde{\eta}_{ij} \tilde{\eta}_{ij'} = 1) = (\alpha/d)^2$ due to independence.
Therefore, $\sum_{i=1}^d \tilde{\eta}_{ij} \tilde{\eta}_{ij'}$ follows a $\Binomial(d, (\alpha/d)^2)$ distribution.
It follows from Lemma~2
\footnote{
    In the notation of \cite{cohen2018simple}, the condition $B<e$ in the lemma is satisfied if $C_k > 4/e$.
    Our absolute constant $C_d$ is chosen so that it satisfies this.
} 
in \cite{cohen2018simple} that 
\begin{equation} \label{eq:binomial-bound}
    \Big\| \sum_{i=1}^d \tilde{\eta}_{ij} \tilde{\eta}_{ij'} \Big\|_q 
    \leq C_2 \frac{\alpha^2}{k},
\end{equation}
where $C_2$ is an absolute constant.
Combining \eqref{eq:A_F-bound}, \eqref{eq:eta-bound} and \eqref{eq:binomial-bound} now gives
\begin{equation} \label{eq:bound-1st-term}
    \| \| A_{X,\eta} \|_F \|_{q,\eta} 
    \leq \sqrt{\frac{C_2}{k}} \| X \|_F^2.
\end{equation}

Next, we bound $\|A_{X,\eta}\|$.
Since $A_{X,\eta}$ is block-diagonal, its two norm is equal to the maximum two norm of its sub-blocks: $\|A_{X,\eta}\| = \max_{i \in [d]} \| \frac{1}{\alpha} (\tilde{X}^{(i)} \tilde{X}^{(i) T})^\circ\|$.
We have
\begin{equation}
\begin{aligned}
    \| (\tilde{X}^{(i)} \tilde{X}^{(i) T})^\circ \|
    &= \Big\| \tilde{X}^{(i)} \tilde{X}^{(i) T} - \diagmat\Big( \Big(\sum_{\ell=1}^m \eta_{ij} x_{j\ell}^2 \Big)_j \Big) \Big\| \\
    &\leq \max \Big\{ \|\tilde{X}^{(i)} \tilde{X}^{(i) T}\|, \; \Big\| \diagmat\Big( \Big(\sum_{\ell=1}^m \eta_{ij} x_{j\ell}^2 \Big)_j \Big) \Big\| \Big\} \\
    &\leq \|X\|_F^2,
\end{aligned}
\end{equation}
where the first inequality is due to the fact that both $\tilde{X}^{(i)} \tilde{X}^{(i)T}$ and $\diagmat((\sum_\ell \eta_{ij} x_{jl}^2)_j)$ are positive semi-definite.
It follows that
\begin{equation} \label{eq:bound-2nd-term}
    \|A_{X,\eta}\| \leq \frac{1}{\alpha} \|X\|_F^2.
\end{equation}

Inserting \eqref{eq:bound-1st-term} and \eqref{eq:bound-2nd-term} into \eqref{eq:Hanson-Wright-bound}, and inserting the values of $q$, $d$ and $\alpha$ gives
\begin{equation}
    \| \sigma^T A_{X,\eta} \sigma \|_q \leq \eps C_1 \Big(2 \sqrt{\frac{C_2}{C_d}} + \frac{4}{C_d} \Big) \|X\|_F^2.
\end{equation}

Finally, note that
\begin{equation} \label{eq:final-line}
\begin{aligned}
    \mathbb{P}(| \|R^T X\|_F^2 - \|X\|_F^2 | > \eps \|X\|_F^2) 
    &= \mathbb{P}(| \sigma^T A_{X,\eta} \sigma | > \eps \|X\|_F^2) \\
    &\leq \eps^{-q} \| X \|_F^{-2q} \| \sigma^T A_{X,\eta} \sigma \|_q^q \\
    &\leq \delta,
\end{aligned}
\end{equation}
where the first equality follows from \eqref{eq:quadratic-form}, the first inequality is Markov's inequality, and the second inequality holds with an appropriate choice
\footnote{
    If $C_d$ is chosen so that $C_1(2 \sqrt{C_2/C_d} + 4/C_d) < 1/\sqrt{e}$ is satisfied, then second line in \eqref{eq:final-line} is less than $1/e^{\log(1/\delta)} = \delta$.
    Since $C_1$ and $C_2$ are absolute constants, the absolute constant $C_d$ can be chosen so that it satisfies this requirement.
}
of $C_d$.

This completes the proof for the case when $A$ is real. 
Since there is no $m$-dependence in \cref{thm:SJLT-F-norm}, the case when $A$ is complex follows directly using the argument in \cref{sec:comment-on-avron-results}.

\section{Rangefinder Bounds Additional Notes and Proofs}

\subsection{Lemmas for Proof of \Cref{thm:rangefinderJL}} \label{appendix:jl-rangefinder}
In this section, we recall a theorem from \cite{halko2011} and prove two lemmas which we leverage in the proof of \Cref{thm:rangefinderJL}.  

\begin{theorem}[Theorem 9.1 from \cite{halko2011}, deterministic bound]\label{thm:halkobd}
    Let $A \in \C^{m \times n}$ have SVD $A = U\Sigma V^*$, and fix $r \geq 0$ and oversampling parameter $p \geq 0$. Choose a test matrix $R \in \R^{n \times d}$ and construct $Y = A R = Q \Omega$ with $P_Y = Q Q^*$. Partition $\Sigma$ as in \cref{def:svd}, and define $R_1, R_2$ as in \cref{eq:orthTimesR}. Assuming that $R_1$ has full row rank, the approximation error satisfies
    \begin{equation}
    \normsq{(I - P_Y)A} \leq  \|\Sigma_2\|^2 + \|\Sigma_2R_2 R_1^\dagger\|^2.
    \end{equation}
\end{theorem}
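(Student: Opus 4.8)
The plan is to prove the bound by combining three standard ingredients: unitary invariance of the spectral norm, monotonicity of orthogonal projectors under range inclusion, and one explicit algebraic estimate for a structured projector. First I would rotate into the coordinate system of the SVD. Since $Y = AR^T = U\Sigma\begin{bmatrix} R_1 \\ R_2\end{bmatrix}$ and $U$ has orthonormal columns, the residual $\normsq{(I - P_Y)A}$ is left unchanged by measuring in the basis given by $U$ on the left and $V$ on the right; consequently the dependence on $A$ enters only through $\Sigma$ and the blocks $R_1, R_2$, and it suffices to analyze a residual of the form $(I-P_B)\Sigma$ for a suitable projector $P_B$.

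The key reduction uses the monotonicity principle: if $\range(Z)\subseteq\range(Y)$ then $\normsq{(I-P_Y)A}\le\normsq{(I-P_Z)A}$, because $\range(Z)\subseteq\range(Y)$ gives $(I-P_Y)=(I-P_Y)(I-P_Z)$ and $I-P_Y$ is a contraction. I would apply this with $Z \defeq Y R_1^\dagger$. Since $R_1$ is assumed to have full row rank, $R_1 R_1^\dagger = I_r$, so the top block of $Z$ collapses and $Z = U\begin{bmatrix}\Sigma_1 \\ \Sigma_2 R_2 R_1^\dagger\end{bmatrix}$. Assuming $\Sigma_1$ is nonsingular (the general case follows by a continuity argument, or by absorbing any zero singular values into $\Sigma_2$), the range of this matrix equals $U\cdot\range(B)$ with $B=\begin{bmatrix} I \\ F\end{bmatrix}$ and $F \defeq \Sigma_2 R_2 R_1^\dagger \Sigma_1^{-1}$. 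Unitary invariance then gives $\normsq{(I-P_Z)A}=\normsq{(I-P_B)\Sigma}$.

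The heart of the argument is bounding $\normsq{(I-P_B)\Sigma}$. Writing $P_B = B(B^*B)^{-1}B^*$ with $B^*B = I + F^*F$ and simplifying the four blocks of $I - P_B$ with the push-through identities $(I+F^*F)^{-1}F^* = F^*(I+FF^*)^{-1}$ and $I - F(I+F^*F)^{-1}F^* = (I+FF^*)^{-1}$, I expect the product to factor as $(I-P_B)\Sigma = D\begin{bmatrix}-F\Sigma_1 & \Sigma_2\end{bmatrix}$, where $D = \begin{bmatrix} -F^*(I+FF^*)^{-1} \\ (I+FF^*)^{-1}\end{bmatrix}$. A direct computation of $D^*D$ then collapses to $D^*D = (I+FF^*)^{-1}\preceq I$, so $\norm{D}\le 1$. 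Combining submultiplicativity with the block-row estimate $\norm{\begin{bmatrix}-F\Sigma_1 & \Sigma_2\end{bmatrix}}^2 \le \norm{F\Sigma_1}^2 + \norm{\Sigma_2}^2$ yields $\normsq{(I-P_B)\Sigma}\le\norm{\Sigma_2}^2+\norm{F\Sigma_1}^2$, and substituting $F\Sigma_1 = \Sigma_2 R_2 R_1^\dagger$ completes the chain of inequalities.

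I expect the algebraic factorization in the third paragraph to be the main obstacle: recognizing that both block-columns of $(I-P_B)\Sigma$ share the common left factor $D$ is the nonobvious step, and it hinges on the push-through identities that make the diagonal and off-diagonal blocks align. The nonsingularity of $\Sigma_1$ is only a minor technical wrinkle, which I would dispatch by a perturbation or continuity argument.
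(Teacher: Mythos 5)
You should first be clear about what you are comparing against: the paper never proves \cref{thm:halkobd} at all; it is quoted verbatim as Theorem~9.1 of \cite{halko2011} and used as a black box in the proofs of \cref{thm:rangefinderJL} and \cref{range:sjlt}. The meaningful comparison is therefore with the original Halko--Martinsson--Tropp argument, and your proposal reproduces its skeleton: reduction by unitary invariance so that only $\Sigma$, $R_1$, $R_2$ matter; the monotonicity principle $\range(Z)\subseteq\range(Y)\Rightarrow\normsq{(I-P_Y)A}\leq\normsq{(I-P_Z)A}$; and the deflation $Z=YR_1^\dagger$, whose range (for nonsingular $\Sigma_1$) equals that of $U\bigl[\begin{smallmatrix} I \\ F \end{smallmatrix}\bigr]$ with $F=\Sigma_2R_2R_1^\dagger\Sigma_1^{-1}$. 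Where you genuinely depart from the source is the endgame. Halko et al.\ pass to $\norm{\Sigma^*(I-P_B)\Sigma}$ via the projector identity, apply the L\"owner bounds $I-(I+F^*F)^{-1}\preceq F^*F$ and $I-F(I+F^*F)^{-1}F^*\preceq I$ to the diagonal blocks, and invoke their norm inequality $\norm{M}\leq\norm{M_{11}}+\norm{M_{22}}$ for positive-semidefinite block matrices. You instead factor $I-P_B=D\bigl[\,-F\;\;I\,\bigr]$ with $D^*D=(I+FF^*)^{-1}\preceq I$; I checked the push-through algebra and this identity, and both are correct, so the contraction bound $\norm{D}\leq 1$ together with $\normsq{\bigl[\,X\;\;Y\,\bigr]}\leq\normsq{X}+\normsq{Y}$ closes the proof legitimately. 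Your finish is arguably slicker, since it avoids the PSD block-matrix lemma entirely.

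One caution on the degenerate case. Of your two fallbacks for singular $\Sigma_1$, the continuity argument is the fragile one: orthogonal projectors are not continuous across rank-dropping limits, and a subsequential limit of the perturbed projectors projects onto a subspace $W$ that may strictly contain $\range(Y)$; this would bound $\normsq{(I-P_W)A}$, which by monotonicity is \emph{smaller} than the quantity $\normsq{(I-P_Y)A}$ you need to control, so the inequality points the wrong way. Fortunately the case is trivial for a structural reason, which is the correct version of your ``absorbing'' remark: if $\Sigma_1$ is singular then, since the singular values are ordered, $\Sigma_2=0$; then $A=U\bigl[\begin{smallmatrix}\Sigma_1 & 0\\ 0 & 0\end{smallmatrix}\bigr]V^*$ and $YR_1^\dagger=U\bigl[\begin{smallmatrix}\Sigma_1 \\ 0\end{smallmatrix}\bigr]$, so $\range(A)=\range(YR_1^\dagger)\subseteq\range(Y)$, the left-hand side vanishes, and the bound holds as $0\leq 0$. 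State it that way rather than appealing to continuity.
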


Next, we state and prove two additional lemmas that we apply to prove \Cref{thm:rangefinderJL}.

The first lemma, \Cref{lem:sketchnorm} provides an upper bound for the 2-norm of any JL sketching operator.

\begin{lemma}[2-norm of sketch matrix]\label{lem:sketchnorm}
    Let $R \in \R^{d \times n}$ be a distributional JL sketching operator drawn from a $(n,d,\frac{\delta}{n},\eps)$-JL distribution such that $\eps,\delta \in (0,1)$ and $d < n$.
    Then, with probability $1-\delta$, we have $\norm{R} \leq \sqrt{n(1+\eps)}$.
\end{lemma}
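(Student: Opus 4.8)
The plan is to reduce the operator-norm bound to a statement about the action of $R$ on a fixed finite set of vectors, namely the standard basis, so that the single-vector guarantee in \cref{def:distJL} can be applied directly. The key observation is that the operator $2$-norm is dominated by the Frobenius norm, $\norm{R} \leq \norm{R}_F$, together with the identity $\normsq{R}_F = \sum_{j=1}^n \normsq{R e_j}$, where $e_1,\dots,e_n$ denote the canonical basis vectors of $\R^n$. Thus it suffices to control $\normsq{R e_j}$ simultaneously for all $j \in [n]$, and the length-preservation property gives exactly this kind of control for each individual $e_j$.

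To carry this out, I would apply the $(n,d,\delta/n,\eps)$-JL property to each basis vector $e_j$. Since $\normsq{e_j} = 1$, for each fixed $j$ we have $\normsq{R e_j} \leq 1+\eps$ except on an event of probability at most $\delta/n$. Taking a union bound over the $n$ basis vectors, the bound $\normsq{R e_j} \leq 1+\eps$ holds for every $j \in [n]$ simultaneously with probability at least $1 - n \cdot (\delta/n) = 1-\delta$. On this event,
\begin{equation*}
\normsq{R} \leq \normsq{R}_F = \sum_{j=1}^n \normsq{R e_j} \leq n(1+\eps),
\end{equation*}
and taking square roots yields $\norm{R} \leq \sqrt{n(1+\eps)}$, as claimed.

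There is no serious obstacle in this argument; the only point requiring care is the bookkeeping of the failure probability. Because \cref{def:distJL} guarantees length preservation for one vector at a time, one cannot directly bound $\max_{\norm{x}=1}\norm{Rx}$ over the entire unit sphere using the JL property alone. Passing through the Frobenius norm replaces this supremum over a continuum by a sum over just $n$ vectors, and this is precisely why the hypothesis asks that $R$ be drawn from a $(n,d,\delta/n,\eps)$-JL distribution rather than a $(n,d,\delta,\eps)$ one: the factor $1/n$ in the per-vector failure probability is exactly what the union bound over the $n$ basis vectors consumes. I note that the assumption $d < n$ is not actually used in the derivation; it merely fixes the regime of interest in which the lemma will later be applied.
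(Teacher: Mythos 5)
Your proof is correct and follows essentially the same route as the paper's: both apply the JL guarantee to the $n$ canonical basis vectors and take a union bound, which is precisely what the $\delta/n$ per-vector failure probability is budgeted for. The only difference is cosmetic --- the paper passes from the column bounds $\|Re_i\| \leq \sqrt{1+\eps}$ to $\|R\|$ via the triangle inequality and Cauchy--Schwarz, whereas you use the domination $\|R\| \leq \|R\|_F$; both yield the identical bound $\sqrt{n(1+\eps)}$.
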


\begin{proof}
    Let $e_1, \ldots, e_n \in \R^n$ denote the canonical basis vectors.
    Note that
    \begin{equation}
        \| R \| 
        = \max_{\substack{y \in \R^n \\ \|y\| = 1}} \|R y\| 
        = \max_{\substack{\beta \in \R^n \\ \|\beta\|=1}} \Big\|R \sum_{i=1}^n \beta_i e_i\Big\| 
        \leq \max_{\substack{\beta \in \R^n \\ \|\beta\|=1}} \sum_{i=1}^n |\beta_i| \, \|R e_i\|.
    \end{equation}
    Since $\Pr[\|R e_i\| \leq \sqrt{1+\eps}] \geq \delta/n$, a union bound therefore gives that the following holds with probability at least $1-\delta$:
    \begin{equation}
        \|R\| 
        \leq \max_{\substack{\beta \in \R^n \\ \|\beta\|=1}} \sum_{i=1}^n |\beta_i| \, \|R e_i\|
        \leq \max_{\substack{\beta \in \R^n \\ \|\beta\|=1}} \sum_{i=1}^n |\beta_i| \sqrt{1+\eps}
        \leq \sqrt{n(1+\eps)},
    \end{equation}
    where the last equality follows from the Cauchy--Schwarz inequality.
\end{proof}

The second lemma provides a lower bound on the smallest singular value of a JL sketching operator times a tall-and-skinny matrix $V$. This bound is required when applying \cref{thm:halkobd}.

\begin{lemma}[JL implies subspace embedding, Theorem 2.3 from \cite{woodruff2014sketching}] \label{lem:subspace-embed}
Let $R \in \R^{d \times n}$ be a distributional JL sketching operator drawn from a $(n,d,\frac{\delta}{5^{2r}},\frac{\eps}{12})$--JL distribution with $\frac{\eps}{12}, \delta \in (0,1)$. Let $V \in \C^{n \times r}$ where $r < d < n$ be a full rank matrix. Then with probability at least $1-\delta$ the following holds:
\begin{equation} \label{eq:subspace-embedding}
    \abs{\normsq{RVx} - \normsq{Vx}} < \eps\normsq{Vx} \qquad \text{for all } x \in \R^r. 
\end{equation}
\end{lemma}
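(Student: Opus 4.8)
The plan is to prove the stronger \emph{subspace embedding} statement that $R$ approximately preserves all lengths in the column space of $V$, and then read off the claimed inequality. Since the asserted bound is homogeneous in $x$ and unchanged if $V$ is replaced by any matrix with the same column space, I would first reduce to an orthonormal basis. Writing a thin SVD $V = U\Sigma W^*$ with $U \in \C^{n\times r}$ having orthonormal columns, it suffices to show that the Hermitian operator $A := (RU)^*(RU) - I$ satisfies $\norm{A}\le\eps$: indeed, taking $z = \Sigma W^* x$ gives $Uz = Vx$ and $\norm{z}=\norm{Vx}$, so $\abs{\normsq{RVx}-\normsq{Vx}} = \abs{z^* A z}\le \norm{A}\,\normsq{z}= \eps\normsq{Vx}$. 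The task thus becomes a uniform length-distortion bound over the unit sphere of $\range(U)$, a subspace that, as a set in $\C^n\cong\R^{2n}$, has real dimension $2r$.

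The core is a covering argument. I would fix a $\gamma$-net $\mathcal{N}$ (with $\gamma$ a constant near $1/2$) of the unit sphere of $\range(U)$; because this sphere sits in a real space of dimension $2r$, a standard volumetric bound gives $\abs{\mathcal N}\le 5^{2r}$, which is exactly the covering number that dictates the JL parameters. The JL property of \cref{def:distJL} controls one fixed vector at a time, so the idea is to apply it to each $w\in\mathcal N$ and union bound. Drawing $R$ from a $(n,d,\delta/5^{2r},\eps/12)$-JL distribution makes the union bound over these $\le 5^{2r}$ events fail with probability at most $\delta$, so that, on the good event, $\abs{\normsq{Rw}-\normsq{w}}\le(\eps/12)\normsq{w}$ holds simultaneously for all $w\in\mathcal N$. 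Since $R$ is real while $w$ may be complex, each application is split across real and imaginary parts using $\normsq{Rw}=\normsq{R\,\Re w}+\normsq{R\,\Im w}$ and the corresponding identity for $\normsq{w}$; this only inflates the number of events and the constants by a fixed factor.

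On this event I would finish with the net-to-sphere extension. For the Hermitian operator $A$ one has the standard estimate $\norm{A}\le (1-2\gamma)^{-1}\max_{w\in\mathcal N}\abs{w^*A w}$, and $w^* A w = \normsq{Rw}-\normsq{w}$ is precisely the quantity already bounded at the net points by $\eps/12$. The extension constant $(1-2\gamma)^{-1}$, together with the real/imaginary bookkeeping, is absorbed by the slack of using the per-vector budget $\eps/12$ rather than $\eps$, yielding $\norm{A}\le\eps$. Undoing the orthonormal reduction of the first paragraph then delivers $\abs{\normsq{RVx}-\normsq{Vx}}<\eps\normsq{Vx}$ for all $x$ simultaneously, with probability at least $1-\delta$, as claimed.

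I expect the extension step and the attendant constant calibration to be the main obstacle. The hypothesis only supplies information at the finitely many net vectors, and the work is to transfer this into a uniform bound over the entire continuum of unit vectors in $\range(U)$, all while verifying that the covering number $5^{2r}$, the $(1-2\gamma)^{-1}$ factor, and the real/imaginary split jointly fit inside the margin created by the choice $\eps/12$ and the failure budget $\delta/5^{2r}$. A secondary technical point is the reduction to an orthonormal basis and the correct interpretation of the real operator $R$ acting on the possibly complex subspace $\range(V)$; once that is set up cleanly, the remaining inequalities are routine and follow the Gaussian argument of Theorem~2.3 in \cite{woodruff2014sketching} verbatim.
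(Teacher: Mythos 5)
Your skeleton (orthonormal reduction, a net on the unit sphere of the subspace, JL at finitely many points, union bound, then extension to the whole sphere) matches the paper's, but the extension step you chose is different, and it is exactly where the argument breaks: the constants cannot be calibrated to the stated hypothesis. Your route relies on the Hermitian quadratic-form estimate $\norm{A}\le (1-2\gamma)^{-1}\max_{w\in\mathcal{N}}\abs{w^*Aw}$, which requires $\gamma<1/2$ and is vacuous at $\gamma=1/2$. But the only way the volumetric bound $(1+2/\gamma)^{2r}$ for a net of a sphere of real dimension $2r$ stays below $5^{2r}$ is to take $\gamma\ge 1/2$. Concretely: to absorb the extension constant you need $(1-2\gamma)^{-1}\cdot\eps/12\le\eps$, i.e.\ $\gamma\le 11/24$, and then the net has at least $(1+48/11)^{2r}\approx 5.36^{2r}$ points, so a union bound with the per-vector failure budget $\delta/5^{2r}$ supplied by the hypothesis gives failure probability on the order of $(5.36/5)^{2r}\,\delta\gg\delta$. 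Conversely $\gamma=1/2$ keeps the net size at $5^{2r}$ but makes $(1-2\gamma)^{-1}$ infinite. There is no choice of $\gamma$ satisfying both constraints, so the "constant calibration" you flagged as the main obstacle is not routine bookkeeping; it is an actual obstruction to this route under the lemma's exact parameters. (Your reading of $5^{2r}$ as the covering number of a real-dimension-$2r$ sphere, while natural, is also not where the exponent comes from in the paper.)

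The paper's proof avoids the quadratic-form extension entirely, and its parameters are tailored to that different mechanism. It takes a $1/2$-net $\mathcal{N}$ of size $N\le 5^r$, and the exponent $2r$ arises because the union bound runs over the roughly $N^2\le 5^{2r}$ vectors consisting of the net points together with all pairwise sums $y+y'$ with $y,y'\in\mathcal{N}$. The JL property at those sums feeds a polarization estimate (\cref{lemma:angle-preservation}) giving $\abs{\langle Ry,Ry'\rangle-\langle y,y'\rangle}\le 3\cdot\eps/12=\eps/4$ at all pairs of net points; a uniform bound over the sphere then follows from the successive-approximation expansion $y=\sum_{i\ge 0}\beta_i y^{(i)}$ with $y^{(i)}\in\mathcal{N}$ and $\abs{\beta_i}\le 2^{-i}$, which is valid precisely for a $1/2$-net and yields $\abs{\normsq{Ry}-\normsq{y}}\le\sum_{i,j}2^{-(i+j)}\cdot\eps/4=\eps$. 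Every constant in the hypothesis is consumed exactly: the $5^{2r}$ by the pairwise sums, the factor $3$ by polarization, the factor $4$ by the geometric series. To repair your proposal you would either have to switch to this bilinear-form-at-pairs mechanism, or weaken the lemma (a finer net with a correspondingly smaller per-point failure probability), which would prove a different statement than the one given.
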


We first state the following intermediate lemma to prove \Cref{lem:subspace-embed}, following the steps of \cite{woodruff2014sketching}.

\begin{lemma}[See page~12 of \cite{woodruff2014sketching}] \label{lemma:angle-preservation}
    Let $x, y \in \R^n$.
    If $|\| Rz \|^2 - \|z\|^2| \leq \eps$ for all $z \in \{x, y, x+y\}$, then 
    \begin{equation}
        |\langle Rx, Ry \rangle - \langle x, y \rangle| \leq 3 \eps \langle x, y \rangle.
    \end{equation}
\end{lemma}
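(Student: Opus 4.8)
The plan is to reduce the inner-product statement to the three length-preservation hypotheses via the polarization identity, exploiting the linearity of $R$. First I would record that, because $R$ is linear, $R(x+y) = Rx + Ry$, so expanding the squared norm gives $\|R(x+y)\|^2 = \|Rx\|^2 + 2\langle Rx, Ry\rangle + \|Ry\|^2$, and hence $\langle Rx, Ry\rangle = \frac{1}{2}\big(\|R(x+y)\|^2 - \|Rx\|^2 - \|Ry\|^2\big)$. The same identity with $R$ replaced by the identity map yields $\langle x, y\rangle = \frac{1}{2}\big(\|x+y\|^2 - \|x\|^2 - \|y\|^2\big)$.

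Next I would subtract these two identities so that the difference is written purely in terms of the three length-distortion quantities $\|Rz\|^2 - \|z\|^2$ for $z \in \{x+y, x, y\}$, namely
\begin{equation*}
\langle Rx, Ry\rangle - \langle x, y\rangle = \frac{1}{2}\Big[(\|R(x+y)\|^2 - \|x+y\|^2) - (\|Rx\|^2 - \|x\|^2) - (\|Ry\|^2 - \|y\|^2)\Big].
\end{equation*}
This step is precisely what explains why the hypothesis is imposed on the set $\{x, y, x+y\}$: these are exactly the three vectors whose lengths must be controlled for polarization to go through. I would then apply the triangle inequality and bound each of the three summands by the hypothesis $|\|Rz\|^2 - \|z\|^2| \leq \eps$, collapsing the right-hand side to a small multiple of $\eps$ built from the three contributions and the $\frac{1}{2}$ factor.

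The remaining work is bookkeeping of constants: collecting the three terms (together with the norms of $x$, $y$, and $x+y$, which under the unit-sphere normalization arising from the net in \cref{lem:subspace-embed} can be expressed via $\|x+y\|^2 = \|x\|^2 + 2\langle x,y\rangle + \|y\|^2$) to arrive at the stated bound. The step I would be most careful with is exactly this constant tracking and normalization, since polarization naturally produces an additive error of order $\eps$; recasting it into the claimed form requires the unit-norm convention and the expansion of $\|x+y\|^2$. No deeper idea is needed beyond linearity of $R$ and polarization, which together reduce the inner-product distortion entirely to the three length distortions already assumed small.
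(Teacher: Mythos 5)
Your proposal is correct and takes essentially the same route as the paper's own proof: both rest on the polarization identity applied to the three vectors $x$, $y$, $x+y$, followed by the triangle inequality over the three length-distortion terms. The only difference is bookkeeping --- the paper writes the distortions multiplicatively as factors $(1+\alpha_i)$ under the unit-norm normalization, whereas you subtract the two polarization identities and bound the additive distortions directly, which with the hypothesis as literally stated even yields the slightly sharper constant $3\eps/2$.
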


\begin{proof}
    The proof follows the argument on page~12 of \cite{woodruff2014sketching}.
    Without loss of generality we assume $\|x\| = \|y\| = 1$.
    Note that 
    \begin{equation}
    \begin{aligned}
        \langle Rx, Ry \rangle 
        &= \frac{1}{2} \big(\|R(x+y)\|^2 - \|Rx\|^2 - \|Ry\|^2\big) \\ 
        &= \frac{1}{2}\big((1+\alpha_1) \|x+y\|^2 - (1+\alpha_2)\|x\|^2 - (1+\alpha_3)\|y\|^2\big)\\
        &= \frac{1}{2} (2\alpha_1 - \alpha_2 - \alpha_3) + \alpha_1 \langle x, y \rangle.
    \end{aligned}
    \end{equation}
    Since each $|\alpha_i| \leq \eps$, it follows that
    \begin{equation}
        |\langle Rx, Ry \rangle - \langle x, y \rangle| \leq \frac{1}{2} 4 \eps + \eps = 3 \eps.
    \end{equation}
\end{proof}

\begin{proof}[Proof of Lemma~\ref{lem:subspace-embed}]
    The proof follows the discussion on pages 12--14 in \cite{woodruff2014sketching}.
    It is sufficient to show that the claim holds for $y=Vx$ when $y$ is unit length.
    Let $\mathcal{S} = \{y \in \range(V) \, : \, \|y\| = 1 \}$.
    Furthermore, let $\mathcal{N}$ be a 1/2-net for $\mathcal{S}$.
    It is possible to choose $\mathcal{N}$ such that $N := |\mathcal{N}| \leq 5^r$ (see Corollary~4.2.13 in \cite{vershynin2018high}).
    There are $N^2-N$ sums $x+y$ with distinct $x,y \in \mathcal{N}$. 
    Consequently, the following holds with probability at least $1-\delta$:
    \begin{equation}
    	\big| \|Rx\|^2 - \|x\|^2 \big| \leq \frac{\eps}{12} \qquad \text{for all } x \in \mathcal{N} \cup \{y + y' \, : \, y, y' \in \mathcal{N}\}.
    \end{equation}
    Due to Lemma~\ref{lemma:angle-preservation}, the following therefore holds with probability at least $1-\delta$:
    \begin{equation} \label{eq:angle-preservation}
    	\big| \langle Rx, Ry \rangle - \langle x, y \rangle\big| \leq \frac{\eps}{4} \qquad \text{for all } x, y \in \mathcal{N}.
    \end{equation}

    Any $y \in \mathcal{S}$ may be represented as
    \begin{equation}
        y = \sum_{i=0}^\infty \beta_i y^{(i)},
    \end{equation}
    where $|\beta_i| \leq 1/2^{i}$ and each $y^{(i)} \in \mathcal{N}$.
    Consequently, 
    \begin{equation}
    \begin{aligned}
        \|Ry\|^2 
        &= \sum_{i=0}^\infty \sum_{j=0}^\infty \beta_i \beta_j \langle Ry^{(i)}, Ry^{(j)} \rangle 
        = \sum_{i=0}^\infty \sum_{j=0}^\infty \beta_i \beta_j (\langle y^{(i)}, y^{(j)} \rangle + \alpha_{i,j}) \\
        &= \|y\|^2 + \sum_{i=0}^\infty \sum_{j=0}^\infty \beta_i \beta_j \alpha_{i,j},
    \end{aligned}
    \end{equation}
	where each $|\alpha_{i,j}| \leq \eps/4$ due to \eqref{eq:angle-preservation}.
	Consequently, we have
	\begin{equation}
		\big| \|Ry\|^2 - \|y\|^2 \big| \leq \sum_{i=0}^{\infty} \sum_{j=0}^\infty \frac{1}{2^{i+j}} \frac{\eps}{4} = \eps.
	\end{equation}
	
\end{proof}

\begin{remark} \label{rem:subspace-embedding-smallest-sv}
    The smallest singular value of any matrix $B$ satisfies (see, e.g., Theorem~8.6.1 in \cite{golub2013MatrixComputations})
    \begin{equation}
        \sigma_{\min}^2(B) = \min_{\|x\|=1} \|Bx\|^2.
    \end{equation}
    The statement in \eqref{eq:subspace-embedding} therefore implies
    \begin{equation} \label{eq:sigma-min-bound}
        \sigma^2_{\min}(RV) \geq (1-\eps) \sigma^2_{\min}(V),
    \end{equation}
    and consequently that $RV$ is of full rank since $\sigma^2_{\min}(V) > 0$ and $(1-\eps) > 0$.
\end{remark}

\begin{remark}    
    The exponential dependence on $r$ in the $(n,d,\frac{\delta}{5^{2r}}, \frac{\varepsilon}{12})$ in \cref{lem:subspace-embed} may seem alarming.
    However, for many JL sketching operator distributions the embedding dimension has a logarithmic dependence on $1/\delta$, which translates to a linear dependence on $r$.
    This is true for the Gaussian sketching operators, as well as for the SRHT and SJLT we consider in this paper.
\end{remark}
\subsection{Lemmas for Proof of \Cref{range:sjlt}}\label{appendix:sjlt-rangefinder}

We state \Cref{lem:sjlt-norm-bound,thm:sjlt-subspace-embedding} which are akin to \cref{lem:sketchnorm,lem:subspace-embed} but with stronger guarantees since they are restricted to SJLT matrices.

\begin{lemma} \label{lem:sjlt-norm-bound}
    Suppose $R \sim \SJLT(n,d,\alpha)$ with $n > d > \alpha$, and define $\mu = n \alpha / d$. 
    For any $t > 1$, it then holds that 
    \begin{equation} \label{eq:SJLT-spectral-norm-bound}
        \Pr[\| R \|_2^2 \geq t \mu] \leq d e^{-\mu} \Big(\frac{e}{t}\Big)^{t \mu}. 
    \end{equation}
    In particular, if $t > \max(e^2, \mu^{-1} \log(d/\delta)-1)$, then
    \begin{equation} \label{eq:SJLT-spectral-norm-bound-simplified}
        \Pr[\|R\|^2 \geq t \mu] < \delta.
    \end{equation}
\end{lemma}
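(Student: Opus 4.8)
The plan is to bound the spectral norm by a purely combinatorial quantity---the largest number of nonzero entries appearing in any row of $R$---and then control that quantity with a multiplicative Chernoff bound together with a union bound over the $d$ rows.

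First I would invoke the Schur test (equivalently, interpolation between the induced $\ell_1 \to \ell_1$ and $\ell_\infty \to \ell_\infty$ operator norms) to obtain
\begin{equation*}
    \norm{R}_2^2 \leq \Big(\max_j \sum_{i} \abs{R_{ij}}\Big)\Big(\max_i \sum_{j} \abs{R_{ij}}\Big).
\end{equation*}
In the SJLT each column has exactly $\alpha$ nonzero entries of magnitude $1/\sqrt{\alpha}$, so the first factor (the maximum absolute column sum) equals $\sqrt{\alpha}$ deterministically. Letting $N_i$ denote the number of nonzeros in row $i$, the second factor (the maximum absolute row sum) equals $\max_i N_i / \sqrt{\alpha}$. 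The $\sqrt{\alpha}$ factors cancel, leaving the deterministic bound $\norm{R}_2^2 \leq \max_i N_i$.

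Next I would pin down the law of $N_i$. For a fixed row $i$ and column $j$, the entry $R_{ij}$ is nonzero with probability $\alpha/d$ in both the graph construction (where $\binom{d-1}{\alpha-1}/\binom{d}{\alpha} = \alpha/d$) and the block construction (where row $i$ lies in a single block of size $d/\alpha$ and is selected with probability $\alpha/d$). Since the columns are sampled independently, $N_i$ is a sum of $n$ independent Bernoulli$(\alpha/d)$ indicators, so $N_i \sim \Binomial(n, \alpha/d)$ with mean exactly $\mu = n\alpha/d$. I would then apply the standard multiplicative Chernoff upper-tail bound $\Pr[N_i \geq t\mu] \leq (e^{t-1}/t^t)^{\mu}$ valid for $t > 1$, rewrite it in the equivalent form $e^{-\mu}(e/t)^{t\mu}$, and take a union bound over the $d$ rows to get $\Pr[\max_i N_i \geq t\mu] \leq d\, e^{-\mu}(e/t)^{t\mu}$. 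Combined with $\norm{R}_2^2 \leq \max_i N_i$ this establishes \eqref{eq:SJLT-spectral-norm-bound}.

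For the simplified form \eqref{eq:SJLT-spectral-norm-bound-simplified} I would use the two hypotheses on $t$ in turn: when $t > e^2$ we have $e/t \leq e^{-1}$, hence $(e/t)^{t\mu} \leq e^{-t\mu}$ and the bound is at most $d\, e^{-\mu(t+1)}$; and when $t > \mu^{-1}\log(d/\delta) - 1$ we have $\mu(t+1) > \log(d/\delta)$, which forces $d\, e^{-\mu(t+1)} < \delta$. The only steps requiring genuine care are verifying that the per-entry nonzero probability is $\alpha/d$ and that, for a fixed row, the nonzero indicators are independent across columns for \emph{both} constructions (so that the Chernoff bound legitimately applies to a true Binomial); granting this, the Schur test, the Chernoff inequality, and the union bound are all routine.
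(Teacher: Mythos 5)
Your proposal is correct and follows essentially the same route as the paper's proof: the bound $\norm{R}_2^2 \leq \norm{R}_1 \norm{R}_\infty$ (your Schur test, the paper's Corollary~2.3.2 of Golub--Van Loan) reducing the spectral norm to the maximum row count of nonzeros, then a Chernoff bound on the $\Binomial(n,\alpha/d)$ row counts with a union bound over the $d$ rows, and the identical case analysis on $t$ for the simplified tail. The only difference is cosmetic: you explicitly verify the per-entry nonzero probability $\alpha/d$ and the across-column independence for both SJLT constructions, which the paper leaves implicit when citing Chernoff's inequality.
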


\begin{proof}
    Recall that we may write $R$ elementwise as in \cref{eq:R-elementwise} where $\eta_{ij}$ for $(i,j) \in [d] \times [n]$ is Bernoulli random variables which indicate if the element on position $(i,j)$ of $R$ is nonzero.
    Our starting point is the following bound on the two norm:
    \begin{equation}
        \|R\|_2^2 \leq \| R \|_1 \| R \|_\infty = \max_{i \in [d]} \sum_{j=1}^n \eta_{ij},
    \end{equation}
    where the inequality is Corollary~2.3.2 in \cite{golub2013MatrixComputations}, and the equality follows from the standard definitions of the $1$- and $\infty$-norms (see Section~2.3.2 in \cite{golub2013MatrixComputations}).
    Consequently, 
    \begin{equation} \label{eq:SJLT-spectral-norm-bound-1}
    \begin{aligned}
        \Pr[\| R \|_2^2 \geq t \mu] 
        &\leq \Pr \Big[ \max_{i \in [d]} \sum_{j=1}^n \eta_{ij} \geq t \mu \Big] 
        = \Pr\Big[ \bigcup_{i \in [d]} \Big\{ \sum_{j=1}^{n} \eta_{ij \geq t \mu} \Big\} \Big] \\
        &\leq \sum_{i=1}^d \Pr\Big[\sum_{j=1}^{n} \eta_{ij} \geq t \mu \Big] 
        = d \Pr\Big[\sum_{j=1}^{n} \eta_{1j} \geq t \mu \Big],
    \end{aligned}
    \end{equation}
    where the second inequality follows from subadditivity of measure.
    Chernoff's inequality (see Theorem~2.3.1 in \cite{vershynin2018high}) gives that
    \begin{equation} \label{eq:chernoff}
        \Pr\Big[\sum_{j=1}^{n} \eta_{1j} \geq t \mu \Big] \leq e^{-\mu} \Big(\frac{e}{t}\Big)^{t \mu}.
    \end{equation}
    Combining \cref{eq:SJLT-spectral-norm-bound-1} and \cref{eq:chernoff} gives the result in \cref{eq:SJLT-spectral-norm-bound}.

    If additionally $t > \max(e^2, \mu^{-1} \log(d/\delta)-1)$, then the bound in \cref{eq:SJLT-spectral-norm-bound} simplifies to
    \begin{equation} 
        \Pr[\| R \|_2^2 \geq t \mu] 
        \leq d e^{-\mu} \Big(\frac{e}{t}\Big)^{t \mu}
        \leq d e^{-\mu} e^{- t \mu} < \delta.
    \end{equation}
    
\end{proof}

The following lemma appeared as Theorem 5 in \cite{nelson2013osnap}. 

\begin{lemma}[SJLT satisfies subspace embedding property, Theorem 5 from \cite{nelson2013osnap}] \label{thm:sjlt-subspace-embedding}
    Given $R \sim \SJLT(n,d,\alpha)$, $V \in \C^{n \times r}$ and $\eps,\delta \in (0,1)$. If $\alpha = \Theta(\log^3(r/\delta)/\eps)$ and $d = \Omega(r\log^6(r/\delta)/\eps^2)$ then the following holds with probability at least $1-\delta$: 
    \begin{equation}
        |\normsq{R V x} - \normsq{V x} | < \eps \normsq{V x} \qquad \text{for all $x \in \R^r$.}
    \end{equation}
    
\end{lemma}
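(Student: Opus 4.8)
Since this statement is quoted from \cite{nelson2013osnap} (where it is Theorem~5), the plan is to reproduce the oblivious-subspace-embedding (OSE) moment argument of that paper, which is exactly what produces the stated sparsity $\alpha = \Theta(\log^3(r/\delta)/\eps)$ and embedding dimension $d = \Omega(r\log^6(r/\delta)/\eps^2)$. First I would remove the dependence on the particular matrix $V$ by passing to an orthonormal basis. Because $V \in \C^{n\times r}$ has full rank, write $V = UW$ with $U \in \C^{n\times r}$ having orthonormal columns spanning $\range(V)$ (so $U^*U = I$) and $W \in \C^{r\times r}$ invertible. For $x \in \R^r$ set $y = Wx$; then $\normsq{Vx} = \normsq{Uy} = \normsq{y}$, $\normsq{RVx} = \normsq{RUy}$, and consequently $\normsq{RVx} - \normsq{Vx} = y^*\bigl((RU)^*(RU) - I\bigr)y$. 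Thus the claimed inequality for all $x$ follows from the single spectral-norm estimate $\norm{M} < \eps$ for the Hermitian error matrix $M \defeq (RU)^*(RU) - I \in \C^{r\times r}$, which I would establish over all of $\C^r$ (a fortiori over the real subspace $\{Wx : x \in \R^r\}$). Note the moment computation below is insensitive to whether $U$ is real or complex, requiring only $U^*U = I$.

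Next I would bound $\norm{M}$ through its even moments. For even $\ell$ one has $\norm{M}^\ell \leq \trace(M^\ell)$, so Markov's inequality gives
\begin{equation*}
    \Pr[\norm{M} \geq \eps] \leq \eps^{-\ell}\, \E\,\trace(M^\ell).
\end{equation*}
Using the elementwise description of $R$ (nonzeros equal to random signs scaled by $1/\sqrt{\alpha}$ placed on a random pattern with exactly $\alpha$ nonzeros per column, cf.\ \cref{eq:R-elementwise}), each entry of $M$ is a quadratic form in the entries of $R$, and expanding $\trace(M^\ell)$ yields a sum over closed length-$\ell$ walks through the $r$ subspace coordinates, where each summand is the expectation of a degree-$2\ell$ monomial in the sign variables and the sparsity-pattern indicators.

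The technical heart, and the main obstacle, is the combinatorial estimate of $\E\,\trace(M^\ell)$. Independence of the signs across columns together with their mean-zero property annihilates every monomial in which some sign variable occurs to an odd power, so only monomials indexed by multigraphs in which each sign appears an even number of times survive. The contribution of each surviving graph must then be controlled by grouping its edges, bounding the expectation of the product of the (dependent, hypergeometric) pattern indicators using $U^*U = I$ and the unit-norm of the columns of $U$, and summing over all assignments of row and column indices. Organizing these graphs by their connected-component structure, exactly as in the ``dot-product'' moment computation of \cite{nelson2013osnap}, shows that the dominant terms are tamed once $\alpha \gtrsim \log^3(r/\delta)/\eps$ and $d \gtrsim r\log^6(r/\delta)/\eps^2$; this is where the precise powers of the logarithm originate. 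Tracking which graphs contribute and how their weights aggregate is the genuinely delicate part of the argument.

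Finally, choosing $\ell$ to be an even integer of order $\log(1/\delta)$ and inserting the resulting moment bound into Markov's inequality yields $\Pr[\norm{M} \geq \eps] < \delta$, which is the asserted subspace-embedding guarantee. As an alternative route one could instead invoke a matrix-Chernoff / intrinsic-dimension inequality in the spirit of the trace-inequality analysis underlying \cite{cohen2018simple}, which shortens the combinatorics considerably but yields somewhat different logarithmic factors than the ones stated here; since the statement we must match carries the $\log^6$ dependence, the moment method of \cite{nelson2013osnap} is the appropriate vehicle.
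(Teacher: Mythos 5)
The paper offers no proof of this lemma at all---it is imported verbatim as Theorem~5 of \cite{nelson2013osnap}---and your outline faithfully reproduces the proof strategy of that cited source: reduce to an orthonormal basis $U$ of $\range(V)$ (so that the claim becomes $\norm{(RU)^*(RU)-I} < \eps$), bound this via $\Pr[\norm{M}\geq\eps] \leq \eps^{-\ell}\,\E\trace(M^\ell)$ for even $\ell \asymp \log(1/\delta)$, and control the trace by the graph-indexed moment combinatorics. Your skeleton is sound---including the correct observations that $\norm{M}^\ell \leq \trace(M^\ell)$ for Hermitian $M$ and even $\ell$, that the degenerate case $Vx=0$ is trivial, and that the moment computation uses only $U^*U = I$ so complex $V$ is harmless---and deferring the delicate combinatorial core to \cite{nelson2013osnap} is exactly what the paper itself does, so no gap exists relative to the paper's treatment.
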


\section{Additional Experimental Results}

\Cref{tab:experiments_d_rank} shows the final $d$ selected for each method after adaptivity and the HSS rank, the rank of the largest off diagonal block as computed by the interpolative decomposition in the construction. Ideally, the difference between $d$ and the HSS rank should be less than $\Delta d = 64$ in our case meaning that the perfect amount of adaptive steps was taken. We observe that using Gaussian sketching operators and SJLT matrices with $\alpha = 2, 4 \text{ or } 8$ results in similar adaptive $d$ and HSS rank. When using SJLT matrices with $\alpha = 1$ the number of adaptive steps may be higher because the SJLT matrix is too sparse so new data about the original matrix is learned very slowly, requiring many more adaptive steps.

\begin{table}[h]
    \centering
    \resizebox{\textwidth}{!}{
    \begin{tabular}{|l|r|r|rrrrrr|rrrrrr|}
          \hline
                &     &    & \multicolumn{6}{c|}{Final $d$} & \multicolumn{6}{c|}{HSS rank}  \\ 
         Matrix & $\epsr$ & $n$ & G & S(1) & S(2) & S(4) & S(8) & H & G & S(1) & S(2) & S(4) & S(8) & H \\
         \hline
         \hline
         \multirow{9}{*}{Cov.}
         & \multirow{3}{*}{$10^{-2}$}
& $10^3$ &   128 &   128 &   128 &   128 &   128 & 128  &  97 &   102 &    96 &    97 &    97 & 97\\ 
& & $20^3$ &   256 &   256 &   256 &   256 &   256 & 256 & 180 &   179 &   175 &   167 &   159 & 179\\ 
& & $30^3$ &   384 &   384 &   320 &   384 &   384 & 384 & 247 &   253 &   221 &   248 &   235 & 239\\ 
        \cline{2-15}
            & \multirow{3}{*}{$10^{-4}$} 
& $10^3$ &   192 &   128 &   192 &   192 &   192 & 192 & 152 &   154 &   152 &   151 &   152 & 154\\ 
& & $20^3$ &   832 &   896 &   832 &   896 &   832 & 832 & 597 &   617 &   586 &   604 &   589 & 608\\ 
& & $30^3$ &  1984 &  2176 &  2112 &  2112 &  2112 & 2176 & 1472 &  1530 &  1520 &  1511 &  1470 & 1709\\ 
          \cline{2-15}
            & \multirow{3}{*}{$10^{-6}$}
& $10^3$ &   320 &   192 &   256 &   320 &   320 & 256 &  226 &   213 &   218 &   222 &   225 & 224\\ 
& & $20^3$ &  1088 &  1216 &  1216 &  1216 &  1280 & 1152 &  835 &   875 &   858 &   863 &   864 & 879\\ 
& & $30^3$ &  2816 &  2880 &  2880 &  2880 &  2880 &  3008 &  2128 &  2072 &  2079 &  2047 &  2073 & 2426\\ 
         \hline
         \hline
         \multirow{9}{*}{\shortstack[l]{QChem\\ Toeplitz}}
                 & \multirow{3}{*}{$10^{-2}$} 
& 10K &   128 &   128 &   128 &   128 &   128 & 128 &  11 &    10 &    10 &    10 &    11 & 10 \\ 
& & 20K &   128 &   128 &   128 &   128 &   128 & 128 &   13 &    13 &    12 &    12 &    11 & 12 \\ 
& & 40K &   128 &   128 &   128 &   128 &   128 &  128 &  12 &    13 &    12 &    12 &    13 & 11\\ 
        \cline{2-15}
            & \multirow{3}{*}{$10^{-4}$} 
& 10K &   128 &   128 &   128 &   128 &   128 &  128 &  18 &    20 &    17 &    17 &    16 & 17 \\ 
& & 20K &   128 &   128 &   128 &   128 &   128 & 128 &   18 &    19 &    20 &    18 &    20 & 19\\ 
& & 40K &   128 &   128 &   128 &   128 &   128 & 128 &   21 &    28 &    23 &    23 &    21 & 22\\ 
        \cline{2-15}
            & \multirow{3}{*}{$10^{-6}$} 
& 10K &   128 &   128 &   128 &   128 &   128 &  128 & 25 &    27 &    24 &    25 &    24 & 25 \\ 
& & 20K &   128 &   128 &   128 &   128 &   128 &  128 &  29 &    31 &    29 &    29 &    29 & 30\\ 
& & 40K &   128 &   128 &   128 &   128 &   128 & 128 &  36 &    40 &    37 &    35 &    35 & 34\\ 
         \hline
         \hline
         \multirow{9}{*}{\shortstack[l]{Scatt.\\ wave}}
         & \multirow{3}{*}{$10^{-2}$} 
& 5K &   192 &   192 &   192 &   192 &   192 &  576&  137 &   137 &   137 &   137 &   137 & 138 \\ 
& & 10K &   320 &   320 &   320 &   320 &   320 & 576 &  266 &   266 &   266 &   266 &   265 & 266\\ 
& & 20K &   576 &   576 &   576 &   576 &   576 & 576 &  523 &   523 &   524 &   523 &   524 & 522\\ 
        \cline{2-15}
       & \multirow{3}{*}{$10^{-4}$}  
& 5K &   192 &   192 &   192 &   192 &   192 &  576 &  146 &   147 &   145 &   145 &   144 & 146\\ 
& & 10K &   320 &   320 &   320 &   320 &   320 & 576 &  275 &   275 &   274 &   275 &   275 & 275\\ 
& & 20K &   640 &   640 &   640 &   640 &   640 & 576 &  538 &   538 &   535 &   536 &   538 & 529\\ 
        \cline{2-15}
       & \multirow{3}{*}{$10^{-6}$}
& 5K &   192 &   192 &   192 &   192 &   192 & 576 &  153 &   151 &   149 &   151 &   151 &  147\\ 
& & 10K &   320 &   320 &   320 &   320 &   320 & 576 &  284 &   284 &   281 &   282 &   284 & 275\\ 
& & 20K &   576 &   640 &   640 &   640 &   640 & 576 &  550 &   563 &   558 &   559 &   563 & 529\\ 
         \hline
         \hline
         \multirow{9}{*}{\shortstack[l]{3D\\ Poisson\\ front}}
         & \multirow{3}{*}{$10^{-2}$} 
& $100^2$ &   192 &  448  &  192 &   192 & 192   & 1856 &  158 &   350 &   159 &   156 &   156 & 168\\ 
& & $150^2$ &   384 &  1088 &   448 &   384 &   384 & 1856 &  245 & 916 & 295 & 247 & 241 & 253 \\
& & $200^2$ &   448 &  1536 &   704 &   512 &  512 & 1856 &  317 & 1333 & 414 & 320 & 318 & 335 \\
                 \cline{2-15}
       & \multirow{3}{*}{$10^{-4}$}  
& $100^2$ &   384 &   768 &   448 &  448  &  448 & 1856 &  282 &  601 &  294 &  278 &   276 & 279 \\ 
& & $150^2$ &   768 &   1536 &   832 &   832 &   832 & 1856 &  460 & 1188 & 526 & 505 & 496 & 430 \\
& & $200^2$ &   1088 &   2496 &   1280 &  1216 &  1216 & 1856 &  662 & 1936 & 800 & 766 & 762 & 569 \\

                 \cline{2-15}
       & \multirow{3}{*}{$10^{-6}$}
& $100^2$ &   576 &  896  &   640 &  576  &   640 & 1856 &  365 & 644 & 392 & 364 & 367 & 374 \\
& & $150^2$ &   1088 &  1856 &  1216 & 1152 &  1152 & 1856 &  645 & 1381 & 764 & 711 & 702 & 574\\
& & $200^2$ &  1536 &  2816 &  1728 & 1664 &  1728 & 1856 &   946 & 2093 & 1070 & 1019 & 1018 & 765 \\
         \hline
    \end{tabular}
    }
    \caption{Final $d$ and HSS rank for problems in \cref{tab:experiments}.}
    \label{tab:experiments_d_rank}
\end{table}

\Cref{tab:experiments_ranks_mpi} shows the final $d$ selected for each method after adaptivity and the HSS rank, the rank of the largest off diagonal block as computed by the interpolative decomposition in the construction for the parallel distributed experiments. Similarly to the above table, the difference between $d$ and the HSS rank should be less than $\Delta d = 256$ in our case meaning that the perfect amount of adaptive steps was taken. We observe that using Gaussian sketching operators and SJLT matrices with with $\alpha = 1$ the number of adaptive steps may be higher because the SJLT matrix is too sparse, requiring many more adaptive steps. While using SJLT with  $\alpha = 2, 4, \text{or}, 8$ yields similar results to the Gaussian matrices.

\begin{table}[ht]
    \centering
    \resizebox{\textwidth}{!}{
    \begin{tabular}{|l|r|r|rrrrr|rrrrr|}
          \hline
                &     &            & \multicolumn{5}{c|}{Final $d$} & \multicolumn{5}{c|}{HSS rank} \\ 
         Matrix & MPI size & $n$ & G & S(1) & S(2) & S(4) & S(8) & G & S(1) & S(2) & S(4) & S(8) \\
         \hline
         \hline
         \multirow{9}{*}{\shortstack[l]{Cov.}}
    & \multirow{3}{*}{8} & $20^3$ & 640 & 640 & 640 & 896 & 896 & 523 & 585 & 527 & 528 & 534 \\
    &   & $30^3$ & 1664 & 1920 & 1920 & 1920 & 1664 & 1204 & 1325 & 1212 & 1214 & 1166 \\
    &   & $35^3$ & 2944 & 3456 & 2944 & 3200 & 2944 & 2091 & 2275 & 2090 & 2013 & 2000 \\
        \cline{2-13}
    & \multirow{3}{*}{16} & $20^3$ & 640 & 640 & 640 & 896 & 896 & 523 & 585 & 527 & 528 & 534 \\
    &   & $30^3$ & 1664 & 1920 & 1920 & 1920 & 1664 & 1182 & 1325 & 1212 & 1214 & 1166 \\
    &   & $35^3$ & 2944 & 3456 & 2944 & 3200 & 2944 & 2077 & 2275 & 2090 & 2013 & 2000 \\
        \cline{2-13}
        
    & \multirow{3}{*}{32} & $20^3$ & 640 & 896 & 640 & 896 & 896 & 527 & 656 & 538 & 541 & 540 \\
    &   & $30^3$ & 1664 & 1920 & 1920 & 1920 & 1664 & 1196 & 1325 & 1212 & 1214 & 1166 \\
    &   & $35^3$ & 2688 & 3456 & 2944 & 3200 & 2944 & 1979 & 2275 & 2090 & 2013 & 2000 \\
        \cline{2-13}
         \hline
         \hline
         \multirow{9}{*}{\shortstack[l]{QChem\\ Toeplitz}}
    & \multirow{3}{*}{8} & 25000 & 512 & 512 & 512 & 512 & 512 & 24 & 23 & 20 & 19 & 21 \\
    &   & 50000 & 512 & 512 & 512 & 512 & 512 & 20 & 21 & 20 & 20 & 20 \\
    &   & 100000 & 512 & 512 & 512 & 512 & 512 & 25 & 27 & 25 & 26 & 25 \\
        \cline{2-13}
    & \multirow{3}{*}{16} & 25000 & 512 & 512 & 512 & 512 & 512 & 22 & 23 & 20 & 19 & 21 \\
    &   & 50000 & 512 & 512 & 512 & 512 & 512 & 20 & 21 & 20 & 20 & 20 \\
    &   & 100000 & 512 & 512 & 512 & 512 & 512 & 24 & 27 & 25 & 26 & 25 \\
        \cline{2-13}
    & \multirow{3}{*}{32} & 25000 & 512 & 512 & 512 & 512 & 512 & 22 & 23 & 20 & 19 & 21 \\
    &   & 50000 & 512 & 512 & 512 & 512 & 512 & 20 & 21 & 20 & 20 & 20 \\
    &   & 100000 & 512 & 512 & 512 & 512 & 512 & 24 & 27 & 25 & 26 & 25 \\
        \cline{2-13}
         \hline
          \hline
         \multirow{9}{*}{\shortstack[l]{3D\\Poisson\\front}}
    & \multirow{3}{*}{8} & $100^2$ & 512 & 512 & 512 & 512 & 512 & 278 & 349 & 279 & 279 & 279 \\
    &   & $150^2$ & 512 & 768 & 512 & 512 & 512 & 424 & 709 & 426 & 422 & 423 \\
    &   & $200^2$ & 768 & 1280 & 512 & 768 & 768 & 573 & 1119 & 564 & 569 & 569 \\
        \cline{2-13}
    & \multirow{3}{*}{16} & $100^2$ & 512 & 512 & 512 & 512 & 512 & 277 & 349 & 279 & 279 & 279 \\
    &   & $150^2$ & 512 & 768 & 512 & 512 & 512 & 425 & 709 & 426 & 422 & 423 \\
    &   & $200^2$ & 768 & 1280 & 512 & 768 & 768 & 576 & 1119 & 564 & 569 & 569 \\
        \cline{2-13}
    & \multirow{3}{*}{32} & $100^2$ & 512 & 512 & 512 & 512 & 512 & 278 & 349 & 279 & 279 & 279 \\
    &   & $150^2$ & 512 & 768 & 512 & 512 & 512 & 424 & 709 & 426 & 422 & 423 \\
    &   & $200^2$ & 768 & 1280 & 512 & 768 & 768 & 573 & 1119 & 564 & 569 & 569 \\
        \cline{2-13}
         \hline
    \end{tabular}
    }
    \caption{Final $d$ and HSS rank for problems in \cref{subsec:hss_mpi}. $G$ refers to sketching with a Gaussian sketching operator, $S(\alpha)$ to sketching with an SJLT matrix (block construction) with $\alpha$ nonzeros per row.}
    \label{tab:experiments_ranks_mpi}
\end{table}
\FloatBarrier
\section{HSS Algorithm Detailed Description}\label{appendix:algodescription}

\begin{algorithm2e}[H]
  \DontPrintSemicolon
  \SetAlgoLined
  \SetKwProg{Fn}{function}{}{}
  \SetKwFunction{compressadaptive}{HSSCompressAdaptive}
  \SetKwFunction{compressnodeadaptive}{CompressNodeAdaptive}
  \SetKwFunction{localsamples}{ComputeLocalSamples}
  \SetKwFunction{reducelocalsamples}{ReduceLocalSamples}
  \SetKwFunction{isleaf}{isleaf}
  \SetKwFunction{root}{root}
  \SetKwFunction{isroot}{isroot}
  \SetKwFunction{jlsketch}{JL-Operator}
  \SetKwFunction{child}{children}
  \SetKwFunction{level}{level}
  \SetKwFunction{cols}{cols}
  \SetKwFunction{ID}{ID}
  \SetKwFunction{local}{local}
  \SetKwFunction{rsinc}{RS-\inc}
  \SetKwBlock{Try}{try}{}
  \SetKwBlock{Fail}{catch}{}
  \SetKwData{UNTOUCHED}{\texttt{UNTOUCHED}}
  \SetKwData{COMPRESSED}{\texttt{COMPRESSED}}
  \SetKwData{PARTIALLYCOMPRESSED}{\texttt{PARTIALLY\_COMPRESSED}}
  \SetKwData{state}{state}
  \SetKwData{start}{start}
  \SetKw{goto}{goto}
  \SetKw{continue}{continue}
  \SetKwFunction{QR}{QR}
  \SetKw{break}{break}
  \SetKwFunction{minv}{min}
  \SetKwFunction{diag}{diag}

  \Fn{$H =$ \compressadaptive{$A$, $\mathcal{T}$, $d_0$, $\Delta d$}}{
    $d \gets d_0$; \quad $n \gets \cols(A)$ \\
    $R \gets $ \jlsketch{$d + \Delta d, n$} \\
    $S \gets AR$ \\
    \lForEach{$\tau \in \mathcal{T}$}{$\tau.\state \gets \UNTOUCHED$}
    \While{\root{$\mathcal{T}$}$.\state \neq \COMPRESSED$ {\bf and} $d < d_{\textup{max}}$}{
    \ForEach{$\tau \in \mathcal{T}$ {\bf in topological order}}{
      \uIf{$\tau.\state = \UNTOUCHED$}{
        \lIf{\isleaf{$\tau$}}{
           $D_\tau \gets A(I_\tau, I_\tau)$
        }\uElse{
          $\nu_1, \nu_2 \gets$ \child{$\tau$} \\
          $B_\tau \gets A(\widetilde{I}_{\nu_1}, \widetilde{I}_{\nu_2})$
        }
        $\iota \gets 1:d+\Delta d$
      }\lElse{
        $\iota \gets d+1:d+\Delta d$
      }
      \uIf{\isroot{$\tau$}}{
        $\tau.\state \gets \COMPRESSED$ \\
        \break
      }
      \lIf{\isleaf{$\tau$}}{
        $S_\tau(\,:\,, \iota) \gets S(I_\tau, \iota) - D_\tau \,\, R(I_\tau, \iota)$
      }\uElse{
        $S_\tau(\,:\,, \iota) \gets \begin{bmatrix} S_{\nu_1}(J_{\nu_1}, \iota) - B_{\tau} \,\, R_{\nu_2}( \, :\,, \iota) \\
        S_{\nu_2}(J_{\nu_2}, \iota) - B_{\tau}^* \,\, R_{\nu_1}( \, :\,, \iota) \end{bmatrix}$
      }
      \uIf{$\tau.\state \neq \COMPRESSED$}{
        \uIf{$\tau.\state = \UNTOUCHED$}{
          $\{Q_{\tau}, \Omega_{\tau} \} \gets$ \QR{$S_{\tau}(\, :\,, 1:d)$}
        }
        $\widetilde{S} \gets S_{\tau}(\, :\,, d + 1 :d+\Delta d)$ \tcp*{last $\Delta d$ columns}
        $\widehat{S} \gets (I - Q_\tau Q_\tau^*) \widetilde{S}$  \label{line:block_Gram_Schmidt} \\
        $\epsa^\tau \gets \epsa/\level(\tau)$; \quad
        $\epsr^\tau \gets \epsr/\level(\tau)$ \\
        \uIf(\tcp*[f]{Eq.~\ref{eq:stopping_criteria-1}}){$\|\widehat{S}\|_{F} < \epsa^\tau$ {\bf or} $\|\widehat{S}\|_{F} < \epsr^\tau\|\widetilde{S}\|_{F}$}{
        $\goto$ line~\ref{line::incrementing_RRQR}  
        }
        $\{\widehat{Q}, \, \widehat{\Omega}\} \gets$ \QR{$\widehat{S}$} \label{line:inc_QR} \\
        $Q_\tau \gets \begin{bmatrix} Q_\tau & \widehat{Q} \end{bmatrix}$ \\
        \uIf(\tcp*[f]{}){\minv{\diag{$|\widehat{\Omega}|$}}$ < \epsa^\tau$ {\bf or} \minv{\diag{$|\widehat{\Omega}|$}}$ < \epsr^\tau |(\Omega_\tau)_{11}|$} {
          $\{ U_\tau^*,\,\, J_\tau \} \gets$ \ID{$S_\tau^*$, $\epsr^{\tau}$, $\epsa^{\tau}$} \label{line::incrementing_RRQR}\\
          $\tau.\state \gets \COMPRESSED$
        }\uElse{ 
           $\bar{R} \gets$ \jlsketch{$\Delta d, n$}  \tcp*{extending sketch}
           $d \gets d + \Delta d$; \quad $S \gets \begin{bmatrix} S & A \bar{R} \end{bmatrix}$; \quad $R \gets \begin{bmatrix} R & \bar{R} \end{bmatrix}$ \\
           $\tau.\state \gets \PARTIALLYCOMPRESSED$ \\
           \break
        }
      }
      \uIf{\isleaf{$\tau$}}{
        $R_\tau( \, :\,, \iota) \gets U_\tau^* \,\, R(I_\tau\,,\iota)$; \quad
        $\widetilde{I}_\tau \gets I_\tau(J_\tau)$
      }\uElse{
        $R_\tau( \, :\,, \iota) \gets U_\tau^* \begin{bmatrix} R_{\nu_1}( \, :\,, \iota) \\ R_{\nu_2}( \, :\,, \iota) \end{bmatrix}$; \quad
        $\widetilde{I}_\tau \gets \begin{bmatrix} I_{\nu_1} & I_{\nu_2} \end{bmatrix}(J_\tau)$
      }
    } 
    } 
    \Return $\mathcal{T}$
  }
  \caption{Adaptive HSS compression of $A \in \mathbb{C}^{n \times n}$ using cluster tree $\mathcal{T}$ with relative and absolute tolerances $\epsr$ and $\epsa$ respectively, see \Cref{tab::addfunctions} for helper function details.}
  \label{algo::HSScompressNodeAdaptive}
\end{algorithm2e}


\begin{table}[!h]
  \begin{center}
    \begin{tabular}[c]{r|l}
      \hline
      \texttt{cols($A$)}            & number of columns in matrix $A$ \\
      \texttt{JL-Operator($d, n$)}    & a $d \times N$ matrix drawn from a JL Distribution \\
      \texttt{isleaf($\tau$)}   & \texttt{true} if $\tau$ is a leaf node, \texttt{false} otherwise \\
      \texttt{children($\tau$)} & a list with the children of node $\tau$, always zero or two \\
       \texttt{isroot($\tau$)}   & \texttt{true} if $\tau$ is a root node, \texttt{false} otherwise \\
       \texttt{$\{Q, \Omega\} \gets$ QR($S$)} & $S = Q\Omega$ where $Q$ is orthogonal, $\Omega$ is upper triangular \\
      \texttt{level($\tau$)} & level of node $\tau$, starting from $0$ at the root \\
      \texttt{$\{Y, J\} \gets$ ID($S, \varepsilon_r, \varepsilon_a$)} & interpolative decomposition: $S \approx S(:, J) Y$ \\
      \hline
    \end{tabular}
  \end{center}
  \caption{List of helper functions for \Cref{algo::HSScompressNodeAdaptive}.}
  \label{tab::addfunctions}
\end{table}
\FloatBarrier

Here we describe the steps to compress a symmetric HSS matrix $A$ with dimensions $4k \times 4k$ and HSS rank $r \ll k$ represented by a three level HSS tree shown in \cref{fig:hssthreetree} using \cref{algo::HSScompressNodeAdaptive}. Assume that $R$ has dimensions $4k \times l_1$. Initially, we compute $S = AR$ which has dimensions $4k \times l_1$.

We begin at the leaf level of the HSS tree where we can compress nodes one through four in parallel. We will compress the first node, corresponding to the first Hankel row block, whose rows we have highlighted in \cref{fig:hssleaf}. By symmetry this also corresponds to the columns of the first Hankel column block.   

\begin{figure}
    \centering
    \includegraphics[clip, trim=3cm 4cm 3cm 4cm, width=1.00\textwidth]{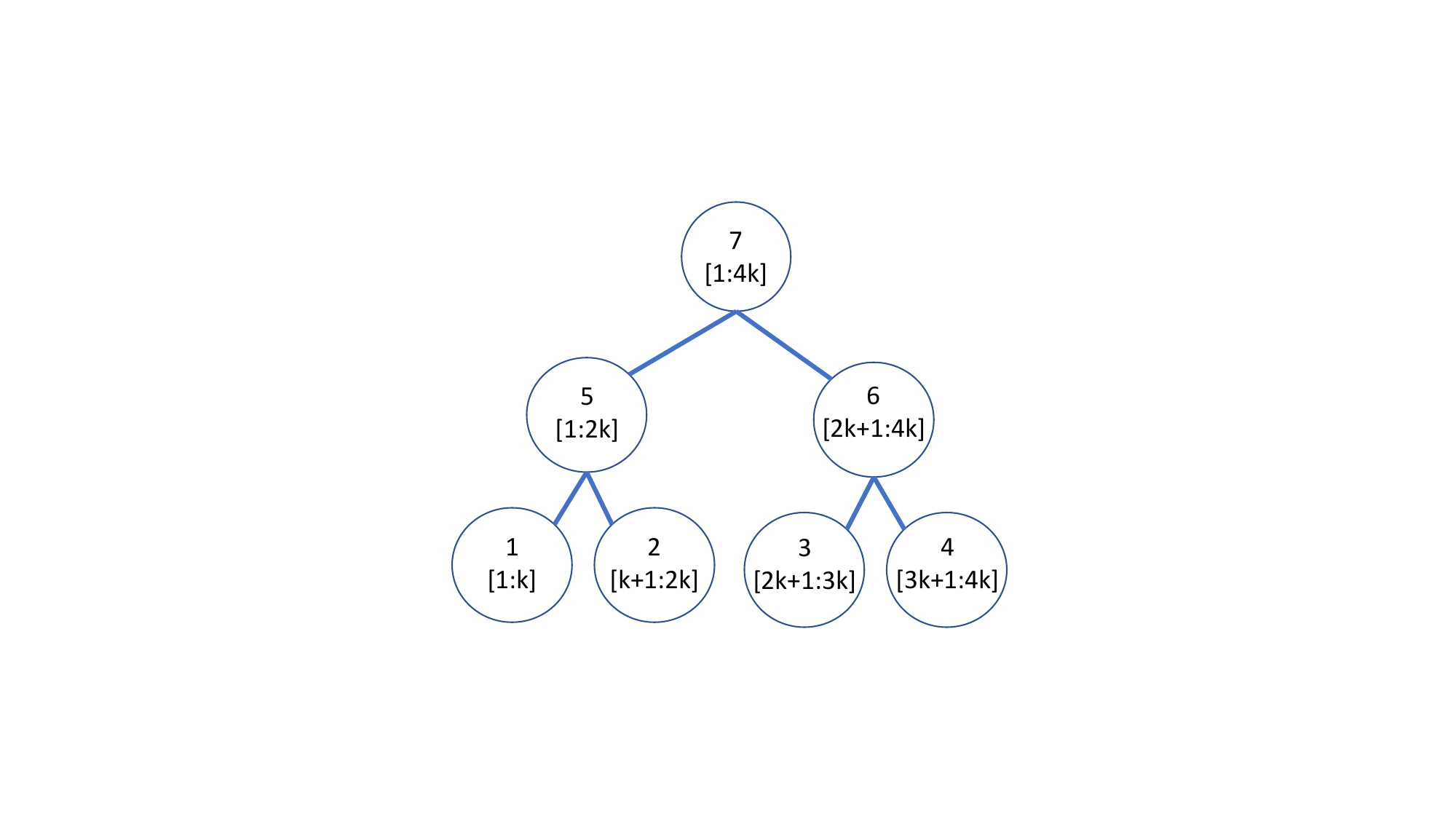}
    \caption{Three level HSS tree for our compression example with the nodes labeled and the corresponding indices in brackets.}
    \label{fig:hssthreetree}
\end{figure}

\subsection{Compression of a Leaf Node}

\begin{figure}
    \centering
    \includegraphics[clip, trim=3cm .5cm 3cm 1cm, width=1.00\textwidth]{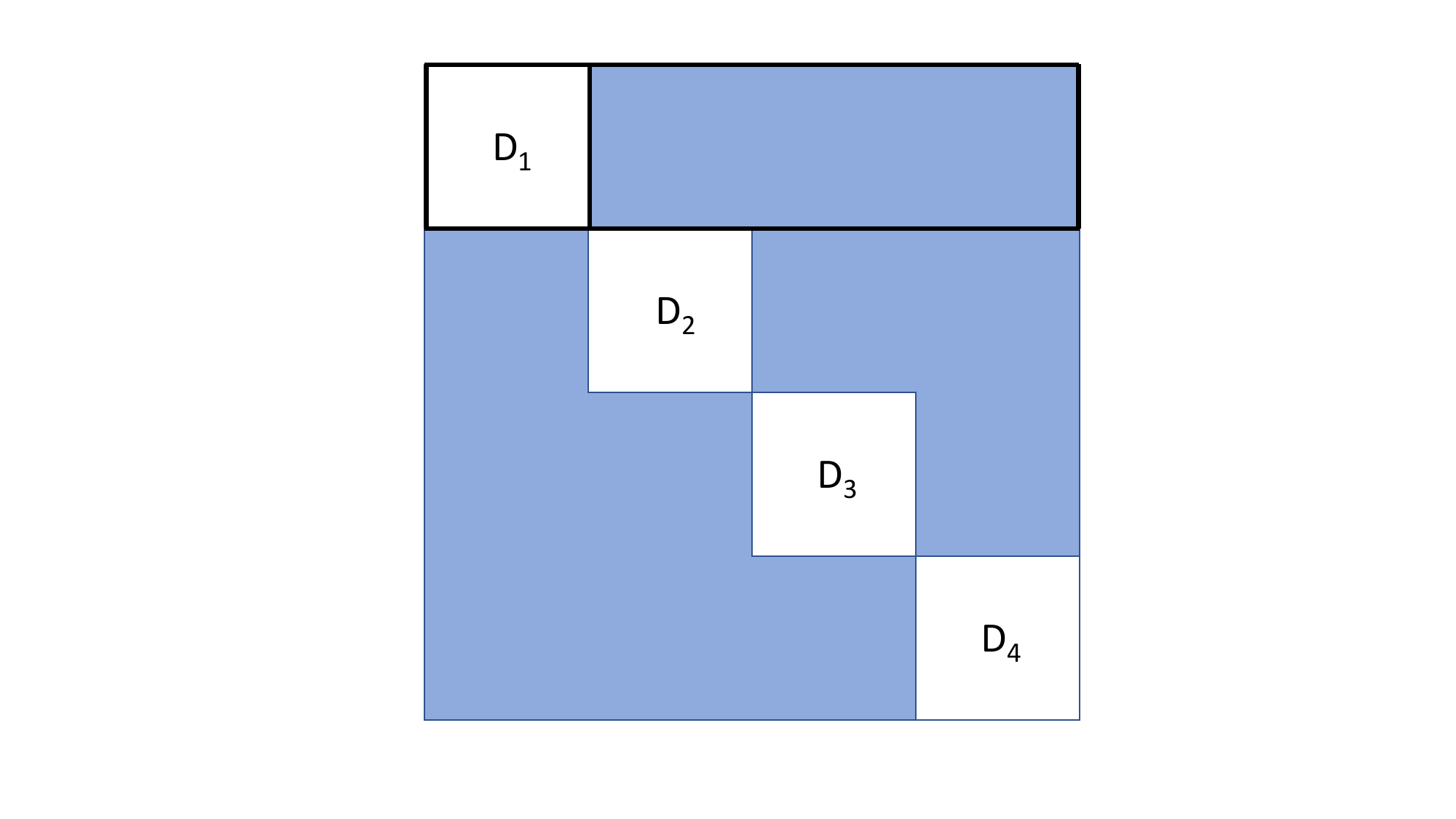}
    \caption{Leaf level of HSS tree with the first node rows in a box.}
    \label{fig:hssleaf}
\end{figure}

First, we store the dense diagonal matrix $D_1$ in our leaf node $1$ this is line 9 of the algorithm. Next, since we do not have the matrix $A$ but instead just the sketch $S = AR$ we must figure out what the local sketch of the Hankel row block $H_1 = A(1:k,1:4k \setminus 1:k ) = A(1:k,k+1:4k)$ is (the first $k$ rows excluding the dense diagonal). We compute a sketch of our Hankel row block $S^1_{\text{loc}} = [0, H_1] R$ by writing $[0, H_1] R = ([D_1, H_1] - [D_1, 0]) R = (A(1:k,:) - [D_1, 0]) R = S(1:k,:) - D_1 R(1:k,:)$ which is line 18 of \cref{algo::HSScompressNodeAdaptive}. 

Next, to compress our approximation of $H_1$ which is $S_1^{\text{loc}}$ with dimensions $k \times l_1$ lines 21-31 of \cref{algo::HSScompressNodeAdaptive} verify that $S_1^{\text{loc}}$ is a good enough approximation of $H_1$. For now, we will assume that it is and skip these lines. Later we will see how if the sketch is not accurate enough, we extend the sketching operator $R$ (lines 35-38) by appending columns to it which will require a small modification to the local sketches. We compute an interpolative decomposition of  $S_1^{\text{loc}}$ on line 32 of \cref{algo::HSScompressNodeAdaptive} such that  $S_1^{\text{loc}} \approx U_1 S_1^{\text{loc}}(J_1,:)$ where $U_1$ has dimensions $k \times r$ and $J_1$ is a subset of $r$ distinct indices in $[1:k]$. Then we set the state of node one to compressed (line 33). The interpolative decomposition cleverly gives us a low rank factorization for all of $H_1$ where $U_1$ could be thought of as a basis for the Hankel block and $J_1$ is an index set of rows which define the block. Since $S_1^{\text{loc}} = [0, H_1] R \approx U_1 S_1^{\text{loc}}(J_1,:) = U_1 [0, H_1](J_1,:) R$ and $R$ is full column rank with high probability we have that $[0, H_1] \approx  U_1 [0, H_1](J_1,:)$. So we have found a low rank factorization for the Hankel row block which we display in \cref{fig:hsscompressedH1}. 

\begin{figure}
    \centering
    \includegraphics[clip, trim=3cm 6cm 3cm 1cm, width=1.00\textwidth]{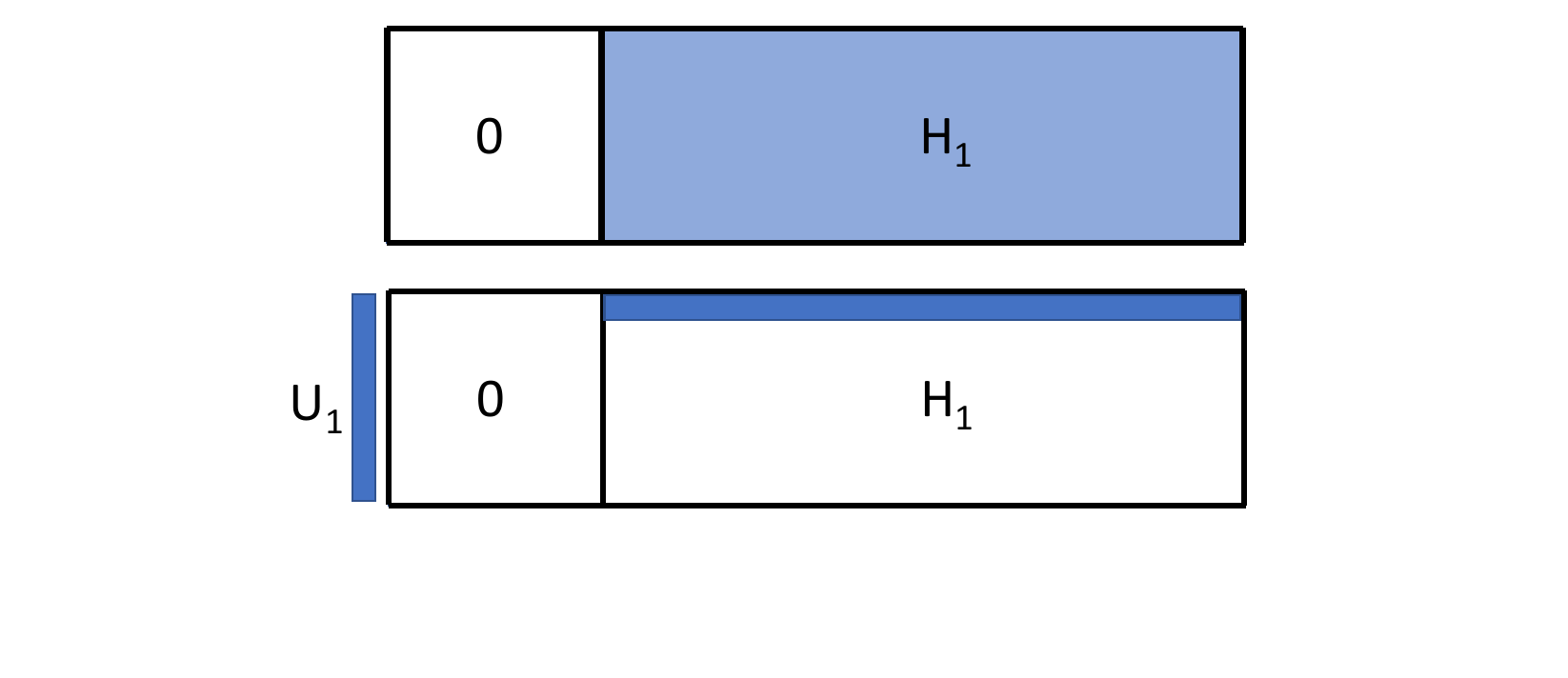}
    \caption{Compression of the first Hankel block $H_1$ into $U_1$, a basis matrix, and $r$ rows of the original Hankel block, denoted by the thin horizontal stripe (not necessarily the first $r$ rows) and indexed by index set $J_1$.}
    \label{fig:hsscompressedH1}
\end{figure}

We can now repeat this process for the rest of the leaf nodes which would result in matrices $U_2,U_3,U_4$ (dimensions $k \times r$) and index sets $J_2,J_3,J_4$ (of size $r$) being computed and stored. For the non-symmetric case we would also compress all of the leaf nodes for the column Hankel blocks as well. We display the result in \cref{fig:hssleavescompressed} where we additionally denote the low rank blocks $L_1$--$L_4$ which we would like to have compressed. 

\begin{remark}
The Hankel block does not need to be a contiguous nonzero block, for example $H_2 = [A(k+1:2k,1:k), 0, A(k+1:2k,2k+1:4k)]$ because $D_2$ is subtracted to compute $H_2$.
\end{remark}

\begin{figure}
    \centering
    \includegraphics[clip, trim=3cm .5cm 3cm 1cm, width=1.00\textwidth]{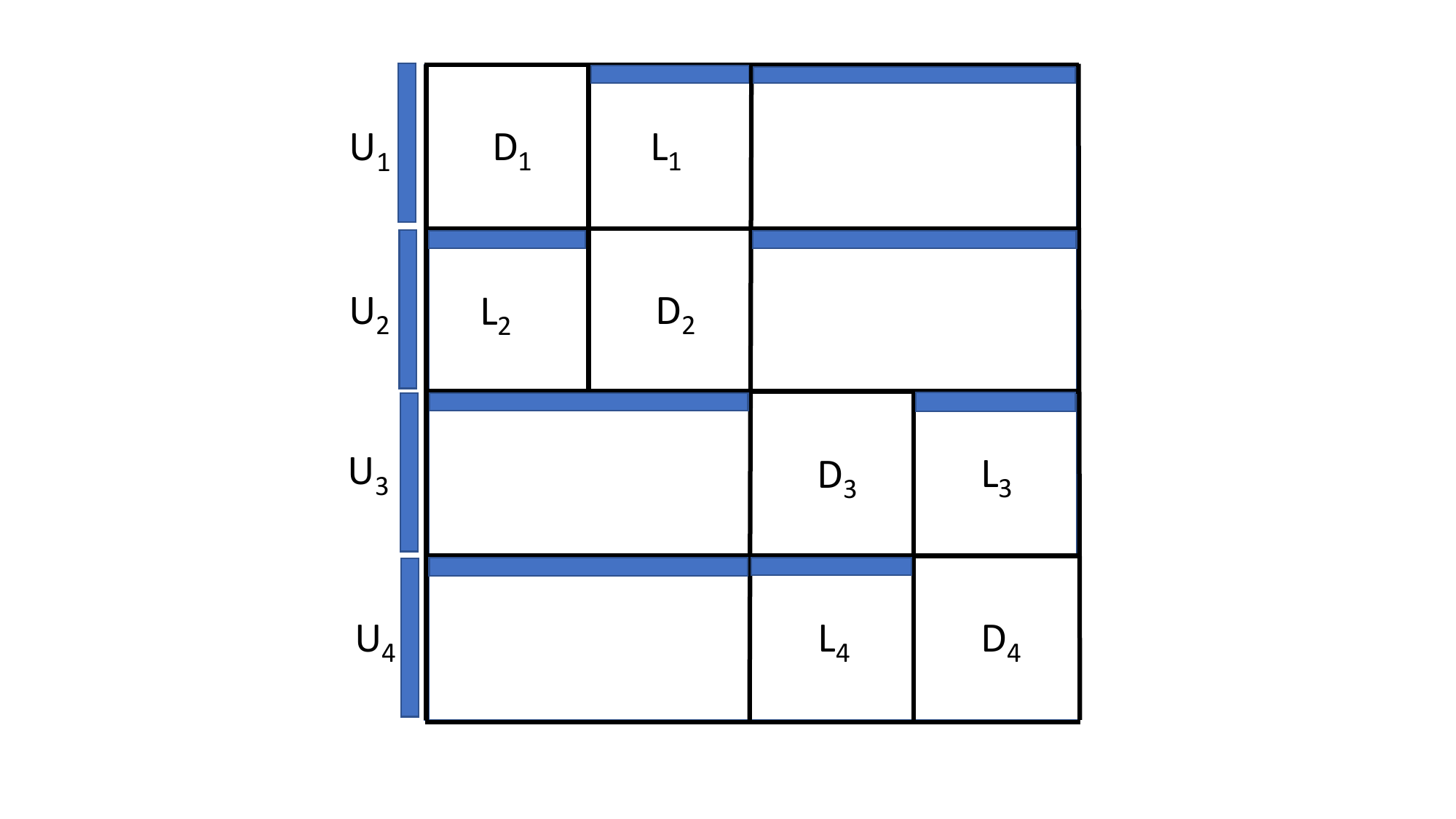}
    \caption{HSS matrix after all four row leaves have been compressed with the low rank blocks, $L_1$--$L_4$ sections listed .}
    \label{fig:hssleavescompressed}
\end{figure}
 
Next, We show that we have already computed a low rank factorization for $L_1$--$L_4$ based on the interpolative decompositions of both the row and column of the two Hankel blocks that intersect at the low rank block. We detail how to compress $L_1$ in \cref{fig:hsslowrank1compressed}. Since we have a row basis for $H_1$ we can just take the indices of the rows that intersect with $L_1$. So we have the factorization $L_1 \approx U_1 A(J_1,k+1:2k)$. Similarly, we have basis for the column Hankel block $H_2^T$ which intersected with $L_1$ because we assumed that our matrix $A$ was symmetric. So the column factorization for $L_1$ is the conjugate transpose of the row factorization for $L_2$ which we have already computed. Thus $L_1 \approx A(1:k,J_2) U^*_2$ we can rename $U_2^*$ as $V_2$ for clarity in the non-symmetric case where the second column Hankel block does not correspond to the conjugate transpose of the second row Hankel block. Combining the row and column factorizations, we have the low rank factorization $L_1 \approx  U_1 A(J_1,J_2) U_2^* = U_1 A(J_1,J_2) V_2$. Notice that we currently do not have $A(J_1,J_2)$, the small $r \times r$ matrix of entries of $A$. This will be queried and stored in the parent node in the next level of the algorithm (line 12 in \cref{algo::HSScompressNodeAdaptive}). For completeness we can factorize $L_2 \approx U_2 A(J_2,J_1) U_1^*,\; L_3 \approx U_3 A(J_3,J_4) U_4^*$ and $L_4 \approx U_4 A(J_4,J_3) U_3^*$.    

\begin{figure}
    \centering
    \includegraphics[clip, trim=3cm .5cm 3cm .5cm, width=1.00\textwidth]{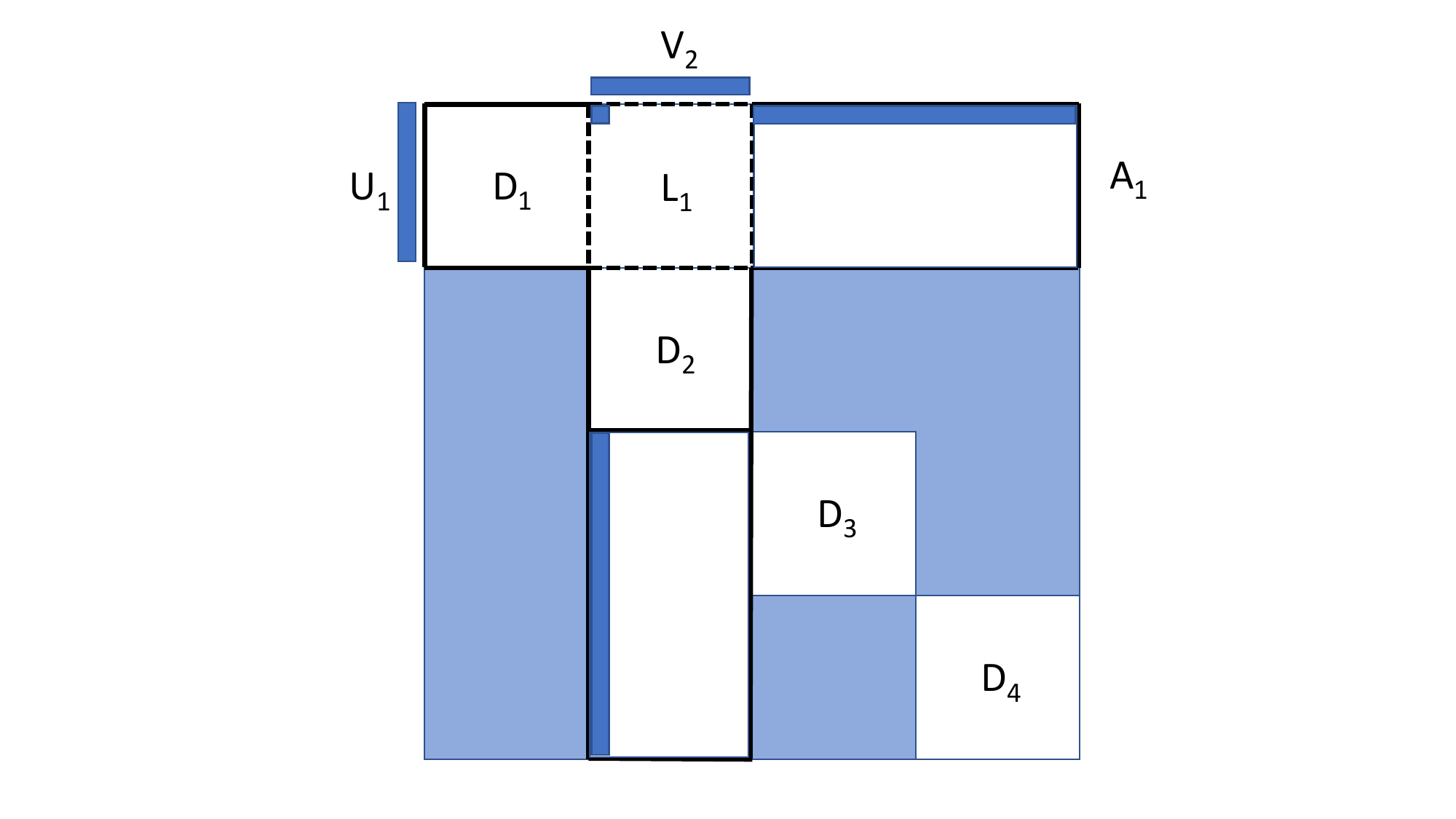}
    \caption{HSS matrix illustration of how the off diagonal low rank block $L_1$ is computed and stored.}
    \label{fig:hsslowrank1compressed}
\end{figure}

The final step that occurs at each leaf node is to compute $R^{\text{loc}}_i$ which corresponds to the sketching operator $R$ in the local column basis for the low rank block we have compressed. This will allow us to re-use the computation from our leaf nodes and subtract off the already compressed low rank blocks when trying to compress the parent nodes. Additionally, this allows us to leverage the nested basis property. So for the first leaf node, we compute and store $R_1^{\text{loc}} = U_1^* R(1:k,:)$. 

We have completed our compression for the first node, we store five variables: $1.\; D_1, \; 2.\; U_1, 3. \; J_1$ which is the dense diagonal block and what we use to represent the Hankel row block for rows $[1:k]$ and part of the low rank factorization for $L_1$ and we store $4. \; S_1^{\text{loc}}, 5. \; R_1^{\text{loc}}$ which we use to represent the sketch for the Hankel row block and the sketching operator for the Hankel row block in the column basis of $L_1$ which we use for the computation of the parent node.

\subsection{Compression of Internal Node}
We move on to compressing the second level of the HSS tree whose Hankel row blocks are shown in \cref{fig:parentHankel}. Before we describe the compression of $H_5$, we explain the \textbf{nested basis property} which all internal (non-leaf, non-root) nodes in the HSS tree use. This property explains the hierarchical in HSS matrices.

The nested basis property states that for a non-leaf Hankel block, $H_5$ with children nodes $H_1,\; H_2$ we can write a row (or column) basis $U^{\text{big}}_5$ of dimension $2k \times r$ as a product of the bases of $U^{\text{big}}_1,\; U^\text{big}_2$ (dimensions $k \times r$) of  $H_1,\; H_2$ respectively and a small matrix $U_5$ of dimension $2r \times r$: 

\begin{equation*}
    U^{\text{big}}_5 = \left[ \begin{array}{cc}
        U_1 & 0 \\
        0 & U_2
    \end{array}\right] U_5.
\end{equation*}

\begin{remark}
    For leaf node $i$, $U_i = U_i^{\text{big}}$.
\end{remark}

The intuition behind this property is that by constructing a basis $U^{\text{big}}_1$ for the first $k$ rows and  $U^{\text{big}}_2$ for the next $k$ rows, when we want to construct a basis $U^{\text{big}}_5$ for the $2k$ rows we should be able to use the basis information from our earlier constructions. When constructing HSS matrices we assume that this property holds.

Now that we have the nested basis property we can explain how this reduces the computation for the compression for node 5 (and any internal node) in \cref{algo::HSScompressNodeAdaptive}. We would like to have a sketch of $H_5$ depicted in \cref{fig:parentHankel} and compute $U_5$, of dimension $2r \times r$ . If we consider the matrix  $\left[ \begin{array}{c}
        S_1^{\text{loc}}\\
        S_2^{\text{loc}} 
    \end{array}\right]$ 
then we have an approximation for the block depicted in the top of \cref{fig:parent1precompress} because when we computed $S_1^{\text{loc}}$ and $S_2^{\text{loc}}$ we subtracted the diagonal blocks $D_1$ and $D_2$ respectively.  

\begin{figure}
    \centering
    \includegraphics[clip, trim=3cm .5cm 3cm .5cm, width=1.00\textwidth]{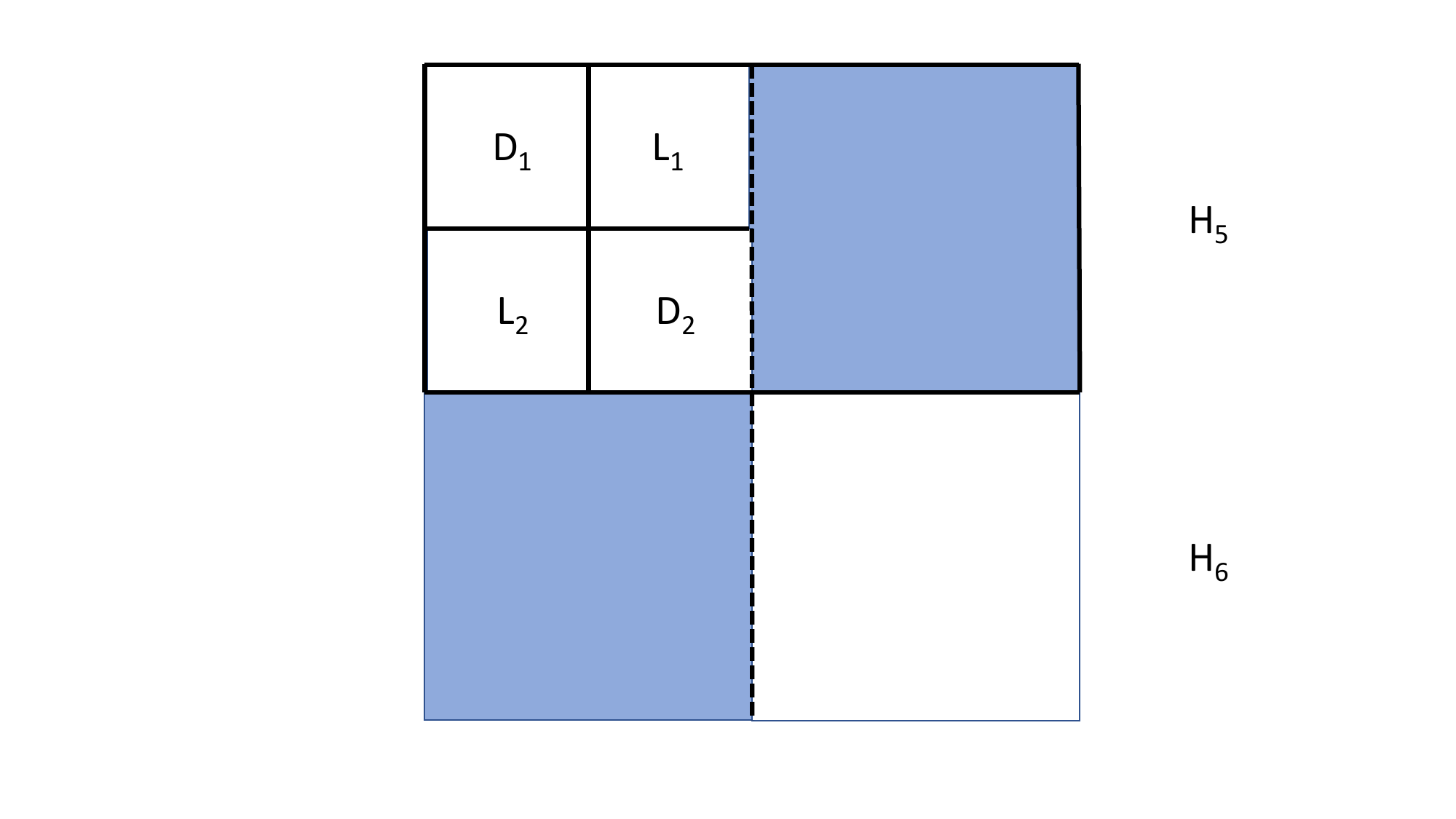}
    \caption{HSS matrix with the second level of row Hankel blocks highlighted in blue.}
    \label{fig:parentHankel}
\end{figure}

\begin{figure}
    \centering
    \includegraphics[clip, trim=3cm .5cm 3cm .5cm, width=1.00\textwidth]{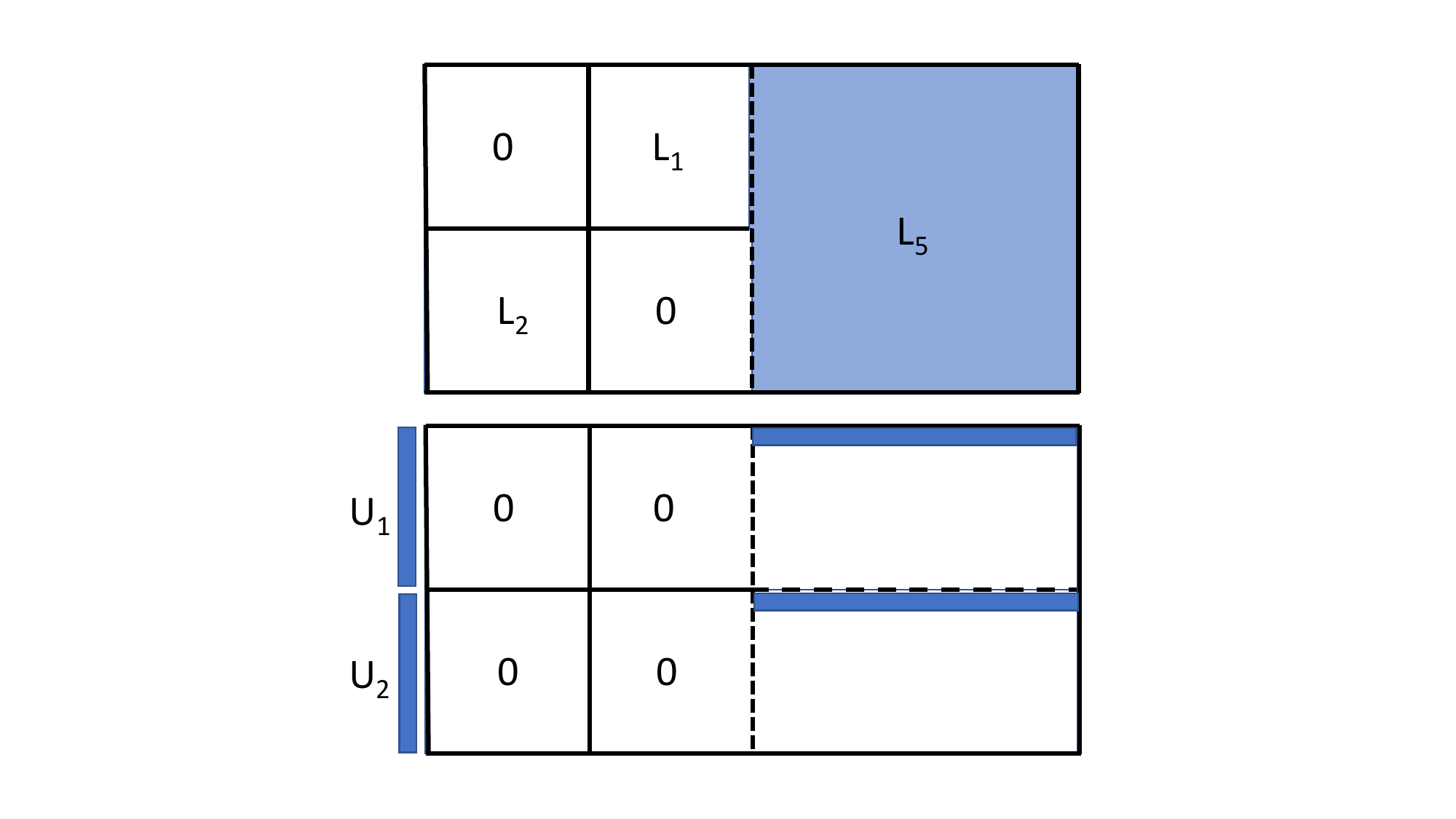}
    \caption{Node 5 row Hankel block being prepared for compression.}
    \label{fig:parent1precompress}
\end{figure}

We show how we use the nested basis property and information from the children nodes to compute a local sketch of $H_5$. We can subtract our compression of the low dimension blocks $L_1, \; L_2$ which we computed in the children nodes.

\begin{align}
    S_5 \nonumber & = \left( \left[ \begin{array}{ccc}
       0 & 0 & H_5(1:k,:)\\
       0 & 0 & H_5(k+1:2k,:)
    \end{array}\right] \right) R = 
    \left( A(1:2k,:) - \left[ \begin{array}{ccc}
        D_1 & L_1 & 0\\
        L_2 & D_2 & 0
    \end{array}\right] \right) R \\ &=  A(1:2k,:) R - \left[ \begin{array}{cc}
        D_1 & L_1\\
        L_2 & D_2 
    \end{array}\right] \left[ \begin{array}{c}
        R(1:k, :) \\
        R(k+1:2k, :)
    \end{array}\right] \nonumber \\ &= \nonumber
     \left[ \begin{array}{c}
       S_1^{\text{loc}}\\
        S_2^{\text{loc}} 
    \end{array}\right]  - \left[ \begin{array}{cc}
       0 & L_1\\
        L_2 & 0
    \end{array}\right] \left[ \begin{array}{c}
        R(1:k, :) \\
        R(k+1:2k, :)
    \end{array}\right] \\ & \approx \nonumber
    \left[ \begin{array}{cc}
       U_1^{\text{big}} & 0\\
        0 & U_2^{\text{big}} 
    \end{array}\right]   \left[ \begin{array}{c}
       S_1^{\text{loc}}(J_1,:)\\
        S_2^{\text{loc}}(J_2,:)
    \end{array}\right] - \left[ \begin{array}{c}
       U_1^{\text{big}} A(J_1,J_2)V_2^{\text{big}}R(k+1:2k, :)\\
       U_2^{\text{big}} A(J_2,J_1) V_1^{\text{big}} R(1:k, :)
    \end{array}\right] \\ &= \label{eq:localSketch}  
     \left[ \begin{array}{cc}
       U_1^{\text{big}} & 0\\
        0 & U_2^{\text{big}} 
    \end{array}\right]  \left( \left[ \begin{array}{c}
       S_1^{\text{loc}}(J_1,:)\\
        S_2^{\text{loc}}(J_2,:)
    \end{array}\right] - \left[ \begin{array}{c}
       A(J_1,J_2) R_2^{\text{loc}}\\
        A(J_2,J_1) R_1^{\text{loc}}
    \end{array}\right]\right) \\ & := \nonumber
    \left[ \begin{array}{cc}
       U_1^{\text{big}} & 0\\
        0 & U_2^{\text{big}}  
    \end{array}\right] S_5^{\text{loc}}
\end{align}
Since HSS matrices satisfy the nested basis property to compute a row basis for node 5 we use $S_5^{\text{loc}}$ which has dimensions $2r \times l_1$ and contains the nested basis prefactor seen in the second to last row of the above computation which generalizes to any internal HSS tree node. $S_5^{\text{loc}}$ corresponds to a sketch of the two dark blue horizontal strips in the bottom of \cref{fig:parent1precompress} and only requires information already computed in the children nodes. 

We go through the steps of compressing $H_5$ using \cref{algo::HSScompressNodeAdaptive}.  First, since node 5 is the parent node of nodes 1 and 2, it stores the small sub-blocks of $A$ used to compute $L_1$ and $L_2$ which in this case is $A(J_1,J_2)$ and $A(J_2,J_1)$, by symmetry only storing the $r \times r$ matrix $A(J_1,J_2)$ is required, line 12 of \cref{algo::HSScompressNodeAdaptive}. Then on line 20 of \cref{algo::HSScompressNodeAdaptive} a local sketch $S_5^{\text{loc}}$ as in \cref{eq:localSketch} is computed using the sub-blocks of $A$ that we just stored and the information in the children nodes. We then check if the local sketch, $S_5^{\text{loc}}$, is sufficient to approximate $H_5$ and adaptively increase the size of the sketching operator in lines 21-31 and lines 35-38. We discuss how this adaptation is done in the following section. Assuming that  $S_5^{\text{loc}}$ is sufficiently accurate, on line 32 of \cref{algo::HSScompressNodeAdaptive} we compute our basis $U_5$ and row indices $J_5$ in the nested basis defined by $U_1$ and $U_2$. Finally, on line 42 of \cref{algo::HSScompressNodeAdaptive} we compute a local sketching operator, $R_5^{\text{loc}}$, in the basis of $U_5$ which we will use to subtract the block which we have compressed in higher levels of the tree. So we have computed and stored: $1.\; A(J_1,J_2), \; 2. \; S_5^{\text{loc}},\; 3. \; U_5,\; 4. \; J_5,$ and $ 5. \; R_5^{\text{loc}}$ which are the five components that define an internal node. 

We can similarly compress $H_6$ which would now give us all the information to compress $L_5$ and $L_6$ by symmetry then move up to the root node. 
\begin{remark}
    When compressing the root node we do not do any compression but instead store the two $r \times r$ blocks of $A$ ($A(J_5,J_6)$ and $A(J_6,J_5)$ here) that are required to compute the low rank factorization for the two largest low rank off diagonal blocks ($L_5$ and $L_6$ here).
\end{remark}

\subsection{Adaptation}
At each non-root node of the HSS tree we verify that the sketch of our current node, $S_i^{\text{loc}}$, is sufficiently accurate before we compress it. If $S_i^{\text{loc}}$ is sufficiently accurate, which is checked by the computation and stopping criteria on lines 21-31 of \cref{algo::HSScompressNodeAdaptive} then we can compress node $i$, otherwise we increase the size of our global sketching operator and global sketch on lines 35 and 36 (from $l_1$ to $l_1+\Delta d$ in our example). We then mark the state of the current node, $i$, as partially compressed and restart our compression loop for all of the nodes. 

For the compressed nodes we will update their local sketches and sketching operators to have $l_1+\Delta d$ instead of just $l_1$ columns. This operation is computed in \cref{algo::HSScompressNodeAdaptive} as follows. First on line 14 we set the columns we will be modifying as the final $\Delta d$ that we added to the global sketch and sketching operator in line 36. Then on lines 18-20 we update the local sketch information, finally on lines 39-42 the local sketching operators are updated.   

For the one partially compressed node we will update the sketching operator as for the compressed nodes but we will also check the stopping criteria on lines 27 and 31. If either is met then node $i$ can now be compressed and the algorithm can continue. Otherwise, lines 35-37 will trigger again, expanding the global sketch and sketching operator then marking node $j$ as partially compressed again. Finally, for uncompressed nodes we do not need to update anything, we will use the updated sketching operator and sketches. For a detailed discussion of why we use the stopping criteria on lines 27 and 31 we refer the reader to \cref{sec:adaptivestoppingcrit}.

\end{document}